\newtheorem{theorem}{Theorem}[section]
\newtheorem*{thmA1}{Theorem A1}
\newtheorem*{thmA2}{Theorem A2}
\newtheorem*{thmA3}{Theorem A3}
\newtheorem*{rem}{Remark}
\newtheorem{lemma}[theorem]{Lemma}
\newtheorem{fact}[theorem]{Fact}
\newtheorem{proposition}[theorem]{Proposition}
\newtheorem{corollary}[theorem]{Corollary}
\theoremstyle{definition}
\newtheorem{definition}[theorem]{Definition}
\theoremstyle{remark}
\newtheorem{remark}[theorem]{Remark}
\newtheorem*{noteadded}{Note Added in Proof}
\newcommand{\sgn}{\operatorname{sgn}}
\numberwithin{equation}{section}
\begin{document}

\title[Deforming a hypersurface by Gauss curvature and support function]
 {Deforming a hypersurface by Gauss curvature and support function}

\author[M.N. Ivaki]{Mohammad N. Ivaki}
\address{Institut f\"{u}r Diskrete Mathematik und Geometrie, Technische Universit\"{a}t Wien,
Wiedner Hauptstr. 8--10, 1040 Wien, Austria}
\curraddr{}
\email{mohammad.ivaki@tuwien.ac.at}

\dedicatory{}
\subjclass[2010]{Primary 53C44, 52A05; Secondary 35K55}
\keywords{}

\begin{abstract}
We study the motion of smooth, strictly convex bodies in $\mathbb{R}^n$ expanding in the direction of their normal vector field with speed depending on Gauss curvature and support function.
\end{abstract}
\maketitle
\section{Introduction}
The setting of this paper is $n$-dimensional Euclidean space, $\mathbb{R}^{n}$. A compact convex subset of $\mathbb{R}^{n}$ with non-empty interior is called a \emph{convex body}. The set of convex bodies in $\mathbb{R}^{n}$ is denoted by $\mathcal{K}^n$. Write $\mathcal{K}^n_{e}$ for the set of origin-symmetric convex bodies and $\mathcal{K}^n_{0}$ for the set of convex bodies whose interiors contain the origin. Also write $\mathcal{F}^n$, $\mathcal{F}^n_0$, and $\mathcal{F}^n_e$, respectively, for the set of smooth ($ C^{\infty}$-smooth), strictly convex bodies in $\mathcal{K}^n$, $\mathcal{K}^n_{0}$, and $\mathcal{K}^n_{e}$.

The unit ball of $\mathbb{R}^n$ is denoted by $B$ and its boundary is denoted by $\mathbb{S}^{n-1}$. We write $\nu:\partial K\to \mathbb{S}^{n-1}$ for the Gauss map of $\partial K$, the boundary of $K\in\mathcal{F}^n$. That is, at each point $x\in\partial K$, $\nu(x)$ is the unit outwards normal at $x$.

Assume that $\varphi$ is a positive, smooth function on $\mathbb{S}^{n-1}$. Let $F_0:M\to \mathbb{R}^n$ be a smooth parametrization of $\partial K_0$ where $K_0\in\mathcal{F}_0^n.$ In this paper, among other things, we study the long-time behavior of a family of convex bodies $\{K_t\}\subset \mathcal{F}_0^n$ given by smooth maps $F:M\times [0,T)\to \mathbb{R}^n$ that satisfies the initial value problem
\begin{equation}\label{e: flow0}
 \partial_{t}F(x,t)=\varphi(\nu(x,t))\frac{(F(x,t)\cdot \nu (x,t))^{2-p}}{\mathcal{K}(x,t)}\, \nu(x,t),~~
 F(\cdot,0)=F_{0}(\cdot).
\end{equation}
Here $F(M,t)=\partial K_t,$ and $\mathcal{K}(\cdot,t)$ is the Gauss curvature of $F(M,t).$ Moreover, $T$ is the maximal time for which the solution exists.

The support function of $K\in\mathcal{F}^n_0$ as a function on the unit sphere is defined by
\[h_K(u):= \nu^{-1}(u)\cdot u\]
for each $u\in\mathbb{S}^{n-1}$. All information about the hypersurface, except for parametrization, is contained
in the  support function. It easy to see that as $\{K_t\}$ moves according to (\ref{e: flow0}), then $h:\mathbb{S}^{n-1}\times [0,T)\to \mathbb{R},~ h(\cdot,t):=h_{K_t}(\cdot)$ evolves by
\begin{equation}\label{eq: flow4}
\partial_th(u,t)=\varphi(u)\frac{h^{2-p}}{\mathcal{K}}(u,t).
\end{equation}
A self-similar solution of this flow satisfies
\begin{align} \label{def: self similar}
h^{1-p} \det (\bar{\nabla}^2 h + \operatorname{Id}h)=\frac{c}{\varphi},
\end{align}
for some positive constant $c.$ Here $\bar{\nabla}$ is the covariant derivative on $\mathbb{S}^{n-1}$ endowed with an orthonormal frame.

When $p=2,~\varphi\equiv1$, flow (\ref{eq: flow4}), among other flows, was studied by Schn\"{u}rer \cite{Oliver 2006} in $\mathbb{R}^3,$ and by Gerhardt \cite{Gerhardt 2014} in higher dimensions. Both works rely on the reflection principle of Chow and Gulliver \cite{Bennett Chow and Robert Gulliver 1996}, and McCoy \cite{James A. McCoy 2003}. Their result is as follows: the normalized flow evolves any smooth strictly convex body in the $ C^{\infty}$-topology to an origin-centered ball. When $p=-n,~\varphi\equiv1$, the flow is a member of a family of flows, $p$-centro affine normal flows, which was introduced by Stancu \cite{Alina 2012}. In $\mathbb{R}^{2}$ and for $p=-2,~\varphi\equiv1$, a ``dual" flow to (\ref{eq: flow4}) (see Lemma \ref{lem: ev polar}) was studied by the author \cite{Ivaki 2013} with an application to the stability of the Busemann-Petty centroid inequality in the plane. For $p>2,~\varphi\equiv1$ and in $\mathbb{R}^n$, it follows from Chow-Gulliver \cite[Theorem 3.1]{Bennett Chow and Robert Gulliver 1996} (see also Tsai \cite[Example 1]{Tsai 2005}) that (\ref{eq: flow4}) evolves any smooth strictly convex body in the $ C^{1}$-topology to an origin-centered ball. See also Chow-Tsai \cite{Chow-Tsai 1996, Chow-Tsai 1997,Chow-Tsai 1998} for discussion of the expansion of convex hypersurfaces by non-homogeneous functions of principal curvatures and Gauss curvature. Moreover, in $\mathbb{R}^2$ the following theorems can be obtained by using Andrews' results. Let us set $\tilde{K}_t:=\left(V(B)/V(K_t)\right)^{1/n}K_t.$
\begin{thmA1}
Let $-2\leq p<\infty,~p\neq 1,~\varphi\equiv1$ and assume that $K_0\in\mathcal{F}_0^2$ satisfies $\int_{\mathbb{S}^{1}}\frac{u}{h_{K_0}(u)^{1-p}}d\sigma(u)=0$. There exists a unique solution $\{K_t\}\subset \mathcal{F}_0^2 $ of flow (\ref{eq: flow4}) such that $\{\tilde{K}_t\}$ converges in the $ C^{\infty}$-topology to the unit disk if $p>-2$ and to an origin-centered ellipse if $p=-2$.
\end{thmA1}
\begin{thmA2}
Let $-2< p<\infty,~p\neq 1.$ Let $\varphi$ be a positive, smooth, even function on $\mathbb{S}^1$ i.e., $\varphi(u)=\varphi(-u)$. Assume that $K_0\in\mathcal{F}_e^2$. There exists a unique solution $\{K_t\}\subset \mathcal{F}_e^2 $ of flow (\ref{eq: flow4}) such that $\{\tilde{K}_t\}$ converges in the $ C^{\infty}$-topology to an origin-symmetric strictly convex, smooth solution of (\ref{def: self similar}).
\end{thmA2}
\begin{thmA3}
Let $-2< p\leq -1,$ and  $K_0\in\mathcal{F}_0^2$ satisfy $\int_{\mathbb{S}^{1}}\frac{u}{\varphi(u)h_{K_0}(u)^{1-p}}d\sigma(u)=0$. Then there exists a unique solution $\{K_t\}\subset \mathcal{F}_0^2 $ of flow (\ref{eq: flow4}) such that $\{\tilde{K}_t\}$ converges in the $ C^{\infty}$-topology to a positive strictly convex, smooth solution of (\ref{def: self similar}).
\end{thmA3}
\begin{rem}
These theorems can be obtained from Andrews' results \cite{Andrews 1996, Andrews 1997,Andrews 1998,Andrews Ben 2000,Andrews 2003}: If $p<1$, then one needs Andrews' results about asymptotic behavior of shrinking flows by positive powers of curvature ($\partial_th=-\psi \mathcal{K}^{\frac{1}{1-p}}$), but when $p>1$ one needs Andrews' results on asymptotic behavior of expanding flows by negative powers of curvature ($\partial_th=\psi \mathcal{K}^{\frac{1}{1-p}}$). We observe that the evolution equation of $\psi \mathcal{K}^{\frac{1}{1-p}}$ in either case satisfies, up to a positive constant, (\ref{eq: flow4}) with $\varphi=\psi^{p-1}$. Existence of solutions to the Minkowski problem lets us reverse this procedure, provided $K_0$ satisfies the integral identity $\int_{\mathbb{S}^{1}}\frac{u}{\varphi h_{K_0}^{1-p}}d\sigma=0$. This argument is invalid if $n\ge3$ or $\int_{\mathbb{S}^{1}}\frac{u}{\varphi h_{K_0}^{1-p}}d\sigma\neq 0.$ See also S. Angenent, J.J.L. Vel\'{a}zquez, Y.-C. Lin, T.-S. Lin, C.-C. Poon, and D.-H. Tsai \cite{Angenent 1991,Angenent-V, Tsai 2004,Lin-Chu-Tsai 2010,Lin-Chu-Tsai 2012,Poon-Tsai 2010,Poon-Tsai 2014} for several beautiful results about the blow-up behavior of immersed, smooth, convex, closed plane curves with rotation index $m\ge 1$ evolving by (\ref{eq: flow4}).
\end{rem}
\begin{theorem}\label{thm: 1}
Let $n\ge 3,$ $p=-n$ and $\varphi\equiv1$. Assume that $K_0\in\mathcal{F}_0^n$ has its Santal\'{o} point at the origin, e.q., $\int_{\mathbb{S}^{n-1}}\frac{u}{h_{K_0}(u)^{n+1}}d\sigma(u)=0$. Then there exists a unique solution $\{K_t\}\subset \mathcal{F}_0^n $ of flow (\ref{eq: flow4}) such that $\{\tilde{K}_t\}$ converges in the $ C^{\infty}$-topology to an origin-centered ellipsoid.
\end{theorem}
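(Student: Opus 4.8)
The plan is to treat \eqref{eq: flow4} with $p=-n$, $\varphi\equiv1$ as the quasilinear parabolic Monge--Amp\`ere equation $\partial_t h=h^{\,n+2}\det(\bar{\nabla}^2 h+\operatorname{Id}h)$ on $\mathbb{S}^{n-1}$, to exhibit a monotone affine-invariant functional whose critical bodies are precisely the origin-centered ellipsoids, to extract a priori estimates using the $SL(n)$-equivariance of the flow, and to conclude by a linearization argument at the limiting ellipsoid. Throughout write $f:=\det(\bar{\nabla}^2 h+\operatorname{Id}h)=1/\mathcal{K}$; the speed is $h^{\,n+2}f$, the flow is $SL(n)$-equivariant (so its shape evolves autonomously), and by \eqref{def: self similar} the self-similar solutions are exactly the origin-centered ellipsoids, i.e.\ the bodies with $\mathcal{K}/h^{\,n+1}$ constant. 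Short-time existence and uniqueness of a smooth solution on a maximal interval $[0,T)$ is standard. The first point is that the hypothesis persists: $\frac{d}{dt}\int_{\mathbb{S}^{n-1}}\frac{u}{h^{\,n+1}}\,d\sigma(u)=-(n+1)\int_{\mathbb{S}^{n-1}}u\,f\,d\sigma(u)=0$, because $\int_{\mathbb{S}^{n-1}}u\,\det(\bar{\nabla}^2 h+\operatorname{Id}h)\,d\sigma(u)$ vanishes for every convex body (the closing-up identity for the surface area measure). Hence the origin stays the Santal\'o point of $K_t$ --- in particular interior to $K_t$ --- for all $t$, so the Blaschke--Santal\'o inequality $V(K_t)V(K_t^{\circ})\le V(B)^2$ holds, with equality only for origin-centered ellipsoids.

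Next I would produce the monotone functional. From $V(K_t)=\frac1n\int_{\mathbb{S}^{n-1}}h\,f\,d\sigma$ and $V(K_t^{\circ})=\frac1n\int_{\mathbb{S}^{n-1}}h^{-n}\,d\sigma$ (using $\rho_{K_t^{\circ}}=1/h$) one gets $\frac{d}{dt}V(K_t)=\int_{\mathbb{S}^{n-1}}\frac{h^{\,n+2}}{\mathcal{K}^2}\,d\sigma$ and $\frac{d}{dt}V(K_t^{\circ})=-\int_{\mathbb{S}^{n-1}}\frac{h}{\mathcal{K}}\,d\sigma=-nV(K_t)$. Applying Cauchy--Schwarz to $h^{(n+2)/2}/\mathcal{K}$ and $h^{-n/2}$ yields $\big(\int_{\mathbb{S}^{n-1}}\tfrac{h}{\mathcal{K}}\,d\sigma\big)^2\le\big(\int_{\mathbb{S}^{n-1}}\tfrac{h^{\,n+2}}{\mathcal{K}^2}\,d\sigma\big)\big(\int_{\mathbb{S}^{n-1}}h^{-n}\,d\sigma\big)$, i.e.\ $\frac{d}{dt}V(K_t)\ge nV(K_t)^2/V(K_t^{\circ})$, so the Mahler volume $M(t):=V(K_t)V(K_t^{\circ})$ is non-decreasing, with $M'(t)=0$ exactly when $h^{\,n+1}/\mathcal{K}$ is constant on $\mathbb{S}^{n-1}$, i.e.\ $K_t$ is an origin-centered ellipsoid. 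As $M(0)>0$ and $M\le V(B)^2$, $M(t)$ increases to a limit $M_{\infty}\le V(B)^2$. (Equivalently one may pass to the dual flow of Lemma \ref{lem: ev polar}, for which $M$ records a monotone volume, and compare with the contracting centro-affine normal flow.)

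For the a priori estimates, normalize so that $V(\tilde K_t)=V(B)$; then $V(\tilde K_t^{\circ})=M(t)/V(B)$ stays bounded below by a positive constant, so $\tilde K_t$ cannot degenerate affinely. Using the $SL(n)$-equivariance (which preserves the vanishing Santal\'o point), together with the volume and Mahler bounds, the normalized bodies may be assumed to lie in a fixed compact subset of $\mathcal{F}^n_0$, i.e.\ $cB\subseteq\tilde K_t\subseteq CB$ for fixed $0<c<C$. On this class the equation is uniformly parabolic away from curvature degeneracies, and the technical core is the a priori bound $c\operatorname{Id}\le\bar{\nabla}^2 h+\operatorname{Id}h\le C\operatorname{Id}$ (uniform ellipticity of the Monge--Amp\`ere operator), obtained from maximum-principle estimates on the speed $h^{\,n+2}/\mathcal{K}$ and on a suitable auxiliary quantity built from it and its spatial derivatives, in the spirit of Tso, Chou--Wang and Andrews but adapted to this expanding, non-homogeneous flow. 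Krylov--Safonov and Schauder estimates then bootstrap these to uniform $C^{\infty}$ bounds on $\tilde K_t$, which a fortiori preclude finite-time blow-up, so $T=\infty$.

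Finally, the uniform bounds give $C^{\infty}$-subconvergence of the normalized bodies, and the monotonicity and boundedness of $M$ force the Cauchy--Schwarz deficit from the second step to $0$; with the uniform bounds this says $h^{\,n+1}/\mathcal{K}\to\mathrm{const}$ in $C^{\infty}$, so every subsequential limit is an origin-centered ellipsoid and $M_{\infty}=V(B)^2$. The remaining point, which I expect to be the hardest together with the curvature estimates, is to upgrade subconvergence to convergence to a \emph{single} ellipsoid: the ellipsoids form the nontrivial critical manifold $\cong SL(n)/SO(n)$ rather than an isolated critical point. For this I would linearize the normalized flow at the limiting ellipsoid, check that the linearized operator is non-positive with kernel equal to the tangent space of that manifold (the translational and dilational neutral modes having been removed by the Santal\'o normalization and the fixed volume), and then conclude either by a {\L}ojasiewicz--Simon inequality for $V(B)^2-M$ or by an ODE/contraction argument controlling the $SL(n)$-component of the solution against an exponentially decaying forcing term; interpolation with the uniform bounds upgrades this to $C^{\infty}$ convergence. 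Uniqueness of $\{K_t\}$ is uniqueness for the quasilinear parabolic Cauchy problem. The principal obstacles are thus the curvature estimates of the third step --- in particular the lower bound $\bar{\nabla}^2 h+\operatorname{Id}h\ge c\operatorname{Id}$, i.e.\ preservation of uniform convexity, for the normalized flow --- and the single-limit argument just described.
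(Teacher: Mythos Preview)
Two concrete gaps.

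\textbf{The lifespan is finite.} For $p=-n$ an origin-centered ball evolves by $\partial_t r=r^{2n+1}$, so $r(t)=\bigl(r(0)^{-2n}-2nt\bigr)^{-1/(2n)}$ reaches infinity in finite time; by comparison $T<\infty$ and in fact $V(K_t)\to\infty$ as $t\to T$ (Lemma~\ref{lem: time singularitya}, Proposition~\ref{cor: volume to infty}). Uniform estimates on the \emph{normalized} solution cannot prevent this. Once $T<\infty$, your step ``monotonicity and boundedness of $M$ force the Cauchy--Schwarz deficit to $0$'' is no longer automatic: from $\int_0^T M'(t)\,dt<\infty$ one cannot conclude $\liminf_{t\to T}M'(t)=0$. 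The paper closes this by introducing a second monotone quantity $\mathcal B_{-n}(K)=V(K)^{n-1}\mathcal A_{-n}(K)/V(\Lambda_{-n}K)^{n-1}$ built from Petty's curvature image $\Lambda_{-n}K$; identity~\eqref{eq: key asymp} gives $\frac{d}{dt}\frac{\log\mathcal B_{-n}}{1+n}\ge n\varepsilon\,\frac{d}{dt}\bigl(-\tfrac1n\log\int h^{-n}\bigr)$ whenever $V_1(K_t,\Lambda_{-n}K_t)/V(\Lambda_{-n}K_t)\ge1+\varepsilon$, and since the right-hand time-integral diverges (Lemma~\ref{lem: large ent}) while $\mathcal B_{-n}$ stays bounded, one extracts $t_i\to T$ along which the limit satisfies $K=\Lambda_{-n}K$, hence is an ellipsoid (Lemma~\ref{lem: key}). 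Your Mahler volume is exactly $\mathcal A_{-n}/n$, but $\mathcal A_{-n}$ alone is not enough here.

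\textbf{No a priori inner/outer-radius bound on $\tilde K_t$.} This is the serious structural issue specific to $p=-n$. The only entropy you have, $\mathcal A_{-n}=nM$, is $SL(n)$-invariant: every origin-centered ellipsoid of volume $\omega_n$ has $M=\omega_n^2$, so the volume and Mahler bounds impose no constraint whatsoever on the $SL(n)$-shape, and nothing in your argument rules out $\tilde K_t$ elongating without bound. John's lemma supplies time-dependent $l_t\in SL(n)$ with $cB\subseteq l_t\tilde K_t\subseteq CB$, but $\{l_tK_t\}_t$ is \emph{not} a solution of the flow, so you cannot run Tso-type or Krylov--Safonov estimates on it. This is exactly why the paper's route for $p=-n$ is different from $-n<p$ (where Corollary~\ref{cor: uniform lower and upper on support functions} does give $r\le h_{\tilde K_t}\le R$): it first uses the $\mathcal B_{-n}$ argument above to get a Hausdorff-limit ellipsoid after $SL(n)$-correction, deduces $M(K_t)\to\omega_n^2$, and then imports the $C^\infty$ subsequential convergence of $l_{t_k}\tilde K_{t_k}$ to the unit ball from the Mahler-pinching results of \cite{Ivaki 2013,Ivaki 2014}, rather than from uniform parabolic estimates on $\tilde K_t$. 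The linearization step you outline is indeed how the paper finishes.

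Finally, where two-sided bounds on $h$ \emph{are} available, the principal-curvature estimate (your ``technical core'') is not obtained by a direct maximum-principle argument on $h$ or on the speed; it is obtained by passing to the polar flow of Lemma~\ref{lem: ev polar} and bounding $S_1^\ast=\sum\lambda_i^\ast$ from above (Lemma~\ref{lem: upper and lower}), then transferring back via Theorem~\ref{thm: eigenvalue relation body and it polar}. The choice $p=-n$ is what makes this work: the weight $\rho=(h^{\ast2}+|\bar\nabla h^\ast|^2)^{(n+p)/2}h^{\ast-n}$ collapses to $h^{\ast-n}$, so $|\bar\nabla\rho|$ and $\bar\Delta\rho$ are controlled by $S_1^\ast$ itself and the cross terms can be absorbed. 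That polar-dual mechanism is the missing idea behind your ``in the spirit of Tso, Chou--Wang and Andrews''.
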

As a corollary of this theorem, we prove an inequality of Lutwak \cite{Lutwak 1986} (stronger than the Blaschke-Santal\'{o} inequality). See Theorem \ref{thm: Lutwak 1986} for the statement.

The next theorem fills in the gap $p=1$ in the statement of Theorem A2.
\begin{theorem}\label{thm: 3a}
Let $p=1$, $\varphi$ be a positive, smooth, even function on $\mathbb{S}^1$ and $K_0\in\mathcal{F}_e^2$. Then there exists a unique solution $\{K_t\}\subset \mathcal{F}_0^2 $ of flow (\ref{eq: flow4}) such that $\{\tilde{K}_t\}$ converges in the $ C^{\infty}$-topology to an origin-symmetric strictly convex, smooth solution of (\ref{def: self similar}).
\end{theorem}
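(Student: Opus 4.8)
The plan is to analyse the support-function flow~(\ref{eq: flow4}), which for $p=1$ is $\partial_t h=\varphi\,h\,(\bar\nabla^2h+h)$ and, in the plane, writing $\rho:=h_{\theta\theta}+h$ for the radius of curvature, reads $\partial_t h=\varphi h\rho$. What makes $p=1$ special --- it is precisely the value at which the power $\tfrac{1}{1-p}$ from the remark above breaks down, so Theorem~A2 does not cover it --- is that the self-similar equation~(\ref{def: self similar}) degenerates to the \emph{linear} equation $h_{\theta\theta}+h=c\,\varphi^{-1}$. Since $\varphi$ is $\pi$-periodic, $\varphi^{-1}$ has only even Fourier modes; hence for each $c>0$ this equation has a unique $\pi$-periodic solution, it has only even modes and so is the support function of an origin-symmetric body, and solvability of the planar Minkowski problem for the positive smooth datum $c\,\varphi^{-1}$ shows this body lies in $\mathcal F_e^2$. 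As its area scales like $c^2$, there is a \emph{unique} origin-symmetric self-similar body $K_\ast$ of area $\pi$; this uniqueness is what will ultimately force convergence. I would then record the routine facts: $h_0>0$ makes the equation uniformly parabolic near $t=0$, so there is a unique maximal smooth solution on $[0,T)$; differentiating the flow twice in $\theta$ gives
\[
\partial_t\rho=\varphi h\,\rho_{\theta\theta}+2(\varphi h)_\theta\,\rho_\theta+\bigl(\varphi h+(\varphi h)_{\theta\theta}\bigr)\rho,
\]
which is linear in $\rho$ with $\rho\equiv0$ a solution, so strict convexity is preserved; at a spatial minimum of $h$ one has $h_{\theta\theta}\ge0$, whence $\tfrac{d}{dt}\min_\theta h\ge(\min\varphi)(\min_\theta h)^2$ and $T<\infty$; and, by uniqueness, evenness of $\varphi$ and of $h_0$ makes $h(\cdot,t)$ $\pi$-periodic, so $K_t\in\mathcal F_e^2$ throughout.

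The engine of the proof is a scale-invariant Lyapunov functional. With $\Psi:=\int_{\mathbb S^1}\varphi^{-1}h\,d\theta$ and $A$ the enclosed area, the flow and the identity $A=\tfrac12\int h\rho\,d\theta$ give $\dot\Psi=\int h\rho\,d\theta=2A$ and $\dot A=\int\varphi h\rho^2\,d\theta$, and the Cauchy--Schwarz inequality $\bigl(\int h\rho\bigr)^2\le\bigl(\int\varphi^{-1}h\bigr)\bigl(\int\varphi h\rho^2\bigr)$ yields
\[
\frac{d}{dt}\,\frac{\Psi^2}{A}=\frac{\Psi}{A^2}\Bigl(4A^2-\Psi\!\int\varphi h\rho^2\,d\theta\Bigr)\le 0,
\]
with equality exactly on self-similar solutions; moreover $\Psi\ge(\max\varphi)^{-1}L$ and the isoperimetric inequality $L^2\ge4\pi A$ bound $\Psi^2/A$ below by $4\pi(\max\varphi)^{-2}$. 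Hence $\mathcal Z:=\Psi^2/A$ is non-increasing and convergent, and the total dissipation is finite. I would then pass to the normalisation $\hat K_\tau:=\tilde K_t$ reparametrised by $d\tau=(A(K_t)/\pi)^{1/2}dt$, so that $\hat K_\tau$ has area $\pi$, the normalised flow is $\partial_\tau\hat h=\varphi\hat h\hat\rho-\tfrac{\hat h}{2\pi}\!\int\varphi\hat h\hat\rho^2\,d\theta$ (with $\hat\rho:=\hat h_{\theta\theta}+\hat h$), $\tau\to\infty$ exactly as $t\to T$, and --- by scale-invariance of $\mathcal Z$ --- the dissipation becomes $\int_0^\infty\hat D\,d\tau<\infty$ for a nonnegative density $\hat D$ vanishing precisely on self-similar solutions. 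Note that the fixed points of the normalised flow are exactly the origin-symmetric self-similar bodies of area $\pi$, i.e.\ the single body $K_\ast$.

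Next come the a priori estimates. From $\mathcal Z(\hat K_\tau)=\mathcal Z(K_t)\le\mathcal Z(K_0)$ one bounds the perimeter of $\hat K_\tau$, hence (the body being origin-symmetric) its circumradius, and the fixed area then bounds its inradius below; this gives $0<c_1\le\hat h\le c_2$ and, since $\hat h^2+\hat h_\theta^2$ is the squared distance from the origin to $\partial\hat K_\tau$, also $|\hat h_\theta|\le c_2$. The crux --- and the step I expect to be \emph{the main obstacle} --- is to upgrade these to uniform two-sided curvature bounds $0<c_3\le\hat\rho\le c_4$: since the normal speed $\varphi h/\mathcal K$ is not a pure power of $\mathcal K$ but carries the extra $h$- and $u$-dependence, these cannot simply be quoted from the existing theory of expanding curvature flows, but the argument should parallel it. A useful reduction is that $\eta:=\varphi\hat\rho$ satisfies, along the normalised flow, an equation of the form
\[
\partial_\tau\eta=\varphi\hat h\,\eta_{\theta\theta}+b\,\eta_\theta+\eta\Bigl(\eta-\tfrac{1}{2\pi}\!\int\varphi\hat h\hat\rho^2\,d\theta\Bigr),
\]
with $b$ controlled by the $C^1$ bounds; from this, together with the $C^0$ and $C^1$ bounds, I would get the upper bound on $\mathcal K$ by a Tso-type maximum-principle argument (testing with a quantity such as $\mathcal K/(\hat h-c_1/2)$) and the lower bound on $\mathcal K$ --- i.e.\ the upper bound on $\hat\rho$ --- from the maximum principle for the $\eta$-equation (equivalently for the $\rho$-equation above) against a constant barrier. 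Once $\hat\rho$ is pinched between positive constants the normalised equation is uniformly parabolic on the one-dimensional domain $\mathbb S^1$, and Krylov--Safonov together with Schauder bootstrapping give uniform-in-$\tau$ $C^\infty$ bounds on $\hat h$; in particular the normalised flow exists for all $\tau\in[0,\infty)$ and the only degeneration of the original flow at $T$ is $h\to\infty$.

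Finally I would conclude convergence. The uniform $C^\infty$ bounds make $\{\hat K_\tau\}_{\tau\ge0}$ precompact in the $C^\infty$ topology; since $\int_0^\infty\hat D\,d\tau<\infty$ and $\hat D$ is uniformly Lipschitz in $\tau$ (again by the $C^\infty$ bounds), $\hat D(\tau)\to0$ as $\tau\to\infty$, so every $C^\infty$ subsequential limit of $\hat K_\tau$ satisfies $\hat D=0$, i.e.\ is an origin-symmetric, smooth, strictly convex self-similar body of area $\pi$. By the uniqueness established at the outset this limit is always $K_\ast$, so $\hat K_\tau\to K_\ast$ in $C^\infty$; equivalently $\tilde K_t\to K_\ast$ in $C^\infty$ as $t\to T$, which is the assertion. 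Linearising the flow at $K_\ast$ would in addition give an exponential rate, but that is not needed for the statement.
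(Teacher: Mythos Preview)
Your overall architecture coincides with the paper's: the functional $\mathcal Z=\Psi^2/A$ is precisely the reciprocal of the paper's $\mathcal A_1^\varphi$, and its monotonicity (Lemma~\ref{lem: monotonicity}) plus origin-symmetry give the uniform $C^0$ bounds on $\hat h$ exactly as in Corollary~\ref{cor: uniform lower and upper on support functions}; the endgame (Krylov--Safonov, Schauder, subsequential limits, uniqueness of the limit) is the scheme of Section~\ref{sec: before application}. Your Fourier-mode treatment of the self-similar equation $h_{\theta\theta}+h=c/\varphi$ is more explicit than the paper's one-line appeal to uniqueness and is a genuine plus.

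The gap is exactly where you flag it: the two-sided curvature bound. Your proposed upper bound on $\hat\rho$ via a constant barrier for $\eta=\varphi\hat\rho$ does not close. At a spatial maximum the reaction term is $\eta_{\max}(\eta_{\max}-C)$ with
\[
C=\tfrac1{2\pi}\!\int\varphi\hat h\hat\rho^{2}\,d\theta=\tfrac1{2\pi}\!\int\hat h\hat\rho\,\eta\,d\theta\le \tfrac{\eta_{\max}}{2\pi}\!\int\hat h\hat\rho\,d\theta=\eta_{\max},
\]
so $\eta_{\max}-C\ge 0$ always and no constant barrier is respected. The paper obtains this bound by a different device: it passes to the \emph{polar} flow (Lemma~\ref{lem: ev polar}), which is contracting, and applies Tso's trick to its speed to get $\mathcal K\ge 1/(a+bt^{-1/2})$ with $a,b$ depending only on the $C^0$ data (Lemma~\ref{lem: upper}); since this is independent of the initial body on $[t_0/2,t_0]$ it scales to a uniform lower bound for the normalized curvature. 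For the other direction, the quantity $\mathcal K/(\hat h-c_1/2)$ is not the natural Tso test function for an expanding flow, and the paper's own Lemma~\ref{lem: lower} only yields a $t_0$-dependent upper bound; instead the paper shows (Lemma~\ref{lem: high reg}) that $\chi:=|F|^{\,1+n-p}-\lambda\,\varphi\,(F\cdot\nu)^{2-p}/\mathcal K$ (for $n=2$, $p=1$ this is $|F|^2-\lambda\varphi h\rho$) stays negative for $\lambda$ large, which is scale-invariant and gives the uniform upper bound on the normalized $\mathcal K$ (Corollary after Lemma~\ref{lem: high reg}). Once you substitute these two ingredients for your sketched curvature arguments, the rest of your proof goes through as written.
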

Let $F_0:M\to \mathbb{R}^n$ be a smooth parametrization of $\partial K_0$ where $K_0\in\mathcal{F}_0^n.$ Consider convex bodies $\{K_t\}\subset \mathcal{F}_0^n$ given by the smooth embeddings $F:M\times [0,T)\to \mathbb{R}^n$ that solve the initial value problem
\begin{equation}\label{e: flow2}
 \partial_{t}F(x,t)=-\frac{\mathcal{K}(x,t)}{(F(x,t)\cdot \nu (x,t))^{n}}\, \nu(x,t),~~
 F(\cdot,0)=F_{0}(\cdot).
\end{equation}
Then, as $K_t$ moves according to (\ref{e: flow2}), $h:\mathbb{S}^{n-1}\times [0,T)\to \mathbb{R},~h(\cdot,t):=h_{K_t}(\cdot)$ evolves by
\begin{equation}\label{eq: flow3}
\partial_th(u,t)=-\frac{\mathcal{K}}{h^{n}}(u,t).
\end{equation}
This flow was introduced by Stancu \cite{Alina 2012} ($p$ centro-affine normal flows for $p=\infty$).
We will prove the following theorem about the asymptotic behavior of flow (\ref{eq: flow3}).
\begin{theorem}\label{thm: 3}
Assume that $K_0\in\mathcal{F}_0^n$ has its centroid at the origin. Then there exists a unique solution $\{K_t\}\subset \mathcal{F}_0^n$ of flow (\ref{eq: flow3}) such that $\{\tilde{K}_t\}$ converges in the $C^{\infty}$-topology to an origin-centered ellipsoid.
\end{theorem}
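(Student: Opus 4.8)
The plan is to recognize flow (\ref{eq: flow3}) as the polar dual of flow (\ref{eq: flow4}) with $p=-n$ and $\varphi\equiv1$, and to transport Theorem \ref{thm: 1} (and, when $n=2$, Theorem A1 with $p=-2$) through this duality. First I would translate the hypothesis: if $K_0\in\mathcal{F}_0^n$ has centroid at the origin, then, writing the volume integral in polar coordinates and using $\rho_{K_0}=1/h_{K_0^{*}}$,
\[
0=\int_{K_0}x\,dx=\frac{1}{n+1}\int_{\mathbb{S}^{n-1}}\rho_{K_0}(u)^{n+1}u\,d\sigma(u)=\frac{1}{n+1}\int_{\mathbb{S}^{n-1}}\frac{u}{h_{K_0^{*}}(u)^{n+1}}\,d\sigma(u).
\]
Thus $L_0:=K_0^{*}\in\mathcal{F}_0^n$ has its Santal\'{o} point at the origin, which is exactly the integral hypothesis of Theorem \ref{thm: 1} for $n\ge3$ and of Theorem A1 with $p=-2$ for $n=2$. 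Applying that result to $L_0$ yields a unique maximal solution $\{L_\tau\}_{\tau\in[0,T^{*})}\subset\mathcal{F}_0^n$ of (\ref{eq: flow4}) with $p=-n$, $\varphi\equiv1$, such that $\{\tilde L_\tau\}$ converges in the $C^\infty$-topology to an origin-centered ellipsoid $E$ of volume $V(B)$.

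Next I would invoke Lemma \ref{lem: ev polar}: up to a smooth strictly increasing time reparametrization $\tau\mapsto t(\tau)$ mapping $[0,T^{*})$ onto some interval $[0,T)$, the polar map $L\mapsto L^{*}$ turns $\{L_\tau\}$ into a family $\{K_t\}_{t\in[0,T)}\subset\mathcal{F}_0^n$, with $K_{t(\tau)}=L_\tau^{*}$, that solves (\ref{eq: flow3}) and satisfies $K_0=K_0^{**}=K_0$; this establishes existence. Since the polar map is an involution of $\mathcal{F}_0^n$ and the time change is a bijection of the two maximal intervals, Lemma \ref{lem: ev polar} gives in fact a bijective correspondence between solutions of (\ref{eq: flow3}) and solutions of (\ref{eq: flow4}) with $p=-n$, $\varphi\equiv1$; hence uniqueness of $\{K_t\}$ follows from the uniqueness in Theorem \ref{thm: 1} (resp. Theorem A1).

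For the asymptotics I would use that polarity $K\mapsto K^{*}$ is continuous --- indeed smooth on the set of convex bodies whose interiors contain a fixed ball about the origin --- that $(\lambda K)^{*}=\lambda^{-1}K^{*}$, and that it sends origin-centered ellipsoids to origin-centered ellipsoids. Because $\tilde L_\tau\to E$ in $C^\infty$ with $0\in\operatorname{int}E$, the bodies $\tilde L_\tau$ eventually contain a common ball about the origin, so $(\tilde L_\tau)^{*}\to E^{*}$ in $C^\infty$, with $E^{*}$ again an origin-centered ellipsoid. As $(\tilde L_\tau)^{*}$ is a positive dilate of $K_{t(\tau)}$ while $\tilde K_t$ is the unique volume-$V(B)$ dilate of $K_t$, we obtain
\[
\tilde K_{t(\tau)}=\left(\frac{V(B)}{V((\tilde L_\tau)^{*})}\right)^{1/n}(\tilde L_\tau)^{*}\ \longrightarrow\ \left(\frac{V(B)}{V(E^{*})}\right)^{1/n}E^{*},
\]
an origin-centered ellipsoid, in the $C^\infty$-topology; since $t(\tau)\to T$ as $\tau\to T^{*}$, it follows that $\tilde K_t$ converges in $C^\infty$ to an origin-centered ellipsoid as $t\to T$.

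The genuinely substantial ingredients here are Theorem \ref{thm: 1} and the precise statement of Lemma \ref{lem: ev polar}; granting these, the only points needing care are the identification of the two maximal time intervals under the reparametrization (so that existence, uniqueness, and the limit all transfer), and the fact that $C^\infty$, not merely Hausdorff, convergence survives the polar map, which is ensured by the uniform interior-ball bound coming from the nondegenerate limiting ellipsoid $E$. I expect the bookkeeping around the time change to be the main obstacle among these, the hard analysis being already packaged in Theorem \ref{thm: 1}.
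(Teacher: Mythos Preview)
Your proposal is correct and matches the paper's approach: the paper also deduces Theorem \ref{thm: 3} from Theorem \ref{thm: 1} (and the $p=-2$ case of Theorem A1 when $n=2$) via the polar duality of Lemma \ref{lem: ev polar} together with the centroid/Santal\'o-point correspondence. Note that for $p=-n$, $\varphi\equiv1$ the formula in Lemma \ref{lem: ev polar} collapses exactly to (\ref{eq: flow3}), so the time reparametrization you worry about is in fact the identity---the same time variable serves for both flows, which removes the only bookkeeping obstacle you flag.
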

Proofs of Theorems \ref{thm: 1} and \ref{thm: 3} result from finding a family of entropy functionals $B_p^{\varphi}$ (see Definition \ref{def: strong entropy}), a finding that was inspired by the definition of curvature image due to Petty \cite{Petty} and an inequality of Lutwak \cite{Lutwak 1986}; see Theorem \ref{thm: Lutwak 1986}. In the course of proving our main theorems, we also prove Theorems A1, A2 and A3 for a subsequence of times. In Section \ref{sec: ref to stab}, we give a convex-geometric argument to obtain the asymptotic shapes in Theorems \ref{thm: 1}, \ref{thm: 3}, A1, A2, and A3; the argument does not rely on uniform higher order regularity estimates for the normalized solutions, and it employs only entropy functionals $B_p^{\varphi}$. In Section \ref{sec: before application}, we will prove Theorem \ref{thm: 3a} and discuss the $C^{\infty}$ convergence in Theorems A1, A2, and A3. In Section \ref{sec: application}, we present a few applications of the flow (\ref{e: flow0}), such as a direct proof of Lutwak's inequality 1986.
\section{Background and notation}
\subsection{Differential Geometry}
The matrix of the radii of the curvature of $\partial K$ is denoted by $\mathfrak{r}=[\mathfrak{r}_{ij}]_{1\leq i,j\leq n-1}$ and the entries of $\mathfrak{r}$ are considered as functions on the unit sphere. They can be expressed in terms of the support function and its covariant derivatives as $\mathfrak{r}_{ij}:=\bar{\nabla}_i\bar{\nabla}_j h+h\bar{g}_{ij},$ where $[\bar{g}_{ij}]_{1\leq i,j\leq n-1}$ is the standard metric on $\mathbb{S}^{n-1}$ and $\bar{\nabla}$ is the standard Levi-Civita connection of $\mathbb{S}^{n-1}.$ The Gauss curvature of $\partial K$ is denoted by $\mathcal{K}$, and as a function on $\partial K$, it is also related to the support function of the convex body by \[\frac{1}{\mathcal{K}\circ\nu^{-1}}:=S_{n-1}=\det_{\bar{g}}[\bar{\nabla}_i\bar{\nabla}_jh+\bar{g}_{ij}h]:=\frac{\det [\mathfrak{r}_{ij}]}{\det{[\bar{g}_{ij}]}}.\]
In the sequel, for simplicity, we usually denote $\mathcal{K}\circ\nu^{-1}$ by $\mathcal{K}.$ The principal radii of curvature $\{\lambda_i\}_{1\le i\le n-1}$ are the eigenvalues of $[\mathfrak{r}_{ij}]$ with respect to $[\bar{g}_{ij}]$. Moreover, we write $[w_{ij}]$ for the second fundamental form of $\partial K$ and the principal curvatures are the eigenvalues of $[w_{ij}]$ with respect to $[g_{ij}]$ which we shall denote by $\{\kappa_i\}=\{1/\lambda_i\circ\nu\}.$
\subsection{Convex Geometry}
We will start by defining the polar body.\\
\textbf{\emph{Polar body:}}
The polar body, $K^{\ast}$, of convex body $K$ with the origin of $\mathbb{R}^n$ in its interior is the convex body defined as
\[K^{\ast}=\{x\in\mathbb{R}^n| x\cdot y\leq 1 \mbox{~for~all~}y\in K\}.\]
The Blaschke-Santal\'{o} inequality states that
\[\min_{x\in\operatorname{int} K}V(K)V((K-x)^{\ast})\leq \omega_n^2.\]
Equality holds exclusively for ellipsoids. The point for which the above minimum is achieved is called the Santal\'{o} point, and it will be denoted as $e_{-n}(K).$ In what follows, we will furnish all geometric quantities associated with $K^{\ast}$ with $~^{\ast}.$
\begin{theorem}\label{thm: eigenvalue relation body and it polar} Let $K\in \mathcal{F}^n_0.$ Suppose that $0<a\le h_K\le b<\infty$ and $0<c\le\kappa_i\le d<\infty.$  Then
$$c_1\le\kappa_i^{\ast}\leq c_2,$$
for $c_1,c_2>0$ depending only on $a,b,c,d.$
\end{theorem}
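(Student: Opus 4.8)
The plan is to deduce the two-sided bound on $\kappa_i^{\ast}$ from an eigenvalue estimate for the Weingarten map of $\partial K^{\ast}$, using the classical polar relation $\rho_{K^{\ast}}=1/h_K$. Since $K\in\mathcal F^n_0$, the polar $K^{\ast}$ again lies in $\mathcal F^n_0$, so the $\kappa_i^{\ast}$ are well defined. First I would translate the hypotheses geometrically. On $\mathbb S^{n-1}$, $a\le h_K\le b$ is equivalent to $aB\subseteq K\subseteq bB$, so every point of $\partial K$ has Euclidean norm in $[a,b]$; writing $X(u):=\bar{\nabla}h(u)+h(u)\,u$ for the point of $\partial K$ with outer unit normal $u$, this gives $a\le|X(u)|\le b$, and since $|X|^2=h^2+|\bar{\nabla}h|^2$ with $h\ge a$ it also gives $|\bar{\nabla}h|^2\le b^2-a^2$ and hence $1\le |X|/h\le b/a$. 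The curvature bound $c\le\kappa_i\le d$ is precisely $\tfrac1d\,\bar g\le\mathfrak r\le\tfrac1c\,\bar g$ for the radii matrix $\mathfrak r=[\bar{\nabla}_i\bar{\nabla}_j h+h\,\bar g_{ij}]$, equivalently $c\,\mathfrak r\le\bar g\le d\,\mathfrak r$ as quadratic forms.

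The main step is to compute the first and second fundamental forms of $\partial K^{\ast}$ in a parametrization adapted to $K$. I would use the radial parametrization $Y(u):=u/h(u)$ of $\partial K^{\ast}$ (legitimate because $\rho_{K^{\ast}}=1/h_K$), with $e_i:=\partial_i u$ a local frame on $\mathbb S^{n-1}$, and check that a unit outer normal of $\partial K^{\ast}$ at $Y(u)$ is $N(u):=X(u)/|X(u)|$: a one-line computation gives $\langle\partial_iY,X\rangle=0$, while $\langle Y,X\rangle=1>0$ fixes the orientation. Differentiating $Y$ once gives the induced metric $g^{\ast}_{ij}=\langle\partial_iY,\partial_jY\rangle=\tfrac1{h^2}\bigl(\bar g_{ij}+\tfrac1{h^2}\bar{\nabla}_ih\,\bar{\nabla}_jh\bigr)$. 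Differentiating twice, contracting with $N$, and using the Gauss formula $\partial_ie_j=\bar\Gamma^k_{ij}e_k-\bar g_{ij}u$ for the sphere, all the lower-order terms cancel and one is left with $\langle N,\partial_i\partial_jY\rangle=-\tfrac1{h|X|}\mathfrak r_{ij}$; so the second fundamental form of $\partial K^{\ast}$ in this chart is $\mathrm{II}_{ij}=\tfrac1{h|X|}\mathfrak r_{ij}$ (positive definite, as it should be). Consequently the principal radii of curvature $\lambda_i^{\ast}=1/\kappa_i^{\ast}$ of $K^{\ast}$ are the eigenvalues of $\mathrm{II}^{-1}g^{\ast}=\tfrac{|X|}{h}\,\mathfrak r^{-1}\bigl(\bar g+\tfrac1{h^2}\bar{\nabla}h\otimes\bar{\nabla}h\bigr)$.

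The estimate is then immediate. The form $Q:=\tfrac{|X|}{h}\bigl(\bar g+h^{-2}\bar{\nabla}h\otimes\bar{\nabla}h\bigr)$ has eigenvalues $\tfrac{|X|}{h}$ (on $(\bar{\nabla}h)^{\perp}$) and $\tfrac{|X|}{h}\bigl(1+|\bar{\nabla}h|^2/h^2\bigr)$; by the bounds above these all lie in $[1,b^3/a^3]$, so $\bar g\le Q\le\tfrac{b^3}{a^3}\bar g$. Combined with $c\,\mathfrak r\le\bar g\le d\,\mathfrak r$ this yields $c\,\mathfrak r\le Q\le\tfrac{b^3d}{a^3}\mathfrak r$, hence every eigenvalue of $\mathfrak r^{-1}Q$ (which is similar to the symmetric $\mathfrak r^{-1/2}Q\mathfrak r^{-1/2}$) lies in $[\,c,\ \tfrac{b^3d}{a^3}\,]$. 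Thus $c\le\lambda_i^{\ast}\le\tfrac{b^3d}{a^3}$, i.e.\ $\tfrac{a^3}{b^3d}\le\kappa_i^{\ast}\le\tfrac1c$, which is the claim with $c_1=a^3/(b^3d)$ and $c_2=1/c$.

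The hard part is genuinely just the differential-geometric computation for $K^{\ast}$: deriving the clean identities $g^{\ast}_{ij}=\tfrac1{h^2}\bigl(\bar g_{ij}+h^{-2}\bar{\nabla}_ih\,\bar{\nabla}_jh\bigr)$ and $\mathrm{II}_{ij}=\tfrac1{h|X|}\mathfrak r_{ij}$ requires patient bookkeeping with the spherical connection terms, though pleasantly all the lower-order contributions cancel. One could instead quote a known formula for the curvature of a polar body (e.g.\ from Schneider's book), but the direct computation is short and keeps the argument self-contained; everything after it is elementary linear algebra, and the explicit constants $a,b,c,d$ are all that enters.
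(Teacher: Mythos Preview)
Your proof is correct, and the underlying geometric input is the same as in the paper: both you and the paper compute the first and second fundamental forms of $\partial K^{\ast}$ via the polar relation $\rho_{K^{\ast}}=1/h_K$ (the paper phrases this through the radial-graph formulas $(1)$--$(4)$ applied to $K^{\ast}$, which yields exactly your identities $g^{\ast}_{ij}=h^{-2}(\bar g_{ij}+h^{-2}\bar\nabla_i h\,\bar\nabla_j h)$ and $\mathrm{II}_{ij}=\tfrac{1}{h|X|}\mathfrak r_{ij}$). The genuine difference is in the concluding linear-algebra step. The paper only extracts a one-sided bound from the matrix product $[\mathfrak r_{ij}][g^{\ast ij}]$: it invokes a singular-value inequality (Bhatia, Corollary~III.4.6) to bound $\kappa_{n-1}^{\ast}$ from above, and then obtains the lower bound on $\kappa_1^{\ast}$ by a separate route, namely the polar Gauss-curvature identity $\dfrac{h_K^{n+1}}{\mathcal K}\cdot\dfrac{h_{K^{\ast}}^{n+1}}{\mathcal K^{\ast}}=1$, which controls the product $\prod_i\kappa_i^{\ast}$. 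Your simultaneous-diagonalization argument (bounding $Q=\tfrac{|X|}{h}(\bar g+h^{-2}\bar\nabla h\otimes\bar\nabla h)$ two-sidedly by $\bar g$, then $\bar g$ two-sidedly by $\mathfrak r$, and passing to $\mathfrak r^{-1/2}Q\,\mathfrak r^{-1/2}$) delivers both bounds at once and even gives explicit constants $c_1=a^{3}/(b^{3}d)$ and $c_2=1/c$. So your route is a bit more self-contained and avoids both the external matrix inequality and the Gauss-curvature identity; the paper's route, on the other hand, records formulas $(1)$--$(4)$ and the identity $\tfrac{h_K^{n+1}}{\mathcal K}\cdot\tfrac{h_{K^{\ast}}^{n+1}}{\mathcal K^{\ast}}=1$ that are of independent interest and are reused elsewhere.
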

\begin{proof}
In general, parameterizing $\partial K$ as a graph over the unit sphere with the corresponding radial distance function $r:\mathbb{S}^{n-1}\to \mathbb{R}$, we can write
the metric $[g_{ij}]$ and its inverse $[g^{ij}]$, the second fundamental form $[w_{ij}]$, and $[\mathfrak{r}_{ij}^{\ast}]$ in terms of $r$ and whose spatial derivatives as follows:
\begin{enumerate}
  \item $\displaystyle g_{ij}=r^2\bar{g}_{ij}+\bar{\nabla}_ir\bar{\nabla}_jr;$
  \item $\displaystyle g^{ij}=\frac{1}{r^2}\left(\bar{g}^{ij}-\frac{\bar{\nabla}^ir\bar{\nabla}^jr}{r^2+|\bar{\nabla}r|^2}\right);$
  \item $\displaystyle w_{ij}=\frac{1}{\sqrt{r^2+|\bar{\nabla}r|^2}}\left(-r\bar{\nabla}_i\bar{\nabla}_jr+2\bar{\nabla}_ir\bar{\nabla}_jr+
  r^2\bar{g}_{ij}\right);$
  \item  $\displaystyle\mathfrak{r}^{\ast}_{ij}=\bar{\nabla}_i\bar{\nabla}_j\frac{1}{r}+\frac{1}{r}\bar{g}_{ij}=\frac{\sqrt{r^2+|\bar{\nabla}r|^2}}{r^3}w_{ij}.$
\end{enumerate}
The proofs of $(1)-(3)$ can be found in \cite{Zhu}, and $(4)$ follows from the fact that $\frac{1}{r}$ is the support function of $K^{\ast}$ (see also Oliker-Simon \cite[Indentities (7.6), (7.31)]{Oliker}).
We apply these formulas to $K^{\ast}.$ From $(2)$ and $(4)$ we get $[\mathfrak{r}_{ij}][g^{\ast ij}]=\frac{\sqrt{r^{\ast2}+|\bar{\nabla}r|^{\ast2}}}{r^{\ast3}}[w_{in}^{\ast}g^{\ast nj}]$. To prove the claim, we have only to consider points for which the gradient of the radial function $r^{\ast}$ does not vanish.
Around such a point, we introduce an orthonormal frame $\{e_1\cdots,e_{n-1}\}$ on $\mathbb{S}^{n-1}$ such that $e_1=\frac{\bar{\nabla}r{\ast}}{|\bar{\nabla}r{\ast}|}$. Then $\bar{\nabla} r{\ast}=(|\bar{\nabla} r{\ast}|,0,\cdots,0).$ Thus, in such a frame we may express $[\mathfrak{r}_{ij}][g^{\ast ij}]$ as follows:
 \begin{align}\label{e: det}
 [\mathfrak{r}_{ij}][g^{\ast ij}]=[\mathfrak{r}_{ij}]\left(
                        \begin{array}{cccc}
                          \frac{1}{r^{\ast2}+|\bar{\nabla}r|^{\ast2}} & 0 & \cdots & 0 \\
                          0 & \frac{1}{r^{\ast2}} & \cdots & 0\\
                          \vdots & \vdots & \ddots & \vdots\\
                          0 & \cdots & 0 & \frac{1}{r^{\ast2}}\\
                        \end{array}
                      \right):=AB.
  \end{align}
The eigenvalues of $A$ are $\{\lambda_i\}$, the eigenvalues of $B$ are $\{\frac{1}{r^{\ast2}+|\bar{\nabla}r|^{\ast2}}, \frac{1}{r^{\ast2}}\}$, and the eigenvalues of $AB$ are $\{\frac{\sqrt{r^{\ast2}+|\bar{\nabla}r|^{\ast2}}}{r^{\ast3}}\kappa_i^{\ast}\}$.
We may assume that $\kappa_1^{\ast}\leq \kappa_2^{\ast}\le\cdots\le \kappa_{n-1}^{\ast}$ and $\lambda_1\leq \lambda_2\le\cdots\le \lambda_{n-1}.$ It follows (for example, see Corollary III4.6 \cite{Bhatia 1997}) that the
eigenvalues of $AB$ are bounded above by $\frac{\lambda_{n-1}}{r^{\ast2}}.$ Thus, we get an upper bound on $\kappa_{n-1}^{\ast}$ in terms of $a,b,c.$ Moreover, from the identity $\frac{h_K^{n+1}}{\mathcal{K}}(x)\frac{h_{K^{\ast}}^{ n+1}}{\mathcal{K}^{\ast}}(x^{\ast})=1$
where $x\in\partial K,$ and $x^{\ast}\in\partial K^{\ast}$ satisfies $x\cdot x^{\ast}=1$ \footnote{This identity can be proved by taking the determinant of both sides of (\ref{e: det}). See also Hug \cite[Theorem 2.2]{Hug} for a proof of this identity for non-smooth, convex hypersurfaces.}, it follows that $l<\mathcal{K}^{\ast}$ for some positive finite number depending only on $a,b,d.$ Therefore, since $\kappa_{n-1}^{\ast}$ is bounded above, the lower bound on $\kappa_1^{\ast}$ follows.
\end{proof}
\begin{remark}
Explicit equality between the elementary symmetric functions of principal curvatures of $K^{\ast}$ and the principal radii of curvature of $K$ is given by Hug \cite[Theorem 5.1]{Hug2002} in a general setting in which the convex body might not be smooth. Moreover, \cite[Corollary 5.1]{Hug2002} deduces an inequality from which lower and upper bounds for the principal curvatures (and not only for their elementary symmetric functions) can be deduced.
\end{remark}
\begin{lemma}\label{lem: ev polar}
As $K_t$ evolve by (\ref{eq: flow4}), their polars $K_t^{\ast}$ evolve as follows:
\[\partial_th^{\ast}=-\varphi\left(\frac{h^{\ast}u+\bar{\nabla} h^{\ast}}{\sqrt{h^{\ast2}+|\bar{\nabla} h^{\ast}|^2}}\right)\left(\frac{(h^{\ast2}+|\bar{\nabla} h^{\ast}|^2)^{\frac{n+p}{2}}}{h^{\ast n}}\right)\mathcal{K}^{\ast},~~h^{\ast}(\cdot,t):=h_{K_t^{\ast}}(\cdot).\]
\end{lemma}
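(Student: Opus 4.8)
\emph{Proof plan.} The strategy is to transport the evolution of $h=h_{K_t}$ through the pointwise polarity dictionary, using the elementary fact that for \emph{any} smooth family of convex bodies $\{L_t\}$ and any smooth parametrization $Y(\xi,t)$ of $\partial L_t$ with outer unit normal $N(\xi,t)$ (not necessarily the Gauss parametrization), differentiating $h_{L_t}(N(\xi,t))=Y(\xi,t)\cdot N(\xi,t)$ and using $\bar{\nabla}h_{L_t}(N)=Y-(Y\cdot N)N$ together with $N\cdot\partial_tN=0$ gives $(\partial_th_{L_t})(N(\xi,t))=\partial_tY(\xi,t)\cdot N(\xi,t)$; that is, the normal speed of the parametrization recovers the time derivative of the support function at the corresponding normal direction.

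Next I record the pointwise polarity dictionary. If $x=hu+\bar{\nabla}h\in\partial K$ is the boundary point with outer normal $u$, then $|x|^2=h^2+|\bar{\nabla}h|^2$, the point $u/h$ lies on $\partial K^{\ast}$, and the outer unit normal of $\partial K^{\ast}$ there is $x/|x|$. Writing $v$ for this normal direction, one reads off $h^{\ast}(v)=1/|x|$, and — since $u/h=\rho_{K_t^{\ast}}(u)\,u$ is the point of $\partial K^{\ast}$ with normal $v$, hence of norm $\sqrt{h^{\ast}(v)^2+|\bar{\nabla}h^{\ast}(v)|^2}$ — also $h(u)=\bigl(h^{\ast}(v)^2+|\bar{\nabla}h^{\ast}(v)|^2\bigr)^{-1/2}$ and $u=\bigl(h^{\ast}(v)v+\bar{\nabla}h^{\ast}(v)\bigr)\big/\sqrt{h^{\ast}(v)^2+|\bar{\nabla}h^{\ast}(v)|^2}$. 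I will also use the polar curvature identity $\frac{h^{n+1}}{\mathcal{K}}(x)\,\frac{(h^{\ast})^{n+1}}{\mathcal{K}^{\ast}}(x^{\ast})=1$ from the footnote of Theorem \ref{thm: eigenvalue relation body and it polar}, equivalently $1/\mathcal{K}=\mathcal{K}^{\ast}\big/\bigl(h^{n+1}(h^{\ast})^{n+1}\bigr)$.

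Now the computation. Parametrize $\partial K_t^{\ast}$ by $Y(u,t):=u/h(u,t)$ with outer unit normal $N(u,t)=x(u,t)/|x(u,t)|$, $x(u,t)=h(u,t)u+\bar{\nabla}h(u,t)$. Using \eqref{eq: flow4},
\[
\partial_tY(u,t)=-\frac{\partial_th}{h^2}\,u=-\frac{\varphi(u)\,h^{-p}}{\mathcal{K}}\,u,
\]
so, since $u\cdot x=h$, the normal speed is $\partial_tY\cdot N=-\dfrac{\varphi(u)\,h^{1-p}}{\mathcal{K}\,|x|}$, and by the fact above this equals $\partial_th^{\ast}$ evaluated at $v=N(u,t)$. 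Substituting $|x|=1/h^{\ast}$, then $1/\mathcal{K}=\mathcal{K}^{\ast}\big/\bigl(h^{n+1}(h^{\ast})^{n+1}\bigr)$, then $h^{-n-p}=(h^{\ast 2}+|\bar{\nabla}h^{\ast}|^2)^{(n+p)/2}$ and the expression for $u$ in terms of $v$, yields
\[
\partial_th^{\ast}=-\varphi\!\left(\frac{h^{\ast}v+\bar{\nabla}h^{\ast}}{\sqrt{h^{\ast 2}+|\bar{\nabla}h^{\ast}|^2}}\right)\frac{(h^{\ast 2}+|\bar{\nabla}h^{\ast}|^2)^{\frac{n+p}{2}}}{(h^{\ast})^{n}}\,\mathcal{K}^{\ast},
\]
which is the claimed identity after renaming $v$ to $u$.

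The only delicate points are bookkeeping: keeping track of which point of $\mathbb{S}^{n-1}$ each quantity is evaluated at ($u$ for $\partial K_t$ versus its polar image $v$ for $\partial K_t^{\ast}$), and invoking the correct form of the polar curvature identity; I would therefore keep the change of variables $u\leftrightarrow v$ explicit and only collapse the notation at the very end. Everything else is routine — in particular no nondegeneracy of $\bar{\nabla}h^{\ast}$ is needed since $\sqrt{h^{\ast 2}+|\bar{\nabla}h^{\ast}|^2}\ge h^{\ast}>0$ — and as a sanity check the resulting speed is negative, consistent with $K_t^{\ast}$ contracting while $K_t$ expands.
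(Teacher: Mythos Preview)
Your proof is correct; the paper itself gives no argument here, referring instead to \cite[Theorem 2.2]{Ivaki Proc}, and your explicit computation---parametrize $\partial K_t^{\ast}$ radially by $u/h(u,t)$, read off the normal speed $\partial_tY\cdot N$, and then translate via the polarity dictionary $h^{\ast}=1/|x|$, $h=(h^{\ast2}+|\bar\nabla h^{\ast}|^2)^{-1/2}$, $u=(h^{\ast}v+\bar\nabla h^{\ast})/\sqrt{h^{\ast2}+|\bar\nabla h^{\ast}|^2}$ together with the polar curvature identity---is precisely the calculation that reference carries out. The bookkeeping you flag (keeping $u$ and $v$ separate until the end) is the only place one can slip, and you handle it cleanly.
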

\begin{proof}
The proof is similar to the one in \cite[Theorem 2.2]{Ivaki Proc}.
\end{proof}
\textbf{\emph{Minkowski's mixed volume inequality and curvature function:}}
A convex body is said to be of class $ C^{k}_{+}$ for some $k\ge2$, if its boundary hypersurface is $k$-times continuously differentiable, in the sense of differential geometry, and if the Gauss map $\nu:\partial K\to \mathbb{S}^{n-1}$ is well-defined and a $ C^{k-1}$-diffeomorphism.

Let $K,L$ be two convex bodies and $0<a<\infty$. The Minkowski sum $K+aL$ is defined as $h_{K+aL}=h_K+ah_L$ and the mixed volume $V_1(K,L)$ of $K$ and $L$ is defined by
\[V_1(K,L)=\frac{1}{n}\lim_{a\to0^{+}}\frac{V(K+aL)-V(K)}{a}.\]
A fundamental fact is that corresponding to each convex body $K$, there is a unique Borel measure $S_K$ on the unit sphere such that
\[V_1(K,L)=\frac{1}{n}\int_{\mathbb{S}^{n-1}}h_L(u)dS_K(u)\]
for each convex body $L$. The measure $S_K$ is called the surface area measure of $K.$ Recall that if $K$ is $ C^2_+$, then $S_K$ is absolutely continuous with respect to $\sigma$, and the Radon-Nikon derivative $dS_K(u)/d\sigma(u)$ defined on $\mathbb{S}^{n-1}$ is the reciprocal Gauss curvature of $\partial K$ at the point of $\partial K$ whose outer normal is $u.$
For $K\in \mathcal{K}^{n}$,
\[V(K)=V_1(K,K)=\frac{1}{n}\int_{\mathbb{S}^{n-1}}h_K(u)dS_K(u).\]
Of significant importance in convex geometry is the Minkowski mixed volume inequality. Minkowski's mixed volume inequality states that for $K,L\in\mathcal{K}^n,$
\[V_1(K,L)^n\geq V(K)^{n-1}V(L).\]
Equality holds, if and only if $K$ and $L$ are homothetic.

A convex body $K$ is said to have a positive continuous curvature function $f_K$, defined on the unit sphere, provided that for every convex body $L$
\[V_1(K,L)=\frac{1}{n}\int_{\mathbb{S}^{n-1}}h_Lf_Kd\sigma,\]
where $\sigma$ is the spherical Lebesgue measure on $\mathbb{S}^{n-1}.$ A convex body can have at most one curvature function; see \cite[p.~115]{bon}.
If $K$ is of class $ C^2_+$, then the curvature function is the reciprocal Gauss curvature of $\partial K$ transplanted to $\mathbb{S}^{n-1}$ via the Gauss map.
\section{Entropy points and entropy functionals}\label{sec: entropy}
Write $\mathcal{S}^{+}$ for the set of positive, smooth functions on $\mathbb{S}^{n-1}$ and write $\mathcal{S}^{+}_e$ for the set of positive, smooth, even functions on the unit sphere. That is, $\varphi\in \mathcal{S}^{+}_e,$ if $\varphi\in \mathcal{S}^{+}$ and $\varphi(u)=\varphi(-u).$
\begin{lemma}\label{lem: 1}
There exists a unique point $e_p(K)\in \operatorname{int}K$ such that
\[
\begin{cases}
\min\limits_{x\in K}\int_{\mathbb{S}^{n-1}}(h_K(u)-x\cdot u)^pd\sigma=\int_{\mathbb{S}^{n-1}}(h_K(u)-e_p\cdot u)^pd\sigma &\mbox{if } 1<p<\infty\\
\max\limits_{x\in K}\int_{\mathbb{S}^{n-1}}(h_K(u)-x\cdot u)^pd\sigma=\int_{\mathbb{S}^{n-1}}(h_K(u)-e_p\cdot u)^pd\sigma &\mbox{if } 0<p<1\\
\min\limits_{x\in\operatorname{int}K}\int_{\mathbb{S}^{n-1}}-\log (h_K(u)-x\cdot u)d\sigma=\int_{\mathbb{S}^{n-1}}-\log (h_K(u)-e_0\cdot u)d\sigma & \mbox{if } p=0\\
\min\limits_{x\in\operatorname{int}K}\int_{\mathbb{S}^{n-1}}(h_K(u)-x\cdot u)^pd\sigma=\int_{\mathbb{S}^{n-1}}(h_K(u)-e_p\cdot u)^pd\sigma &\mbox{if } -n\leq p<0.
\end{cases}
\]
Additionally, $e_p$ is characterized by
$\int_{\mathbb{S}^{n-1}}\frac{u}{\left(h_K(u)-e_p(K)\cdot u\right)^{1-p}}d\sigma(u)=0.$

Moreover, let $-n\leq p\leq -n+1$ and $\varphi\in \mathcal{S}^{+}.$ Then there exists a unique point, $e_p^{\varphi}(K)$, in the interior of $K$ such that
\[\min\limits_{x\in\operatorname{int}K}\int_{\mathbb{S}^{n-1}}\frac{(h_K(u)-x\cdot u)^p}{\varphi(u)}d\sigma=\int_{\mathbb{S}^{n-1}}\frac{(h_K(u)-e_p^{\varphi}\cdot u)^p}{\varphi(u)}d\sigma.\]
Additionally, $e_p^{\varphi}$ is characterized by
$\int_{\mathbb{S}^{n-1}}\frac{u}{\varphi(u)\left(h_K(u)-e_p^{\varphi}(K)\cdot u\right)^{1-p}}d\sigma(u)=0.$
\end{lemma}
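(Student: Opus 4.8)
The plan is to treat all four $p$-ranges (and the $\varphi$-version) at once by studying the single functional $\mathcal{E}_p(x):=\int_{\mathbb{S}^{n-1}}(h_K(u)-x\cdot u)^{p}\,d\sigma(u)$ on $K$ when $p>0$ and on $\operatorname{int}K$ when $p<0$, with $\mathcal{E}_0(x):=-\int_{\mathbb{S}^{n-1}}\log(h_K(u)-x\cdot u)\,d\sigma(u)$ and $\mathcal{E}_p^{\varphi}(x):=\int_{\mathbb{S}^{n-1}}\varphi(u)^{-1}(h_K(u)-x\cdot u)^{p}\,d\sigma(u)$ in the remaining cases. Note that $h_K(u)-x\cdot u=h_{K-x}(u)\ge0$ for $x\in K$, with strict inequality precisely on $\operatorname{int}K$, and that $x\mapsto h_K(u)-x\cdot u$ is affine. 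Since $t\mapsto t^{p}$ is strictly convex on $(0,\infty)$ for $p>1$ and for $p<0$, strictly concave there for $0<p<1$, and $-\log$ is strictly convex, and since $\{u\in\mathbb{S}^{n-1}:v\cdot u\neq0\}$ has full $\sigma$-measure for every $v\neq0$, differentiating twice under the integral along a line segment in $\operatorname{int}K$ shows that $\mathcal{E}_p$ (for $p>1$, $p=0$, $p<0$) and $\mathcal{E}_p^{\varphi}$ are strictly convex, and $\mathcal{E}_p$ ($0<p<1$) is strictly concave, on $\operatorname{int}K$. This gives the uniqueness assertion and the equivalence between being an extremizer and being an interior critical point; and one differentiation under the integral identifies the critical point equation with $\int_{\mathbb{S}^{n-1}}u\,(h_K(u)-x\cdot u)^{p-1}\,d\sigma=0$, i.e.\ with the stated characterization (for $p=0$ read $p-1=-1$). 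So everything reduces to showing that the extremizer exists and lies in $\operatorname{int}K$.

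For $p>0$ the integrand of $\mathcal{E}_p$ is bounded and continuous on $K\times\mathbb{S}^{n-1}$, so $\mathcal{E}_p$ is continuous on the compact set $K$ and its minimum ($p>1$) or maximum ($0<p<1$) is attained at some $\bar x\in K$; the same holds for $\mathcal{E}_0$, whose boundary singularity is only logarithmic. For $p\le0$ I would distinguish according to the strength of the singularity at a boundary point $x_0$: if $\mathcal{E}_p(x_0)=+\infty$ for $x_0\in\partial K$, or more generally if $\nabla\mathcal{E}_p(x)\cdot u_0\to+\infty$ as $x\to x_0$, where $u_0$ is the outer unit normal at $x_0$ (for $K$ of class $C^2_+$ this happens exactly when $p\le(3-n)/2$, since $h_{K-x_0}$ vanishes quadratically at $u_0$ — in particular throughout the range $-n\le p\le -n+1$ of $\mathcal{E}_p^{\varphi}$ and at $p=-n$), then $\mathcal{E}_p$ is strictly decreasing along inward normal segments near $\partial K$, so $\inf_{\operatorname{int}K}\mathcal{E}_p$ is attained on a compact subset of $\operatorname{int}K$ and existence and interiority come together. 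In the complementary sub-range $(3-n)/2<p<0$, and for $p=0$ when $n\ge4$, the integral $\int_{\mathbb{S}^{n-1}}(h_{K-x_0}(u))^{p-1}\,d\sigma$ converges, so $\mathcal{E}_p$ extends continuously to $K$, a minimizer $\bar x\in K$ exists by compactness, and interiority is handled as below.

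It remains to exclude $\bar x\in\partial K$ in the cases $p>1$, $0<p<1$, $p=0$, and $(3-n)/2<p<0$. Suppose $\bar x\in\partial K$; after a translation assume $\bar x=0$, pick a unit outer normal $u_0$ at $0$ — so $h_K(u_0)=0$ and $K\subseteq\{y:y\cdot u_0\le0\}$ — and let $\rho u:=u-2(u\cdot u_0)u_0$ be the reflection across $u_0^{\perp}$. The crucial observation is the monotonicity
\[
h_K(\rho u)\ge h_K(u)\qquad\text{for all }u\ \text{with}\ u\cdot u_0>0,
\]
which holds because, if $z\in K$ realizes $h_K(u)$, then $h_K(\rho u)\ge z\cdot\rho u=h_K(u)-2(u\cdot u_0)(z\cdot u_0)\ge h_K(u)$ since $z\cdot u_0\le0$; the inequality is strict whenever $z\cdot u_0<0$, which for a smooth strictly convex $K$ fails only for $u$ in the normal cone of $K$ at $0$, a $\sigma$-null set. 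Differentiating $t\mapsto\mathcal{E}_p(-tu_0)$ at $t=0^{+}$ (legitimate by dominated convergence, a majorant near $u_0$ being produced by the monotonicity of $s\mapsto s^{p-1}$) and folding the sphere along $u_0^{\perp}$ gives, up to a positive factor,
\[
\frac{d}{dt}\Big|_{t=0^{+}}\mathcal{E}_p(-tu_0)=\pm\int_{\{u\cdot u_0>0\}}\bigl(h_K(u)^{p-1}-h_K(\rho u)^{p-1}\bigr)(u\cdot u_0)\,d\sigma(u).
\]
Since $s\mapsto s^{p-1}$ is increasing for $p>1$ and decreasing for $p<1$, this derivative is strictly negative in the minimization cases ($p>1$, $p=0$, $-n<p<0$) and strictly positive in the maximization case ($0<p<1$); as $-tu_0\in K$ for small $t>0$ ($0\in\partial K$ and $K$ smooth, strictly convex), this contradicts the extremality of $0$, so $\bar x\in\operatorname{int}K$. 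The $\varphi$-statement is exactly the coercivity case of the second paragraph. The main obstacle is this last step: checking that the two regimes of the second paragraph together with the reflection argument cover every admissible $p$, and making the reflection/differentiation step fully rigorous — the dominated-convergence majorant near the contact normal, the strictness of $h_K(\rho u)\ge h_K(u)$, and the fact that the inward normal segment remains in $K$.
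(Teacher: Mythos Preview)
Your proof follows the same route as the paper's: strict convexity/concavity of $\mathcal{E}_p$ on $\operatorname{int}K$ for uniqueness and the first-order characterization, the reflection inequality $h_K(\rho u)\ge h_K(u)$ (for $u\cdot u_0>0$) at a putative boundary extremizer to force a strict one-sided improvement by moving inward, and boundary blow-up for the $\varphi$-version with $-n\le p\le -n+1$.

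Two minor differences are worth noting. First, your case-split at $p=(3-n)/2$ is extra bookkeeping the paper omits --- the paper simply differentiates $\int\bar h^{\,p}\,d\sigma$ at $s=0$ without separating ``coercive'' from ``integrable-gradient'' ranges. Your version is arguably more honest about the dominated-convergence hypothesis needed for the one-sided derivative, but one can also avoid the split entirely by arguing with difference quotients and monotone convergence (or by observing that when the derivative integral diverges it does so with a definite sign, which is all the contradiction needs). Second, your strictness claim for the reflection inequality (``fails only on the normal cone of $K$ at $0$, a $\sigma$-null set'') and your vanishing-order computation for $h_{K-x_0}$ both rely on $K$ being smooth and strictly convex; the paper instead observes that $h_K(-u_0)>0=h_K(u_0)$, so by continuity the inequality $h_K(u^-)>h_K(u^+)$ holds on an open set, and this works for arbitrary $K\in\mathcal{K}^n$. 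Since $e_p$ is later needed on all of $\mathcal{K}^n$ (in the continuity theorem for the entropy map), you should adopt this more robust version of the strictness step.
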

\begin{proof}
Existence and uniqueness of a point in $K$ for each of the above minimizations and maximizations follow from the strict concavity or strict convexity of the corresponding functional and compactness of $K$. The proof of $e_p\in\operatorname{int}K$ follows exactly the one given by Guan and Ni \cite[Lemmas 2.3, 2.4]{Guan NI 2013}. For completeness, we present it here. We shall consider the case $p\ne0.$ Suppose, on the contrary, that $e_p(K)$ is on the boundary of $K$ and $\nu$ is the outer normal at $e_p.$ By Busemann's theorem \cite[Theorem 1.12]{Busemann 2012}, there is a rectangular coordinate system $(y_1,\cdots, y_n)$ such that $e_p$ is the origin, $(0,\cdots,0,1)=\nu$, and the segment $[\vec{o}, -ty_n]$ is contained in $\operatorname{int}K$ for small $t>0$. In view of this fact, we may then assume that in the standard coordinate system of $\mathbb{R}^n$ one has $e_p=\vec{o}$, $\nu=(0,\cdots,1)$, and $K$ lies below the hyperplane $\nu^{\perp}.$ Take an arbitrary point $u^+=(u_1,\cdots,u_n)$, with $u_n\geq0$, and define $u^{-}=(u_1,\cdots,-u_n).$ For a fixed $u^+$ define $i(u^+)$, the point on the boundary of $K$ that $h_K(u^+)=u^+\cdot i(u^+).$ We have
\[h_K(u^-)\geq u^-\cdot i(u^+) \geq u^+\cdot i(u^+)=h_K(u^+).\]
Moreover, $h_K((0,\cdots,-1))>0$ and $h_K((0,\cdots,1))=0.$ So the above inequality must be strict for a set of positive measure.
Define $\bar{h}(u)=h_K(u)+su_n,$ and note that $\bar{h}$ is positive for all $u$, provided $(0,\cdots,-s)\in\operatorname{int}K,$ which is the case if $0<s<t$. Hence, we have
\begin{align*}
\sgn\left(\frac{d}{ds}\Bigg\vert_{s=0}\left(\int_{\mathbb{S}^{n-1}}\bar{h}^pd\sigma\right)\right)&=\sgn\left(p\int_{\mathbb{S}^{n-1}}h_K^{p-1}(u)(u\cdot\nu)d\sigma\right)\\
&=\sgn\left(p\int_{\{u_n>0\}\cap\mathbb{S}^{n-1}}\left(h_K^{p-1}(u^+)-h_K^{p-1}(u^-)\right)u_nd\sigma\right)\\
&=\sgn (p(1-p)).
\end{align*}
To prove that $e_p^{\varphi}\in\operatorname{int}K$, notice that when $-n\leq p\leq -n+1:$
\[\lim_{x\to\partial K}\int_{\mathbb{S}^{n-1}}\frac{(h_K(u)-x\cdot u)^p}{\varphi(u)}d\sigma=+\infty,\]
while by the Blaschke-Santal\'{o} inequality, the infimum is finite.
\end{proof}
\begin{remark}
In the sequel, we will always exclude case $p=1$, unless we are working with origin-symmetric bodies.
\end{remark}
\begin{definition}For $-n\leq p<\infty,~p\neq1$, $e_p(K)$ is the unique point in $\operatorname{int}K$ that satisfies
\[\int_{\mathbb{S}^{n-1}}\frac{u}{\left(h_K(u)-e_p\cdot u\right)^{1-p}}d\sigma(u)=0.\]
When $n\leq p\le -n+1$ and $\varphi\in \mathcal{S}^{+},$ $e_p^{\varphi}(K)$ is the unique point in $\operatorname{int}K$ that satisfies
\[\int_{\mathbb{S}^{n-1}}\frac{u}{\varphi(u)\left(h_K(u)-e_p\cdot u\right)^{1-p}}d\sigma(u)=0.\]
For $-n\leq p<\infty,~ \varphi\in \mathcal{S}^+_e$ and $K\in\mathcal{K}_e^n,$ we define the point $e_p^{\varphi}(K)$ to be the origin.
\end{definition}
\begin{remark}
In general, for $-n+1<p<\infty$, a minimizing or maximizing point of $\int_{\mathbb{S}^{n-1}}\frac{(h_K(u)-x\cdot u)^p}{\varphi(u)}d\sigma$ may fail to be in the interior of $K.$
\end{remark}
\begin{definition}
Let $-n\leq p<\infty.$
Since $K$ satisfies $\int\frac{u}{(h_K(u)-e_p\cdot u)^{1-p}(u)}d\sigma=0,$ the indefinite $\sigma$-integral of
$(h_K(u)-e_p\cdot u)^{p-1}$ satisfies the sufficiency condition of Minkowski's existence theorem in $\mathbb{R}^n.$ Hence, there exists a unique convex body (up to translations), denoted by $\Lambda_pK$, which has a surface area measure that satisfies
\begin{equation}\label{def: p curvature}
dS_{\Lambda_pK}=\left(\frac{V(K)}{\frac{1}{n}\int_{\mathbb{S}^{n-1}}h_{K-e_p}^pd\sigma}\right)\frac{1}{h_{K-e_p}^{1-p}}d\sigma,
\end{equation}
see Theorem 4 of \cite{ChYau}.
In addition, when $-n\leq p\leq-n+1~\mbox{and}~ \varphi\in \mathcal{S}^+$, or $-n\leq p<\infty,~\varphi\in \mathcal{S}^+_e~\mbox{and}~K\in\mathcal{K}_e^n,$ we define $\Lambda^{\varphi}_{p}K$ as a convex body with positive curvature function
\begin{equation}\label{def: p phi curvature}
f_{\Lambda_p^{\varphi}K}=\left(\frac{V(K)}{\frac{1}{n}\int_{\mathbb{S}^{n-1}}\frac{h_{K-e_p^{\varphi}}^p}{\varphi}d\sigma}\right)\frac{1}{\varphi h_{K-e_p^{\varphi}}^{1-p}}.
\end{equation}
\end{definition}
Notice that when $p=1$, $\Lambda_1^{\varphi}K$ is a ball. We point out that our definition of $\Lambda_p$ differs considerably from the usual definition of Lutwak (see \cite[p. 554]{Schneider}), but agrees with Petty's definition when $p=-n$ \cite{Petty}.
In the sequel, we will assume, after translation, that $\Lambda_pK$ and $\Lambda_p^{\varphi}K$ have the same centroids as $K.$ That is, $\operatorname{cent}(K)=\operatorname{cent}(\Lambda_pK)=\operatorname{cent}(\Lambda_p^{\varphi}K).$
\begin{remark}\label{rem: equality mixed minkowski}
From the definition of the mixed volume, we have $V_1(\Lambda_pK,K)=V(K).$ As a result, by the Minkowski mixed volume inequality
$ V(K)\geq V(\Lambda_pK).$ Equality holds if and only if $\Lambda_pK=K.$
Using Minkowski's mixed volume inequality once more, we get
\begin{equation}\label{ie: Minkowski}
V_1(K,\Lambda_pK)\geq V(\Lambda_pK).
\end{equation}
Here the equality holds if and only if $\Lambda_pK=K.$ Similarly if $-n\leq p\leq-n+1$ and $\varphi\in \mathcal{S}^+$, or $-n\leq p<\infty,~ \varphi\in \mathcal{S}^+_e$ and $K\in\mathcal{K}_e^n,$   we get
\begin{equation}\label{ie: Minkowski phi}
V_1(K,\Lambda_p^{\varphi}K)\geq V(\Lambda_p^{\varphi}K),
\end{equation}
and the equality holds if and only if $\Lambda_p^{\varphi}K=K.$
\end{remark}
\begin{remark}\label{rem: rem}
If $K\in \mathcal{F}^n$, then by definition $K$ is of class $ C^{\infty}_+$, so $h_K\in C^{\infty}$. In fact, by definition of the class $ C^{\infty}_+$, the Gauss map $\nu$ is a diffeomorphism of class $ C^{\infty}$ and so $h_K(u)=\nu^{-1}(u)\cdot u$ is $ C^{\infty}$. In this case, since $\Lambda_p^{\varphi} K,~\Lambda_p K$ are solutions of the Minkowski problem with positive $ C^{\infty}$ prescribed data, $\Lambda_p^{\varphi} K$ and $\Lambda_p K$ are of class $ C^{\infty}_{+}$; see Cheng-Yau \cite[Theorem 1]{ChYau}.
\end{remark}
\begin{definition}
\[
\mathcal{A}_p(K):=\begin{cases}
V(K)\left(\int_{\mathbb{S}^{n-1}}(h_K(u)-e_p\cdot u)^pd\sigma\right)^{-\frac{n}{p}} &\mbox{if } -n\leq p<\infty~\&~p\neq0\\
V(K)\exp\left(\frac{\int_{\mathbb{S}^{n-1}}-\log (h_K(u)-e_0\cdot u)d\sigma}{\omega_n}\right) & \mbox{if } p=0.
\end{cases}
\]
For $-n\leq p\leq-n+1,~\varphi\in \mathcal{S}^+$, or $-n\leq p<\infty,~p\neq0,~\varphi\in \mathcal{S}^+_e~\mbox{and}~K\in\mathcal{K}_e^n,$
\[
\mathcal{A}_p^{\varphi}(K):=
V(K)\left(\int_{\mathbb{S}^{n-1}}\frac{(h_K(u)-e_p\cdot u)^p}{\varphi(u)}d\sigma\right)^{-\frac{n}{p}}.
\]
For $p=0,~\varphi\in \mathcal{S}^+_e~\mbox{and}~K\in\mathcal{K}_e^n,$
\[
\mathcal{A}_0^{\varphi}(K):=
V(K)\exp\left(\frac{\int_{\mathbb{S}^{n-1}}-\frac{\log h_K}{\varphi}d\sigma}{\omega_n}\right).
\]
\end{definition}
Next, we introduce a new family of entropy functionals.
\begin{definition}\label{def: strong entropy}
$\mathcal{B}_p(K):=\frac{V(K)^{n-1}\mathcal{A}_p(K)}{V(\Lambda_pK)^{n-1}}$
and
$\mathcal{B}^{\varphi}_p(K):=\frac{V(K)^{n-1}\mathcal{A}_p^{\varphi}(K)}{V(\Lambda_p^{\varphi}K)^{n-1}}.$
\end{definition}
\begin{remark}
Note that we have $\mathcal{A}_p(K)\leq \mathcal{B}_p(K),$ and $\mathcal{A}^{\varphi}_p(K)\leq \mathcal{B}^{\varphi}_p(K).$ It will be shown that functionals $\frac{\log\mathcal{B}_p}{1-p},~\frac{\log\mathcal{B}_p^{\varphi}}{1-p}$, $p\neq 1$ are strictly increasing unless $K_t$ solves (in the Alexandrov sense) (\ref{def: self similar}). From this point of view, they will play roles in deducing the asymptotic shapes under the flows; see (\ref{eq: key asymp}).
\end{remark}
\section{Long-time existence}\label{sec: Long-time existence}
\begin{lemma}\label{lem: upper}
Let $\{K_t\}$ be a solution of (\ref{eq: flow4}) on $[0,t_0]$. If $c_1\leq h_{K_t}\leq c_2$ on $[0,t_0]$, then $\mathcal{K}\geq \frac{1}{a+b t^{-\frac{n-1}{n}}}$ on $(0,t_0],$ where $a$ and $b$ depend only on $c_1,c_2,p,\varphi.$ In particular, $\mathcal{K}\ge c_4$ on $[0,t_0]$ for some positive finite number that depends on the initial data, $c_1,c_2,p,\varphi$ and is independent of $t_0.$
\end{lemma}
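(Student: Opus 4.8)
The plan is to pass to the polar bodies $K_t^{\ast}$, where the flow becomes a contraction driven by the Gauss curvature, and to run a Tso-type maximum‑principle argument there. In the notation of Lemma~\ref{lem: ev polar}, writing $h^{\ast}(\cdot,t)=h_{K_t^{\ast}}$, the polar flow has the form
\[
\partial_t h^{\ast}=-\Phi\,\mathcal K^{\ast},\qquad \mathcal K^{\ast}=\frac1{\det_{\bar g}[\bar\nabla_i\bar\nabla_j h^{\ast}+h^{\ast}\bar g_{ij}]},
\]
where $\Phi=\Phi(u,h^{\ast},\bar\nabla h^{\ast})=\varphi(N)\,(h^{\ast2}+|\bar\nabla h^{\ast}|^2)^{\frac{n+p}2}/h^{\ast n}>0$ with $N$ the unit vector of that lemma; this is (forward) parabolic with linearization $\mathcal L^{\ast}:=\Phi(S_{n-1}^{\ast})^{-2}S_{n-1}^{\ast ij}\bar\nabla_i\bar\nabla_j$. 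Since $c_1\le h\le c_2$ gives $c_1B\subseteq K_t\subseteq c_2B$, we have $c_1\le r_{K_t}\le c_2$, hence $h^{\ast}=1/r_{K_t}\in[c_2^{-1},c_1^{-1}]$ and $r_{K_t^{\ast}}=1/h\in[c_2^{-1},c_1^{-1}]$, so $|\bar\nabla h^{\ast}|^2=r_{K_t^{\ast}}^2-h^{\ast2}\le c_1^{-2}$. Thus $0<\Phi_{\min}\le\Phi\le\Phi_{\max}$ and the first derivatives of $\Phi$ in $h^{\ast},\bar\nabla h^{\ast}$ are bounded, all bounds depending only on $c_1,c_2,p,\varphi$.

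Set $\sigma^{\ast}:=-\partial_t h^{\ast}=\Phi\mathcal K^{\ast}>0$. Differentiating, using $\partial_t\mathfrak r^{\ast}_{ij}=-\bar\nabla_i\bar\nabla_j\sigma^{\ast}-\sigma^{\ast}\bar g_{ij}$, $\partial_t\bar\nabla h^{\ast}=-\bar\nabla\sigma^{\ast}$, Euler's relation $S_{n-1}^{\ast ij}\mathfrak r^{\ast}_{ij}=(n-1)S_{n-1}^{\ast}$ and $S_{n-1}^{\ast ij}\bar g_{ij}=S_{n-1}^{\ast}H^{\ast\ast}$ (with $H^{\ast\ast}:=\sum_i\kappa_i^{\ast}$, $\kappa_i^{\ast}=1/\lambda_i^{\ast}$, $\prod_i\kappa_i^{\ast}=\mathcal K^{\ast}$), one arrives at an equation of the form
\[
\partial_t\sigma^{\ast}=\mathcal L^{\ast}\sigma^{\ast}+\langle W,\bar\nabla\sigma^{\ast}\rangle+H^{\ast\ast}\sigma^{\ast2}+\beta\,\sigma^{\ast2},\qquad |W|\le C\sigma^{\ast},\ |\beta|\le C,
\]
with $C=C(c_1,c_2,p,\varphi)$; the transport and zeroth‑order terms come from the dependence of $\Phi$ on $h^{\ast},\bar\nabla h^{\ast}$. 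Fix $a_0:=\tfrac12 c_2^{-1}<\inf h^{\ast}$, put $w:=h^{\ast}-a_0$ (bounded between positive constants), and consider $\Theta^{\ast}:=\sigma^{\ast}/w$. Using $\partial_t w=-\sigma^{\ast}$ and $\mathcal L^{\ast}w=\sigma^{\ast}[(n-1)-h^{\ast}H^{\ast\ast}]$, a direct computation shows that at an interior spatial maximum of $\Theta^{\ast}$ — where $\bar\nabla\Theta^{\ast}=0$ (so $\bar\nabla\sigma^{\ast}=\Theta^{\ast}\bar\nabla h^{\ast}$) and $\mathcal L^{\ast}\Theta^{\ast}\le0$ — the first‑ and zeroth‑order terms are absorbed, by Cauchy--Schwarz against $S_{n-1}^{\ast ij}$, into $O(\Theta^{\ast2})$, while the $H^{\ast\ast}$‑contributions combine into $(w-h^{\ast})H^{\ast\ast}\Theta^{\ast2}=-a_0 H^{\ast\ast}\Theta^{\ast2}$, so that
\[
\partial_t\Theta^{\ast}\ \le\ C_1\Theta^{\ast2}-a_0 H^{\ast\ast}\Theta^{\ast2},\qquad C_1=C_1(c_1,c_2,p,\varphi).
\]

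The last term is decisive: by the AM--GM inequality $H^{\ast\ast}\ge(n-1)(\mathcal K^{\ast})^{\frac1{n-1}}=(n-1)(\sigma^{\ast}/\Phi)^{\frac1{n-1}}\ge c_{\ast}\Theta^{\ast\frac1{n-1}}$, so $f(t):=\max_{\mathbb S^{n-1}}\Theta^{\ast}(\cdot,t)$ satisfies $f'\le C_1 f^2-C_2 f^{2+\frac1{n-1}}$ a.e.\ on $(0,t_0]$, where $C_2:=a_0 c_{\ast}>0$. When $f\ge A:=(2C_1/C_2)^{n-1}$ the negative term dominates, giving $f'\le-\tfrac{C_2}2 f^{2+\frac1{n-1}}$ and hence $\frac{d}{dt}\big(f^{-\frac n{n-1}}\big)\ge\frac{nC_2}{2(n-1)}$; on the largest subinterval ending at $t$ on which $f>A$ the left endpoint cannot be a time where $f=A$ (that would force $f(t)<A$), so it is $0$, and letting the left end tend to $0^{+}$ yields
\[
\Theta^{\ast}(u,t)\le f(t)\le A+\Bigl(\tfrac{nC_2}{2(n-1)}t\Bigr)^{-\frac{n-1}n},\qquad (u,t)\in\mathbb S^{n-1}\times(0,t_0].
\]
Hence $\mathcal K^{\ast}=\sigma^{\ast}/\Phi\le w\Theta^{\ast}/\Phi_{\min}\le A''+B''t^{-\frac{n-1}n}$, with $A'',B''$ depending only on $c_1,c_2,p,\varphi$. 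By the identity $\tfrac{h_K^{n+1}}{\mathcal K}(x)\,\tfrac{h_{K^{\ast}}^{n+1}}{\mathcal K^{\ast}}(x^{\ast})=1$ from the proof of Theorem~\ref{thm: eigenvalue relation body and it polar} (with $x\cdot x^{\ast}=1$), and $h_K h_{K^{\ast}}=h_K/r_K\in[c_1/c_2,c_2/c_1]$ at corresponding points, we get $\mathcal K=(h_K h_{K^{\ast}})^{n+1}/\mathcal K^{\ast}\ge (c_1/c_2)^{n+1}/(A''+B''t^{-\frac{n-1}n})=1/(a+b\,t^{-\frac{n-1}n})$ with $a,b$ depending only on $c_1,c_2,p,\varphi$. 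For the final statement, $K_0\in\mathcal F_0^n$ being smooth and strictly convex makes $\mathcal K^{\ast}(\cdot,0)$ and hence $f(0)$ finite, and then $f'\le C_1 f^2-C_2 f^{2+\frac1{n-1}}$ forces $f(t)\le\max\{f(0),(C_1/C_2)^{n-1}\}$ for all $t\in[0,t_0]$, so $\mathcal K\ge c_4>0$ on $[0,t_0]$ with $c_4$ depending only on the initial data and $c_1,c_2,p,\varphi$, not on $t_0$.

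I expect the genuinely delicate point to be the passage from $\partial_t\sigma^{\ast}$ to the differential inequality for $\Theta^{\ast}$: because $\Phi$ depends on $h^{\ast}$ and $\bar\nabla h^{\ast}$, time‑differentiation really does produce the transport term $\langle W,\bar\nabla\sigma^{\ast}\rangle$ and the zeroth‑order term $\beta\sigma^{\ast2}$, and one must check carefully that, after substituting $\bar\nabla\sigma^{\ast}=\Theta^{\ast}\bar\nabla h^{\ast}$ at a spatial maximum and applying Cauchy--Schwarz against the positive tensor $S_{n-1}^{\ast ij}$, these contribute only $O(\Theta^{\ast2})$ and in particular leave intact the sign and the power of the term $-a_0 H^{\ast\ast}\Theta^{\ast2}\le -C_2\Theta^{\ast2+1/(n-1)}$, which is what produces the sharp exponent $(n-1)/n$.
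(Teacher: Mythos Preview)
Your argument is correct and is precisely the approach the paper intends: the paper's proof consists of the sentence ``Applying Tso's trick to the evolution equation for polar bodies, Lemma~\ref{lem: ev polar}, as in the proof of \cite[Lemma~4.3]{Ivaki Proc}'', and you have written out exactly that computation --- passing to $K_t^{\ast}$, forming the Tso quotient $\Theta^{\ast}=\Phi\mathcal K^{\ast}/(h^{\ast}-a_0)$, deriving $\partial_t\Theta^{\ast}\le C_1\Theta^{\ast2}-a_0H^{\ast\ast}\Theta^{\ast2}$ at a spatial maximum, using AM--GM to get the superquadratic decay, and transferring back via the polar curvature identity. The only cosmetic difference is in the ``in particular'' clause: the paper splits $[0,t_0]=[0,\delta]\cup[\delta,t_0]$ and invokes short-time existence on the first piece, whereas you observe directly that $f'\le C_1f^2-C_2f^{2+1/(n-1)}$ with $f(0)<\infty$ forces $f\le\max\{f(0),(C_1/C_2)^{n-1}\}$ for all time, which is equally valid.
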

\begin{proof}
Applying Tso's trick to the evolution equation for polar bodies, Lemma \ref{lem: ev polar}, as in the proof of \cite[Lemma 4.3]{Ivaki Proc} gives
$\mathcal{K}\geq \frac{1}{a+b t^{-\frac{n-1}{n}}}$
on $(0,t_0].$ It also follows from the proof of \cite[Lemma 4.3]{Ivaki Proc} that $a$ and $b$ depend only on $c_1,c_2,p,\varphi.$ The lower bound for $\mathcal{K}$ on $[0,\delta]$ for a small enough $\delta>0$ follows from the short-time existence of the flow. The lower bound for $\mathcal{K}$ on $[\delta,t_0]$ follows from the inequality $\mathcal{K}\geq \frac{1}{a+b \delta^{-\frac{n-1}{n}}}.$
\end{proof}
\begin{lemma}\label{lem: lower}
Let $\{K_t\}$ be a solution of (\ref{eq: flow4}) on $[0,t_0]$. If $0<c_2\leq h_{K_t}\leq c_1<\infty$ on $[0,t_0]$, then $\mathcal{K}\leq c_3<\infty$ on $[0,t_0].$ Here $c_3$ depends on the initial data, $c_1,c_2,p,\varphi$ and $t_0.$
\end{lemma}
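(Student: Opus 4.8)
The plan is to run a one–term maximum principle on the reciprocal of the normal speed. Write $Z:=\varphi h^{2-p}/\mathcal{K}=\partial_t h$ for the speed of (\ref{eq: flow4}) (so $Z=\varphi h^{2-p}S_{n-1}$), and set $U:=1/Z=\mathcal{K}/(\varphi h^{2-p})$; this is a smooth positive function on $\mathbb{S}^{n-1}\times[0,t_0]$. Since $c_2\le h\le c_1$ and $\varphi$ is a fixed positive function on the compact sphere, $\varphi h^{2-p}$ is bounded above and below by positive constants depending only on $c_1,c_2,p,\varphi$; hence it is enough to bound $U$ from above, after which $\mathcal{K}=U\varphi h^{2-p}\le c_3$.

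First I would compute the evolution equation of $U$. From $\mathfrak{r}_{ij}=\bar{\nabla}_i\bar{\nabla}_j h+h\bar{g}_{ij}$ one gets $\partial_t\mathfrak{r}_{ij}=\bar{\nabla}_i\bar{\nabla}_j Z+Z\bar{g}_{ij}$, and since $S_{n-1}=\det_{\bar g}[\mathfrak{r}_{ij}]$ and $\bar{g}_{ij}\mathfrak{r}^{ij}=\sum_i\kappa_i$ (here $[\mathfrak{r}^{ij}]$ is the inverse of $[\mathfrak{r}_{ij}]$), this yields $\partial_t S_{n-1}=S_{n-1}\mathfrak{r}^{ij}\bar{\nabla}_i\bar{\nabla}_j Z+S_{n-1}Z\sum_i\kappa_i$. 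Feeding this and $\partial_t h=Z$ into $\partial_t U=-\partial_t Z/Z^2$ and replacing every occurrence of $\varphi h^{2-p}S_{n-1}$ by $Z$, a short calculation gives
\[
\partial_t U=\frac{1}{U}\,\mathfrak{r}^{ij}\bar{\nabla}_i\bar{\nabla}_j U-\frac{2}{U^2}\,\mathfrak{r}^{ij}\bar{\nabla}_i U\,\bar{\nabla}_j U-\frac{2-p}{h}-\sum_i\kappa_i.
\]
Passing to $U$ rather than working with $\mathcal{K}$ directly is convenient precisely because all terms containing spatial derivatives of $\varphi$ and $h$ are absorbed into the first two (elliptic) terms.

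Next I would apply Hamilton's trick to $t\mapsto\max_{\mathbb{S}^{n-1}}U(\cdot,t)$. At a spatial maximizer one has $\bar{\nabla}U=0$ and $\bar{\nabla}^2U\le0$, and $[\mathfrak{r}^{ij}]$ is positive definite by strict convexity of $K_t$, so the first two terms are $\le0$; also $-(2-p)/h\le|2-p|/c_2$. For the decisive term, AM--GM gives $\sum_i\kappa_i\ge(n-1)\mathcal{K}^{1/(n-1)}=(n-1)(U\varphi h^{2-p})^{1/(n-1)}\ge(n-1)\delta^{1/(n-1)}U^{1/(n-1)}$, where $\delta:=\big(\min_{\mathbb{S}^{n-1}}\varphi\big)\min\{c_1^{2-p},c_2^{2-p}\}>0$ depends only on $c_1,c_2,p,\varphi$. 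Hence $\frac{d}{dt}\max U\le A-B(\max U)^{1/(n-1)}$ with $A=|2-p|/c_2$, $B=(n-1)\delta^{1/(n-1)}$, so $\max_{\mathbb{S}^{n-1}}U(\cdot,t)\le\max\{\max_{\mathbb{S}^{n-1}}U(\cdot,0),(A/B)^{n-1}\}$ for every $t\in[0,t_0]$. This bounds $\mathcal{K}$ on $[0,t_0]$ by a constant $c_3$ depending only on the initial data, $c_1,c_2,p,\varphi$.

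I do not expect a genuine obstacle; this is a standard single–term maximum principle. The two points demanding care are (i) the bookkeeping in the evolution equation of $U$, in particular confirming that the derivatives of $\varphi$ and $h$ really recombine into $\mathfrak{r}^{ij}\bar{\nabla}_i\bar{\nabla}_j U$ and $\mathfrak{r}^{ij}\bar{\nabla}_i U\bar{\nabla}_j U$ with no unsigned leftover, and (ii) verifying that $-\sum_i\kappa_i$ is the dominant reaction term — which is exactly where strict convexity ($[\mathfrak{r}^{ij}]>0$) and the AM--GM estimate bounding $\sum_i\kappa_i$ below by a positive power of $\mathcal{K}$, hence of $U$, are used.
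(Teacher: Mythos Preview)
Your argument is correct, and in fact it yields a bound on $\mathcal{K}$ that is \emph{independent of $t_0$}, which is slightly stronger than the lemma as stated. The evolution equation you wrote for $U=\mathcal{K}/(\varphi h^{2-p})$ checks out: starting from $\partial_t Z=(2-p)Z^2/h+Z\,\mathfrak{r}^{ij}\bar{\nabla}_i\bar{\nabla}_j Z+Z^2\sum_i\kappa_i$ and substituting $Z=1/U$, the derivatives of $\varphi$ and $h$ are indeed entirely absorbed into the spatial derivatives of $Z$, leaving only the clean reaction terms $-(2-p)/h-\sum_i\kappa_i$. The AM--GM estimate then makes $-\sum_i\kappa_i$ coercive in $U$, and the barrier $\max\{\max U(\cdot,0),(A/B)^{n-1}\}$ follows.

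The paper proceeds differently: it applies Tso's trick to the speed itself, working with the auxiliary quantity $W:=\dfrac{\varphi h^{2-p}/\mathcal{K}}{2c_1-h}$ and deriving the differential inequality $\partial_t W\ge -c'W^2$ at a spatial minimum. Integrating this gives a lower bound on $W$ (hence on $Z$, hence an upper bound on $\mathcal{K}$) that degenerates linearly in $t$, which is why $c_3$ in the paper depends on $t_0$. Your route avoids the barrier function $2c_1-h$ altogether because, after passing to $U$, the term $-\sum_i\kappa_i$ already supplies the needed decay without any help from the support function; the price is that you must recognize the AM--GM link between $\sum_i\kappa_i$ and $\mathcal{K}^{1/(n-1)}$, which Tso's trick does not need. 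Either approach suffices for the uses made of this lemma later in the paper.
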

\begin{proof}
We apply Tso's trick to the speed of (\ref{eq: flow4}) as in the proof of \cite[Lemma 4.1]{Ivaki Proc} to get \[\partial_t\frac{\varphi\frac{h^{2-p}}{\mathcal{K}}}{2c_1-h}\geq -c'\left(\frac{\varphi\frac{h^{2-p}}{\mathcal{K}}}{2c_1-h}\right)^2,\]
where $c'>0$ depends only on $c_1,c_2,p,\varphi.$
Therefore, \[\frac{\varphi\frac{h^{2-p}}{\mathcal{K}}}{2c_1-h}(t, u)\geq \frac{1}{c't+1/\min\limits_{u\in\mathbb{S}^{n-1}}\frac{\varphi\frac{h^{2-p}}{\mathcal{K}}}{2c_1-h}(0, u)}\geq \frac{1}{c't_0+1/\min\limits_{u\in\mathbb{S}^{n-1}}\frac{\varphi\frac{h^{2-p}}{\mathcal{K}}}{2c_1-h}(0, u)}.\]
The corresponding claim for the Gauss curvature follows.
\end{proof}

These last two lemmas are enough to establish the long-time existence of solutions to (\ref{eq: flow4}) when the initial body is in $\mathcal{F}^2_0.$
\begin{lemma}\label{lem: time singularity}
If $-2\leq p<2$ and $K_0\in\mathcal{F}^2_0$, then the lifespan of the solution to (\ref{eq: flow4}) is finite, and infinite when $p\ge 2$.
\end{lemma}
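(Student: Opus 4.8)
The plan is to track the extremal values of the support function $h(\cdot,t)$ along the flow \eqref{eq: flow4} and show that, depending on the sign of $p-2$, either the outradius stays bounded for all time (forcing $T=\infty$) or the inradius (equivalently $\min h$, up to the origin staying inside) degenerates in finite time. The starting observation is the classical one that $h_{\max}(t):=\max_u h(u,t)$ and $h_{\min}(t):=\min_u h(u,t)$ are locally Lipschitz in $t$, and at a point $u_0$ where the spatial maximum is attained one has $\bar\nabla^2 h(u_0,t)\le 0$, hence $\mathfrak r_{ij}=\bar\nabla_i\bar\nabla_j h+h\bar g_{ij}\le h_{\max}\bar g_{ij}$ there, so $\mathcal K=1/\det_{\bar g}[\mathfrak r_{ij}]\ge h_{\max}^{-(n-1)}=h_{\max}^{-1}$ in $\mathbb R^2$; symmetrically $\mathcal K\le h_{\min}^{-1}$ at an interior spatial minimum. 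Feeding these into \eqref{eq: flow4} with $n=2$ gives differential inequalities for the extremal radii of the form
\begin{align*}
\frac{d}{dt}h_{\max}\le \max_{\mathbb{S}^1}\varphi\cdot h_{\max}^{2-p}\cdot h_{\max}=\Bigl(\max_{\mathbb{S}^1}\varphi\Bigr) h_{\max}^{3-p},\qquad
\frac{d}{dt}h_{\min}\ge \Bigl(\min_{\mathbb{S}^1}\varphi\Bigr) h_{\min}^{3-p}.
\end{align*}

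\emph{Case $p\ge2$ (lifespan infinite).} Here $3-p\le 1$, so the ODE comparison $y'\le C y^{3-p}$ has solutions that exist for all time: if $3-p\le 0$ then $h_{\max}$ grows at most linearly (or stays bounded when $p>3$), and if $0<3-p\le 1$ the superlinearity is too weak to blow up. Thus $h_{\max}(t)$ stays finite on every finite interval, i.e. the bodies $K_t$ have a uniform outradius bound on $[0,T')$ for each $T'<\infty$. For the inradius, note the speed in \eqref{eq: flow4} is positive, so the bodies are expanding; one still needs a positive lower bound on $h_{\min}$ to stay away from the origin, but since $K_0\in\mathcal F_0^2$ contains the origin and the flow is expansion in the outward normal direction, $K_t\supseteq K_0$, so $h_{\min}(t)\ge h_{\min}(0)>0$ for as long as the solution exists (the origin is enclosed throughout). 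Combining a two-sided bound on $h$ with Lemmas \ref{lem: upper} and \ref{lem: lower}, one gets two-sided bounds on $\mathcal K$ on any finite time interval, whence (by the standard Krylov--Safonov / Schauder bootstrap for this parabolic Monge--Amp\`ere-type equation) uniform $C^\infty$ bounds; the usual continuation criterion then forces $T=\infty$.

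\emph{Case $-2\le p<2$ (lifespan finite).} Now $3-p>1$, and the differential inequality $\frac{d}{dt}h_{\min}\ge c\, h_{\min}^{3-p}$ with $c=\min_{\mathbb S^1}\varphi>0$ and exponent $3-p>1$ forces $h_{\min}(t)$ to blow up in finite time: $h_{\min}(t)\ge\bigl(h_{\min}(0)^{-(2-p)}-c(2-p)t\bigr)^{-1/(2-p)}\to\infty$ as $t\uparrow t^\ast$ for some finite $t^\ast$. Since $h_{\min}(t)\le h_{\max}(t)$, the support function cannot stay bounded, so the solution cannot be extended past $t^\ast$; hence $T\le t^\ast<\infty$.

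\emph{Main obstacle.} The computational heart is clean; the delicate point is justifying the continuation/termination dichotomy rigorously. For $p\ge 2$ I must be sure that the only obstruction to long-time existence is loss of a two-sided bound on $h$ (so that the outradius bound, plus Lemmas \ref{lem: upper}--\ref{lem: lower}, genuinely closes the argument): this uses that $\mathcal F_0^2$ is preserved, that the origin stays interior, and that the linearized operator is uniformly parabolic once $c_1\le h\le c_2$ and $0<\mathcal K\le c_3$, which is where the regularity theory for this equation (as in \cite{Ivaki Proc}) is invoked. For $-2\le p<2$, the subtlety is merely that $h_{\min}$ as defined measures distance from the \emph{origin} to the support hyperplanes, not the true inradius; but because the body is expanding and contains the origin, $\min h$ is a legitimate lower bound on the support function, and its blow-up is exactly an obstruction to existence. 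I do not expect either point to require more than the machinery already assembled in the excerpt.
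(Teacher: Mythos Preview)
Your proof is correct and is essentially the same argument as the paper's. The paper phrases the ODE comparison geometrically---an inscribed origin-centered disk (evolved with $\min\varphi$) blows up in finite time when $p<2$, and an enclosing origin-centered disk (evolved with $\max\varphi$) stays finite on finite intervals when $p\ge 2$---but tracking the radii of those disks is exactly your maximum-principle bounds $\frac{d}{dt}h_{\min}\ge(\min\varphi)h_{\min}^{3-p}$ and $\frac{d}{dt}h_{\max}\le(\max\varphi)h_{\max}^{3-p}$; the continuation step via Lemmas~\ref{lem: upper}--\ref{lem: lower} and parabolic regularity is identical.
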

\begin{proof}
Let $-2\leq p<2.$ We can put a tiny disk centered at the origin inside $K_0.$ This disk flows to infinity in finite time, so by comparison principle $K_t$ cannot exist eternally.
For $p>2$, consider an origin-centered disk $B_R$, such that $K_0\subset B_R.$ Then $K_t\subset B_{R(t)},$ where $R(t)=\left((\max h_{K_0})^{p-2}+t(p-2)\max \varphi \right)^{\frac{1}{p-2}}.$ Thus, for any finite time $t_0$, $\{h_{K_t}\}$ remains uniformly bounded on $[0,t_0]$. So by Lemmas \ref{lem: upper}, \ref{lem: lower} the evolution equation (\ref{eq: flow4}) is uniformly parabolic on $[0,t_0]$ and bounds on higher derivatives of the support function follow. Therefore, we can extend the solution smoothly past time $t_0.$ When $p=2$ the argument is similar.
\end{proof}
\begin{proposition}\label{prop: expansion to infty}
Let $K_0\in\mathcal{F}^2_0$. Then the solution to (\ref{eq: flow4}) satisfies $\lim\limits_{t\to T}\max h_{K_t}=\infty.$
\end{proposition}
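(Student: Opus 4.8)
The plan is to treat the two regimes of Lemma~\ref{lem: time singularity} separately and argue by contradiction in each. First observe that the speed in (\ref{eq: flow4}) is strictly positive, so for every $u$ the map $t\mapsto h_{K_t}(u)$ is strictly increasing; hence $K_0\subseteq K_t$, $\min_{\mathbb{S}^1}h_{K_t}\ge\min_{\mathbb{S}^1}h_{K_0}=:c_2>0$ for all $t\in[0,T)$, and $t\mapsto\max_{\mathbb{S}^1}h_{K_t}$ is nondecreasing, so $\lim_{t\to T}\max h_{K_t}$ exists in $(0,\infty]$. It therefore suffices to show that this limit cannot be a finite number $c_1$.

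Suppose first $-2\le p<2$, so $T<\infty$ by Lemma~\ref{lem: time singularity}, and suppose for contradiction that $\max h_{K_t}\le c_1$ on $[0,T)$. Then $c_2\le h_{K_t}\le c_1$ on $[0,T)$, so Lemma~\ref{lem: upper} gives $\mathcal{K}\ge c_4>0$ and Lemma~\ref{lem: lower} gives $\mathcal{K}\le c_3<\infty$ there; the dependence of $c_3$ on $T$ is immaterial because $T<\infty$. In the plane the single radius of curvature is $\mathfrak{r}_{11}=\bar\nabla^2h+h=1/\mathcal{K}$, so the bounds on $h$ and $\mathcal{K}$ force a uniform bound on $\bar\nabla^2h$; consequently (\ref{eq: flow4}), written as $\partial_th=\varphi h^{2-p}(\bar\nabla^2h+h)$, is uniformly parabolic on $\mathbb{S}^1\times[0,T)$ with coefficients bounded in $C^k$ for every $k$. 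Standard interior parabolic estimates (a Krylov--Safonov type H\"older estimate, then Schauder bootstrapping) then give uniform $C^\infty$ bounds for $h(\cdot,t)$ on $[0,T)$, so $h(\cdot,t)$ converges in $C^\infty$ as $t\to T$ to the support function of a body $K_T\in\mathcal{F}^2_0$ (strict convexity follows from $\mathcal{K}\ge c_4>0$, and $0\in\operatorname{int}K_T$ from $h\ge c_2>0$). By short-time existence the flow can then be restarted from $K_T$, extending the solution past $T$ and contradicting the maximality of $T$. Hence $\lim_{t\to T}\max h_{K_t}=\infty$.

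Now suppose $p\ge 2$, so $T=\infty$ by Lemma~\ref{lem: time singularity}. Since $K_0\in\mathcal{F}^2_0$, pick $r_0>0$ with $B_{r_0}\subset\operatorname{int}K_0$ and let $\rho(t)$ solve $\dot\rho=(\min_{\mathbb{S}^1}\varphi)\,\rho^{3-p}$ with $\rho(0)=r_0$. The speed $\varphi h^{2-p}/\mathcal{K}$ is nonincreasing in $\mathcal{K}$, and the origin-centered disk of radius $\rho(t)$ has constant support function $\rho(t)$ and Gauss curvature $1/\rho(t)$ with $\dot\rho\le\varphi(u)\rho^{3-p}=\varphi(u)\rho^{2-p}/(1/\rho)$, so this disk is a subsolution of (\ref{eq: flow4}); by the comparison (avoidance) principle it stays inside $K_t$, i.e.\ $\rho(t)\le h_{K_t}(u)$ for all $u$ and $t$. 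Since $3-p\le1$ the ODE has a global solution with $\rho(t)\to\infty$ as $t\to\infty$ --- explicitly $\rho(t)=(r_0^{p-2}+(p-2)(\min\varphi)t)^{1/(p-2)}$ if $p>2$ and $\rho(t)=r_0e^{(\min\varphi)t}$ if $p=2$. Therefore $\max h_{K_t}\ge\rho(t)\to\infty$ as $t\to T=\infty$.

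The main work, and the only genuine obstacle, is the parabolic-regularity step in the first case: one must promote the $L^\infty$ control of $h$ and $\mathcal{K}$ (hence of $\bar\nabla^2h$) to uniform $C^\infty$ control up to time $T$ in order to contradict maximality. Note that this route is available precisely when $T<\infty$, because the upper bound for $\mathcal{K}$ from Lemma~\ref{lem: lower} degenerates as $t_0\to\infty$; this is why the disk-barrier argument, not the extension argument, is what handles $p\ge2$. Checking that the origin-centered disk is a genuine barrier is routine given the avoidance principle for convex expanding flows, as already invoked in the proof of Lemma~\ref{lem: time singularity}.
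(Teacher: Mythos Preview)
Your proof is correct and follows essentially the same route as the paper: split into the finite-time case $-2\le p<2$ (bounded $h$ plus Lemmas~\ref{lem: upper} and~\ref{lem: lower} give uniform parabolicity, Krylov--Safonov and standard theory extend the solution past $T$, contradiction) and the infinite-time case $p\ge 2$ (an inscribed origin-centered disk used as a subsolution expands to infinity by comparison). The paper's proof is a bit terser but structurally identical; your added remarks on monotonicity of $h$ and the explicit ODE for the barrier radius are correct embellishments.
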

\begin{proof}
First, let $p> 2.$ In this case the flow exists on $[0,\infty).$ For this reason, we may insert a tiny disk inside of $K_0$ and use the comparison principle to prove the claim:
Consider an origin centered disk $B_r$, such that $K_0\supseteq B_r.$ Then $K_t\supseteq B_{r(t)},$ where $r(t)=\left((\min h_{K_0})^{p-2}+t(p-2)\min \varphi \right)^{\frac{1}{p-2}}$ and $B_{r(t)}$ expands to infinity as $t$ approaches $\infty$. When $p=2$ the argument is similar. Second, if $p<2$, then the flow exists only on a finite time interval. If $\max h_{K_t}<\infty$, then by Lemmas \ref{lem: upper} and \ref{lem: lower}, the evolution equation (\ref{eq: flow4}) is uniformly parabolic on $[0,T)$. Thus, the result of Krylov and Safonov \cite{Krylov-Safonov} and standard parabolic theory allow us to extend the solution smoothly past time $T$, contradicting its maximality.
\end{proof}
To prove the long-time existence of solutions to (\ref{eq: flow4}) when $n\ge3$ and $p=-n$, the next step is to obtain lower and upper bounds on the principal curvatures.
To this aim, we will use the evolution equation of $S_1^{\ast}:=\sum \lambda_i^{\ast}.$
\begin{lemma}\label{lem: upper and lower}
Let $n\ge3,~\varphi\equiv1$ and $p=-n$. Assume that $\{K_t\}$ is a solution of (\ref{eq: flow4}) on $[0,t_0].$ If $c_2\leq h_{K_t}\leq c_1$ and $c_4\leq\mathcal{K}\leq c_3$ on $[0,t_0],$ then
$$\frac{1}{C\left(1+t^{-(n-2)}\right)^{n-2}}\leq\kappa_i\leq C\left(1+t^{-(n-2)}\right)$$ on $(0,t_0],$ for some $C>0$ independent of $K_0$ and depending on $c_1,c_2,c_3,c_4,p.$ In particular, $\kappa_i$ are uniformly bounded above and stay uniformly away from zero on $[0,t_0].$
\end{lemma}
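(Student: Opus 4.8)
The plan is to pass to the polar bodies and reduce everything to one scalar estimate. By Lemma~\ref{lem: ev polar}, with $p=-n$ and $\varphi\equiv1$ the factor $(h^{\ast2}+|\bar\nabla h^\ast|^2)^{\frac{n+p}{2}}$ is $\equiv1$, so $K_t^\ast$ solves the contracting flow $\partial_t h^\ast=-Q^\ast$, where $Q^\ast:=\mathcal K^\ast/h^{\ast n}=(h^{\ast n}\det_{\bar g}[\mathfrak r^\ast_{ij}])^{-1}$. From $c_2\le h\le c_1$ one gets $c_2B\subseteq K_t\subseteq c_1B$, hence $1/c_1\le h^\ast\le1/c_2$; combining this with $c_4\le\mathcal K\le c_3$ and the identity $\frac{h_K^{n+1}}{\mathcal K}\frac{h_{K^\ast}^{n+1}}{\mathcal K^\ast}=1$ shows that $\mathcal K^\ast$, and therefore $S_{n-1}^\ast:=\det_{\bar g}[\mathfrak r^\ast_{ij}]=\prod_i\lambda_i^\ast$ and $Q^\ast$, are pinched between positive constants depending only on $c_1,\dots,c_4,n$. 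It then suffices to prove $S_1^\ast:=\sum_i\lambda_i^\ast=\bar\Delta h^\ast+(n-1)h^\ast\le C(1+t^{-(n-2)})$: indeed $\lambda_i^\ast\le S_1^\ast$ and $\lambda_i^\ast=S_{n-1}^\ast/\prod_{j\ne i}\lambda_j^\ast\ge c\,(S_1^\ast)^{-(n-2)}$, so $\kappa_i^\ast=1/\lambda_i^\ast$ is bounded two-sidedly, and Theorem~\ref{thm: eigenvalue relation body and it polar} applied to $K_t^\ast$ (whose support function is pinched and whose polar is $K_t$) then gives $\kappa_i(K_t)\le C\lambda_{\max}^\ast$ on the one hand, and $\kappa_i(K_t)\ge c\,\mathcal K(K_t)/\kappa_{\max}(K_t)^{n-2}$ on the other, which produces exactly the two exponents in the statement.

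Next I would compute the evolution of $S_1^\ast$. From $\partial_t h^\ast=-Q^\ast$, $\partial_t S_1^\ast=-\bar\Delta Q^\ast-(n-1)Q^\ast$. Using $\bar\nabla_i Q^\ast=Q^\ast\bigl(-\tfrac{n}{h^\ast}\bar\nabla_i h^\ast-(\mathfrak r^{\ast-1})^{jk}\bar\nabla_i\mathfrak r^\ast_{jk}\bigr)$, differentiating once more, and invoking the Simons-type identity $\bar\Delta\mathfrak r^\ast_{ij}=\bar\nabla_i\bar\nabla_j S_1^\ast+(n-1)\mathfrak r^\ast_{ij}-S_1^\ast\bar g_{ij}$ on $\mathbb S^{n-1}$, one arrives at
\[
\partial_t S_1^\ast=\mathcal L S_1^\ast-Q^\ast S_1^\ast\,\mathrm{tr}(\mathfrak r^{\ast-1})+\tfrac{n}{h^\ast}Q^\ast S_1^\ast-\bigl(\text{nonnegative gradient terms}\bigr)-2(n-1)Q^\ast,
\]
where $\mathcal L:=Q^\ast(\mathfrak r^{\ast-1})^{jk}\bar\nabla_j\bar\nabla_k$ is elliptic. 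The gradient terms (arising from $\bar\nabla(\mathfrak r^{\ast-1})$ and $|\bar\nabla Q^\ast|^2$) have the favourable sign and can be dropped. The decisive term is $-Q^\ast S_1^\ast\,\mathrm{tr}(\mathfrak r^{\ast-1})$: because $\prod_i\lambda_i^\ast$ is bounded from above, a large $S_1^\ast$ forces $\lambda_{\max}^\ast$ to be large and hence some $\lambda_i^\ast$ to be small, so $\mathrm{tr}(\mathfrak r^{\ast-1})=\sum_i1/\lambda_i^\ast\ge c'(S_1^\ast)^{1/(n-2)}$ and $Q^\ast S_1^\ast\,\mathrm{tr}(\mathfrak r^{\ast-1})\ge c''(S_1^\ast)^{\frac{n-1}{n-2}}$, while $\tfrac{n}{h^\ast}Q^\ast S_1^\ast\le C\,S_1^\ast$ and $Q^\ast$ is bounded.

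I would finish by the maximum principle. Evaluating the displayed identity at a point where $S_1^\ast(\cdot,t)$ attains its spatial maximum (where $\mathcal L S_1^\ast\le0$ and the gradient terms are $\le0$), the function $y(t):=\max_{\mathbb S^{n-1}}S_1^\ast(\cdot,t)$ obeys, in the barrier sense, $y'\le -c''y^{\frac{n-1}{n-2}}+Cy$. Since $\frac{n-1}{n-2}>1$, once $y$ exceeds a threshold $y_0=y_0(c'',C)$ one has $y'\le-\tfrac{c''}{2}y^{\frac{n-1}{n-2}}$, and comparison with $\dot z=-\tfrac{c''}{2}z^{\frac{n-1}{n-2}}$ (whose solutions satisfy $z(t)\le Ct^{-(n-2)}$) yields $y(t)\le y_0+Ct^{-(n-2)}\le C_1(1+t^{-(n-2)})$ on $(0,t_0]$, with $C_1$ independent of $K_0$. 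Plugging this into the first paragraph gives the claimed bounds on $(0,t_0]$; for the ``in particular'' part these are uniform on each $[\delta,t_0]$, and on $[0,\delta]$ one uses that $K_0\in\mathcal{F}_0^n$ is smooth and strictly convex together with short-time existence.

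The main obstacle is the computation and sign analysis of $\partial_t S_1^\ast$: one has to recognize $-Q^\ast S_1^\ast\,\mathrm{tr}(\mathfrak r^{\ast-1})$ as the dominant reaction term and combine the Simons identity with the a priori bound on $\prod_i\lambda_i^\ast$ to extract the superlinear exponent $\frac{n-1}{n-2}$; the bookkeeping of the lower-order and gradient terms (and checking, e.g.\ against the shrinking sphere, that the coefficients are correct) is where the real work lies, whereas the reduction via polarity and the concluding ODE comparison are routine.
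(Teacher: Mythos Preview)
Your proposal is correct and follows essentially the same strategy as the paper: pass to the polar flow (Lemma~\ref{lem: ev polar}), use the bounds on $h,\mathcal K$ to pin $h^\ast,\mathcal K^\ast,S_{n-1}^\ast$, derive a differential inequality $\partial_t S_1^\ast\le C(1+S_1^\ast)-c\,(S_1^\ast)^{\frac{n-1}{n-2}}$ at the maximum, solve the ODE, and transfer back via Theorem~\ref{thm: eigenvalue relation body and it polar}. The only organizational difference is that the paper keeps the weight $\rho=(h^{\ast2}+|\bar\nabla h^\ast|^2)^{\frac{n+p}{2}}/h^{\ast n}$ explicit, then controls the second-order gradient term through the concavity of $S_{n-1}^{1/(n-1)}$ (inequality~\eqref{ie: second derv}) and absorbs $\bar\nabla\rho\cdot\bar\nabla S_{n-1}^\ast$ by Young's inequality, whereas you specialize to $p=-n$ from the outset, write everything in terms of $Q^\ast=\rho/S_{n-1}^\ast$, and observe directly that the three gradient terms $-|\bar\nabla Q^\ast|^2/Q^\ast$, $-Q^\ast n|\bar\nabla h^\ast|^2/h^{\ast2}$, and $-Q^\ast(\mathfrak r^{\ast-1})^{jp}(\mathfrak r^{\ast-1})^{qk}\bar g^{li}\bar\nabla_l\mathfrak r^\ast_{pq}\bar\nabla_i\mathfrak r^\ast_{jk}$ are each nonpositive (the last by Codazzi symmetry). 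Your route is a bit cleaner for this specific $p$; the paper's is set up to handle more general $\rho$. The key algebraic step---extracting $\mathrm{tr}(\mathfrak r^{\ast-1})\ge c\,(S_1^\ast)^{1/(n-2)}$ from the two-sided bound on $S_{n-1}^\ast$---is the same in both (the paper phrases it via Newton's inequality $S_{n-2}^\ast\ge C\,S_{n-1}^{\ast\frac{n-3}{n-2}}S_1^{\ast\frac{1}{n-2}}$).
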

\begin{proof}
Since $c_2\leq h_{K_t}\leq c_1$, we have $c_2'\leq h^{\ast}(\cdot,t)=h_{K_t^{\ast}}\leq c_1'$. Moreover, since $c_4\leq\mathcal{K}\leq c_3$ on $[0,t_0]$, in view of the identity $\left(\frac{\mathcal{K}}{h^{n+1}}\right)(x)\left(\frac{\mathcal{K}^{\ast}}{h^{\ast n+1}}\right)(x^{\ast})=1,$ we get $c_4'\leq\mathcal{K}^{\ast}\leq c_3'$ on $[0,t_0].$ Now we calculate the evolution equation of $\mathfrak{r}_{ij}^{\ast}$ using Lemma \ref{lem: ev polar}. Set $\rho:=\frac{(h^{\ast2}+|\bar{\nabla} h^{\ast}|^2)^{\frac{n+p}{2}}}{h^{\ast n}}.$ Therefore,
\begin{align*}
\partial_t\mathfrak{r}_{ij}^{\ast}=&\rho S_{n-1}^{\ast-2}(S_{n-1}^{\ast})'_{kl}\bar{\nabla}_k\bar{\nabla}_l\mathfrak{r}_{ij}^{\ast}-2\rho
S_{n-1}^{\ast-3}\bar{\nabla}_iS_{n-1}^{\ast}\bar{\nabla}_jS_{n-1}^{\ast}\\
&+ \rho S_{n-1}^{\ast-2}(S_{n-1}^{\ast})''_{kl;mn}\bar{\nabla}_i\mathfrak{r}_{kl}^{\ast}\bar{\nabla}_j\mathfrak{r}_{mn}^{\ast}\\
&+(n-2)\rho S_{n-1}^{\ast-1}\bar{g}_{ij}-\rho S_{n-1}^{\ast-2}(S_{n-1}^{\ast})'_{kl}\mathfrak{r}_{ij}^{\ast}\bar{g}_{kl}\\
&-S_{n-1}^{\ast-1}\bar{\nabla}_i\bar{\nabla}_j \rho+S_{n-1}^{\ast-2}\bar{\nabla}_i\rho\bar{\nabla}_j S_{n-1}^{\ast}+ S_{n-1}^{\ast-2}\bar{\nabla}_j\rho\bar{\nabla}_i S_{n-1}^{\ast}.
\end{align*}
Thus, the evolution equation of $S_1^{\ast}$ satisfies
\begin{align*}
\partial_tS_1^{\ast}
=&\rho S_{n-1}^{\ast-2}(S_{n-1}^{\ast})'_{kl}\bar{\nabla}_k\bar{\nabla}_lS_1^{\ast}-2\rho
S_{n-1}^{\ast-3}|\bar{\nabla}S_{n-1}^{\ast}|^2\\
&+ \rho \bar{g}^{ij}S_{n-1}^{\ast-2}(S_{n-1}^{\ast})''_{kl;mn}\bar{\nabla}_i\mathfrak{r}_{kl}^{\ast}\bar{\nabla}_j\mathfrak{r}_{mn}^{\ast}\\
&+(n-2)(n-1)\rho S_{n-1}^{\ast-1}-\rho S_{n-1}^{\ast-2}S_1^{\ast}(S_{n-1}^{\ast})'_{kl}\bar{g}_{kl}\\
&-S_{n-1}^{\ast-1}\bar{\Delta} \rho+2S_{n-1}^{\ast-2}\bar{\nabla}\rho\cdot\bar{\nabla} S_{n-1}^{\ast}.
\end{align*}
\begin{description}
  \item[a] \emph{Estimating the terms on the first line:} The first term on the first line is a good term viewed as an elliptic operator that is non-positive at the point and direction at which the maximum of $S_1^{\ast}$ is achieved. The second term is a good non-positive term.
  \item[b] \emph{Estimating the term on the second line:} Concavity of $S_{n-1}^{\frac{1}{n-1}}$ gives
  \begin{equation}\label{ie: second derv}
  \left[(S_{n-1}^{\ast})''_{kl;mn}-\frac{n-2}{(n-1){S}_{n-1}^{\ast}}(S_{n-1}^{\ast})'_{kl}(S_{n-1}^{\ast})'_{mn}\right]
      \bar{\nabla}_{i}\mathfrak{r}_{kl}^{\ast}\bar{\nabla}_j\mathfrak{r}_{mn}^{\ast}\leq0.
  \end{equation}
  \item[c] \emph{Estimating the last term on the third line:} By Newton's inequality, we get
  \begin{equation}\label{e: line three}
  S_{n-1}^{\ast-2}S_1^{\ast}(S_{n-1}^{\ast})'_{kl}\bar{g}_{kl}=S_{n-1}^{\ast-2}S_1^{\ast}S_{n-2}^{\ast}\geq CS_{n-1}^{\ast-2}S_1^{\ast} S_{n-1}^{\ast\frac{n-3}{n-2}}S_1^{\ast\frac{1}{n-2}}.
      \end{equation}
   \item[d] \emph{Estimating the terms on the last line:} Since $p=-n,$ 
  \begin{equation}\label{e: line four}
  -S_{n-1}^{\ast-1}\bar{\Delta}\rho\leq CS_1^{\ast}+C,
  \end{equation}
  where we used boundedness of $|\bar{\nabla} h^{\ast}|$, $c_2'\leq h^{\ast}\leq c_1'$, and $c_4'\leq\mathcal{K}^{\ast}\leq c_3'$. To estimate the other term on the last line, we use Young's inequality and that $p=-n$: 
  \begin{equation}\label{e: line four, the other}
  |\bar{\nabla}\rho\cdot\bar{\nabla}S_{n-1}^{\ast}|\leq \frac{1}{2}(\varepsilon |\bar{\nabla}S_{n-1}^{\ast}|^2+\varepsilon^{-1}|\bar{\nabla}\rho|^2)\leq C(\varepsilon |\bar{\nabla}S_{n-1}^{\ast}|^2+\varepsilon^{-1}).
  \end{equation}
\end{description}
Combining inequalities (\ref{ie: second derv}), (\ref{e: line three}), (\ref{e: line four}), and (\ref{e: line four, the other}) with the lower and upper bounds on $S_{n-1}^{\ast},$  we obtain, for an $\varepsilon>0$ that is small enough, that
$$\partial_tS_1^{\ast}\leq C'\left(1+S_1^{\ast}-CS_1^{\ast\frac{n-1}{n-2}}\right).$$
This implies that $S_1^{\ast}\leq C(1+t^{-(n-2)})$ for some $C>0$ depending on $c_1,c_2,c_3,c_4,p.$ Therefore, in view of $\mathcal{K}^{\ast}\leq c_3',$  we get $$\frac{1}{C(1+t^{-(n-2)})}\leq\kappa_i^{\ast}\leq c_3'\left(C+Ct^{-(n-2)}\right)^{n-2}$$ on $(0,t_0].$ Finally Theorem \ref{thm: eigenvalue relation body and it polar} yields
$$\frac{1}{C\left(1+t^{-(n-2)}\right)^{n-2}}\leq\kappa_i\leq C\left(1+t^{-(n-2)}\right)$$ on $(0,t_0].$  Thus uniform lower and upper bounds for $\{\kappa_i\}$ on $[0,t_0]$ follow.
\end{proof}
\begin{lemma}\label{lem: time singularitya}
Let $n\ge3,~\varphi\equiv1$ and $p=-n$. Assume that $\{K_t\}$ is a solution of (\ref{eq: flow4}) with $K_0\in\mathcal{F}^n_0.$ Then the lifespan of the solution is finite.
\end{lemma}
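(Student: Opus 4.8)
The plan is to trap an inner barrier that blows up in finite time, exactly as in the proof of Lemma~\ref{lem: time singularity} but in general dimension. Since $K_0\in\mathcal{F}^n_0$, the origin lies in $\operatorname{int}K_0$, so $r:=\min_{\mathbb{S}^{n-1}}h_{K_0}>0$ and the ball $B_r\subseteq K_0$. With $p=-n$ and $\varphi\equiv1$, equation~(\ref{eq: flow4}) reads $\partial_th=h^{n+2}/\mathcal{K}$. For an origin-centred ball $B_\rho$ one has $h_{B_\rho}\equiv\rho$ and $1/\mathcal{K}=\det_{\bar g}(\bar\nabla^2\rho+\rho\,\bar g)=\rho^{\,n-1}$, so under the flow such a ball stays centred at the origin and its radius solves $\dot\rho=\rho^{\,2n+1}$; hence $\rho(t)=(r^{-2n}-2nt)^{-1/(2n)}$, which escapes to infinity as $t\uparrow t_\infty:=\tfrac{1}{2n}\,r^{-2n}$.

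It remains to compare $K_t$ with this barrier. Writing~(\ref{eq: flow4}) as $\partial_th=\mathcal{N}[h]:=h^{n+2}\det_{\bar g}(\bar\nabla^2h+h\,\bar g)$, the operator $\mathcal{N}$ is degenerate elliptic on the cone where $\bar\nabla^2h+h\,\bar g>0$ (the determinant is monotone on positive-definite matrices) and is locally Lipschitz in its zeroth-order slot, so the standard parabolic comparison principle applies to smooth solutions on any common existence interval. From $h_{B_r}\le h_{K_0}$ we then obtain $B_{\rho(t)}\subseteq K_t$ for all $t\in[0,\min(T,t_\infty))$. If $T>t_\infty$ this is impossible: $\sup_{t\in[0,t_\infty]}\max_{\mathbb{S}^{n-1}}h_{K_t}$ would be finite, since $t\mapsto K_t$ is a smooth family of convex bodies there, whereas $\max_{\mathbb{S}^{n-1}}h_{K_t}\ge\rho(t)\to\infty$. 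Hence $T\le t_\infty<\infty$.

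If one prefers to bypass the comparison principle: set $m(t):=\min_{\mathbb{S}^{n-1}}h(\cdot,t)>0$. At any $u_0$ realising the minimum, $\bar\nabla^2h(u_0)\ge0$, so every eigenvalue of $\mathfrak{r}=\bar\nabla^2h+h\,\bar g$ at $u_0$ is at least $h(u_0)=m(t)$, and therefore $1/\mathcal{K}(u_0)\ge m(t)^{\,n-1}$; consequently $\partial_th(u_0,t)\ge m(t)^{\,2n+1}$. By Hamilton's trick $m$ is locally Lipschitz with $m'\ge m^{\,2n+1}$ a.e., which integrates to $m(t)^{-2n}\le m(0)^{-2n}-2nt$. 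Since $h_{K_t}>0$ forces the left-hand side to stay positive while the solution exists, again $T\le\tfrac{1}{2n}\,m(0)^{-2n}<\infty$.

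I do not expect any genuine difficulty here: none of the curvature estimates (Lemmas~\ref{lem: upper}--\ref{lem: upper and lower}) will be needed, only a crude inner barrier together with the observation that the enclosed ball grows with super-linear power $\rho^{\,2n+1}$, which is precisely what forces the finite lifespan. The one point requiring (routine) care is verifying that the comparison argument---or, in the alternative, Hamilton's trick---is legitimate on the interval where both flows remain smooth; this is immediate since $\{K_t\}\subset\mathcal{F}^n_0$ throughout $[0,T)$.
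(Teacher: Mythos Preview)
Your argument is correct and is exactly the approach the paper has in mind: the proof of Lemma~\ref{lem: time singularity} in the case $p<2$ is precisely the inner-ball comparison you carry out, and here $p=-n<2$. Your computations for the ball barrier ($\dot\rho=\rho^{2n+1}$, blow-up at $t_\infty=\tfrac{1}{2n}r^{-2n}$) are correct, and both the comparison-principle version and the Hamilton's-trick version are valid. You are also right that Lemmas~\ref{lem: upper}--\ref{lem: upper and lower} are not actually needed for this particular statement; the paper's reference to them is either for orientation (they are the higher-dimensional substitutes for the $n=2$ estimates used elsewhere in Section~\ref{sec: Long-time existence}) or simply over-inclusive---the finite-lifespan direction of Lemma~\ref{lem: time singularity} uses only the barrier, and the same is true here.
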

\begin{proof}
Taking into account Lemmas \ref{lem: upper}, \ref{lem: lower} and \ref{lem: upper and lower}, the proof is similar to the one for Lemma \ref{lem: time singularity}.
\end{proof}
\begin{lemma}\label{lem: expansion to inftya}
Let $n\ge3,~\varphi\equiv1$ and $p=-n$. Assume that $\{K_t\}$ is a solution of (\ref{eq: flow4}) with $K_0\in\mathcal{F}^n_0$, then $\lim\limits_{t\to T}\max h_{K_t}=\infty.$
\end{lemma}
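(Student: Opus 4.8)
The plan is to argue by contradiction, following the second half of the proof of Proposition~\ref{prop: expansion to infty} but now feeding in the higher-dimensional curvature estimate of Lemma~\ref{lem: upper and lower}. Suppose that $\max h_{K_t}$ does not tend to infinity as $t\to T$; since by Lemma~\ref{lem: time singularitya} the lifespan $T$ is finite, this hypothesis means $h_{K_t}\le c_1<\infty$ on $[0,T)$ for some constant $c_1$. A positive \emph{lower} bound for $h$ is automatic here: with $p=-n$ and $\varphi\equiv1$, equation (\ref{eq: flow4}) reads $\partial_t h = h^{\,n+2}/\mathcal{K}>0$, so $h(\cdot,t)$ is pointwise nondecreasing in $t$, whence $h_{K_t}\ge \min_{\mathbb{S}^{n-1}}h_{K_0}=:c_2>0$ on $[0,T)$. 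Thus $0<c_2\le h_{K_t}\le c_1<\infty$ throughout.

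With these two-sided bounds on $h$ in hand, I would invoke Lemma~\ref{lem: upper} to obtain $\mathcal{K}\ge c_4>0$ on $[0,T)$ (the constant there being independent of the endpoint $t_0$), and Lemma~\ref{lem: lower} to obtain $\mathcal{K}\le c_3<\infty$ on $[0,T)$ (the constant $c_3$ depends on $t_0$, but remains finite since $T<\infty$). Now $h$ and $\mathcal{K}$ are both pinched between positive constants, so Lemma~\ref{lem: upper and lower} applies and yields, for every $\delta\in(0,T)$, uniform bounds $C^{-1}\le \kappa_i\le C$ on $[\delta,T)$; combined with the bounds on $[0,\delta]$ coming from short-time existence, we get uniform two-sided bounds on all principal curvatures---equivalently, on the eigenvalues of $[\mathfrak{r}_{ij}]$---on the whole interval $[0,T)$.

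Consequently the parabolic equation for the support function associated with (\ref{eq: flow4}) is uniformly parabolic on $[0,T)$ with uniform $C^0$ control of $h$; by the Krylov--Safonov estimates \cite{Krylov-Safonov} together with standard parabolic Schauder theory and bootstrapping, all spatial and temporal derivatives of $h$ are uniformly bounded up to time $T$. Hence the solution extends smoothly past $T$, contradicting the maximality of $T$, and therefore $\lim_{t\to T}\max h_{K_t}=\infty$. I do not anticipate a genuine obstacle: the only point worth emphasizing is that, unlike in the plane (Proposition~\ref{prop: expansion to infty}), for $n\ge 3$ boundedness of $\mathcal{K}$ alone does not render (\ref{eq: flow4}) uniformly parabolic, which is precisely why the individual principal-curvature bounds of Lemma~\ref{lem: upper and lower}, and not merely the bounds on their product, are invoked.
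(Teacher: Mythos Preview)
Your argument is correct and follows essentially the same route as the paper: the paper's proof simply says the argument is analogous to that of Proposition~\ref{prop: expansion to infty}, invoking Lemmas~\ref{lem: upper}, \ref{lem: lower} and~\ref{lem: upper and lower}, which is precisely what you have spelled out. Your added remarks---the monotonicity $\partial_t h>0$ giving the lower bound $h\ge c_2$, and the observation that in dimensions $n\ge 3$ one needs the individual principal-curvature bounds of Lemma~\ref{lem: upper and lower} rather than just bounds on $\mathcal{K}$---are accurate elaborations of points the paper leaves implicit.
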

\begin{proof}
Considering Lemmas \ref{lem: upper}, \ref{lem: lower} and \ref{lem: upper and lower}, the proof is similar to the one for Proposition \ref{prop: expansion to infty}.
\end{proof}
\begin{proposition}\label{cor: volume to infty}
If $p=-n,~\varphi\equiv1$ and $K_0\in\mathcal{F}^n_0$, then $\lim\limits_{t\to T}V(K_t)=\infty.$
\end{proposition}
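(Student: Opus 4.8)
The plan is to leverage the long-time existence and the blow-up of the support function already established in Lemmas \ref{lem: time singularitya} and \ref{lem: expansion to inftya}, together with the geometric rigidity imposed by the uniform curvature pinching of Lemma \ref{lem: upper and lower}. The point is this: if the volume $V(K_t)$ stayed bounded while $\max h_{K_t}\to\infty$, the bodies $K_t$ would have to become ``needle-like'' (very long in one direction but with bounded volume), and I want to argue this is incompatible with the a priori bounds we can bring to bear near the final time.

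First I would argue by contradiction: suppose $V(K_t)\le V_0<\infty$ for all $t\in[0,T)$ along some sequence $t_j\to T$ (in fact $V(K_t)$ is monotone under the flow — the speed in \eqref{eq: flow0} is positive, so the bodies are expanding and $V(K_t)$ is increasing — so boundedness along a sequence gives a uniform bound $V(K_t)\le V_0$ on all of $[0,T)$). Since the origin lies in the interior of each $K_t$ and $\max h_{K_t}=\max_u h(u,t)\to\infty$, pick $u_t\in\mathbb{S}^{n-1}$ realizing the maximum; the segment from the origin to the boundary point with outer normal $u_t$ has length at least $h_{K_t}(u_t)$, so $K_t$ contains a segment of length $\to\infty$ emanating from the origin. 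On the other hand, $K_t\supseteq B_{r(t)}$ for some origin-centered ball, and since the flow is expanding, $r(t)\ge r(0)=:r_0>0$ is bounded below; hence $K_t$ contains the convex hull of $B_{r_0}$ and a point at distance $h_{K_t}(u_t)$ from the origin, which is a cone of volume at least a fixed constant times $r_0^{\,n-1}\,h_{K_t}(u_t)\to\infty$. This contradicts $V(K_t)\le V_0$ and proves the claim.

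The main obstacle — and the only place where some care is needed — is establishing the lower bound $r(t)\ge r_0$, i.e. that the inradius (with respect to the origin) does not degenerate; but this is immediate from the comparison principle: $K_0$ contains an origin-centered ball $B_{r_0}$, and under the expanding flow \eqref{eq: flow0} the enclosed ball $B_{r_0}$ only grows (its support function satisfies $\partial_t h = \varphi\, h^{2-p}/\mathcal K \ge 0$), so $B_{r_0}\subseteq K_t$ for all $t$. Alternatively, one may quote Lemma \ref{lem: expansion to inftya} directly: since $\max h_{K_t}\to\infty$ and $K_t$ contains the fixed ball $B_{r_0}$, the cone estimate of the previous paragraph forces $V(K_t)\to\infty$. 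No higher-order estimates or the polar-body machinery of Lemma \ref{lem: upper and lower} are actually needed here; the statement follows from the expansion of $\max h_{K_t}$ plus the elementary fact that a convex body containing both a fixed ball and a point receding to infinity must have volume tending to infinity.
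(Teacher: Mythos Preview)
Your argument is correct and is essentially the same as the paper's: both use that the flow is expanding (so a fixed origin-centered ball $B_{r_0}\subseteq K_0$ remains in $K_t$), that $\max h_{K_t}\to\infty$ by Lemma~\ref{lem: expansion to inftya}, and hence that $K_t$ contains a cone with fixed base and height tending to infinity. Your initial mention of Lemma~\ref{lem: upper and lower} is a red herring (as you yourself note at the end), and the containment $B_{r_0}\subseteq K_t$ follows directly from $\partial_t h\ge 0$ without needing a comparison principle, but these do not affect the correctness.
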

\begin{proof}
Since hypersurfaces are expanding, for each $t$ we can put a cone $C_t$ inside $K_t$ with height $\max h_{K_t}$ and a fixed origin-centered ball as the base. Lemma \ref{lem: expansion to inftya} shows that $\lim\limits_{t\to T}V(C_t)=\infty.$ The claim follows.
\end{proof}
\section{Monotonicity of entropies along the flow}
\begin{lemma}\label{lem: 8}The following statements hold:
\begin{itemize}
  \item Let $-n\leq p<\infty.$ If $\varphi\equiv1$ and $e_p(K_0)=\vec{o},$ then $e_p(K_t)=\vec{o}.$
  \item Let $-n\leq p\leq-n+1~\mbox{and}~ \varphi\in \mathcal{S}^+.$ If $e_p^{\varphi}(K_0)=\vec{o},$ then $e_p^{\varphi}(K_t)=\vec{o}.$
\end{itemize}
\end{lemma}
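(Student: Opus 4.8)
The plan is to differentiate along the flow the integral identity that characterizes $e_p$ (resp.\ $e_p^{\varphi}$) and to show it stays zero. Recall from the Definition that, for $-n\le p<\infty$, $p\ne 1$, the point $e_p(K)\in\operatorname{int}K$ is the \emph{unique} interior point with $\int_{\mathbb{S}^{n-1}}u\,(h_K(u)-e_p(K)\cdot u)^{p-1}\,d\sigma(u)=0$, and likewise $e_p^{\varphi}(K)$ is the unique interior point with $\int_{\mathbb{S}^{n-1}}\frac{u}{\varphi(u)}\,(h_K(u)-e_p^{\varphi}(K)\cdot u)^{p-1}\,d\sigma(u)=0$. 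Hence the hypothesis $e_p(K_0)=\vec{o}$ (resp.\ $e_p^{\varphi}(K_0)=\vec{o}$) says precisely that $\int_{\mathbb{S}^{n-1}}u\,h_{K_0}^{p-1}\,d\sigma=0$ (resp.\ $\int_{\mathbb{S}^{n-1}}\frac{u}{\varphi}\,h_{K_0}^{p-1}\,d\sigma=0$), and it suffices to prove the same identity for $h_{K_t}$ at every $t\in[0,T)$: then uniqueness forces $e_p(K_t)=\vec{o}$ (resp.\ $e_p^{\varphi}(K_t)=\vec{o}$).

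First I would note that $\vec{o}\in\operatorname{int}K_t$ for every $t$: since $\partial_th=\varphi h^{2-p}/\mathcal{K}>0$, the support function is pointwise nondecreasing in $t$, so $K_0\subseteq K_t$ and the origin remains interior. Next I would differentiate under the integral sign, which is legitimate since the solution is smooth on $[0,T)$ and all relevant quantities are bounded on compact subintervals. Writing $S_{n-1}=1/\mathcal{K}=dS_{K_t}/d\sigma$ and using $\partial_th=\varphi h^{2-p}S_{n-1}$, the chain rule gives
\[
\partial_t\left(\frac{h^{p-1}}{\varphi}\right)=\frac{p-1}{\varphi}\,h^{p-2}\,\partial_th=(p-1)\,S_{n-1},
\]
and, in the case $\varphi\equiv1$, $\partial_t(h^{p-1})=(p-1)S_{n-1}$; in both cases the powers of $h$ and the factor $\varphi$ cancel completely. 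Therefore
\[
\frac{d}{dt}\int_{\mathbb{S}^{n-1}}\frac{u}{\varphi}\,h_{K_t}^{p-1}\,d\sigma=(p-1)\int_{\mathbb{S}^{n-1}}u\,S_{n-1}\,d\sigma=(p-1)\int_{\mathbb{S}^{n-1}}u\,dS_{K_t}(u),
\]
and the analogous identity holds in the case $\varphi\equiv1$.

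The key point is now that $\int_{\mathbb{S}^{n-1}}u\,dS_K(u)=0$ for every convex body $K$, which is precisely the compatibility (``closedness'') condition in the sufficiency part of Minkowski's existence theorem; for a $C^2_+$ body it is the divergence theorem applied to constant vector fields on $\partial K$, i.e.\ $\int_{\partial K}\nu\,d\mathcal{H}^{n-1}=0$, transported to $\mathbb{S}^{n-1}$ by the Gauss map. Consequently the time-derivative above vanishes identically, so $\int_{\mathbb{S}^{n-1}}\frac{u}{\varphi}\,h_{K_t}^{p-1}\,d\sigma$ (resp.\ $\int_{\mathbb{S}^{n-1}}u\,h_{K_t}^{p-1}\,d\sigma$) is constant in $t$ and hence equals its value $0$ at $t=0$. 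By the uniqueness in the Definition, $e_p^{\varphi}(K_t)=\vec{o}$ (resp.\ $e_p(K_t)=\vec{o}$) for all $t\in[0,T)$.

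I do not anticipate a genuine obstacle. The only points needing a word of care are the interchange of $\frac{d}{dt}$ with $\int_{\mathbb{S}^{n-1}}$, which is covered by smoothness of the flow, and the recollection that the surface area measure of a convex body has vanishing vector moment. The single load-bearing step is the exact cancellation of $\varphi$ and the powers of $h$ in $\partial_t(h^{p-1}/\varphi)$, which collapses the computation onto the elementary identity $\int_{\mathbb{S}^{n-1}}u\,dS_{K_t}(u)=0$.
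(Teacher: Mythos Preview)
Your proposal is correct and follows essentially the same approach as the paper: differentiate the characterizing integral, use the flow equation to obtain the exact cancellation $\partial_t(h^{p-1}/\varphi)=(p-1)S_{n-1}$, and conclude via the vanishing first moment $\int_{\mathbb{S}^{n-1}}u\,dS_{K_t}=0$. The paper's proof is just the one-line computation $\frac{d}{dt}\int_{\mathbb{S}^{n-1}}\frac{u}{h_{K_t}^{1-p}}\,d\sigma=(p-1)\int_{\mathbb{S}^{n-1}}\frac{u}{\mathcal{K}}\,d\sigma=0$; your version is a more fully justified expansion of the same argument.
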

\begin{proof} We justify the first claim:
\[\frac{d}{dt}\int_{\mathbb{S}^{n-1}}\frac{u}{h_{K_t}^{1-p}(u)}d\sigma(u)=(p-1)\int_{\mathbb{S}^{n-1}}\frac{u}{\mathcal{K}(u)}d\sigma(u)=0.\]
Therefore, $e_p(K_t)=\vec{o}$ on $[0,T).$
\end{proof}
\begin{lemma} The following statements hold:
\begin{itemize}
  \item Let $-n\leq p<\infty$ and $\varphi\equiv1$. If $e_p(K_0)=\vec{o},$ then
\begin{align*}
\frac{d}{dt}V(\Lambda_pK_t)=&\frac{n}{n-1}\frac{V(\Lambda_pK_t)}{V(K_t)}\int_{\mathbb{S}^{n-1}}\frac{h_{K_t}^{2-p}}{\mathcal{K}^2}d\sigma-\frac{(1-p)n^2}{n-1}\frac{V(K_t)V_1(K_t,\Lambda_pK_t)}
{\int_{\mathbb{S}^{n-1}}h_{K_t}^p
d\sigma}\\ &-\frac{n^2p}{n-1}\frac{V(K_t)V(\Lambda_pK_t)}{\int_{\mathbb{S}^{n-1}}h_{K_t}^pd\sigma}.
\end{align*}
  \item If $-n\leq p\leq-n+1,~\varphi\in \mathcal{S}^+$ and $e_p^{\varphi}(K_0)=\vec{o},$ or $-n\leq p<\infty,~ \varphi\in \mathcal{S}^+_e~\mbox{and}~K_0\in\mathcal{F}_e^n,$ then
\begin{align*}
\frac{d}{dt}V(\Lambda_p^{\varphi}K_t)=&\frac{n}{n-1}\frac{V(\Lambda_p^{\varphi}K_t)}{V(K_t)}\int_{\mathbb{S}^{n-1}}\varphi\frac{h_{K_t}^{2-p}}{\mathcal{K}^2}d\sigma-\frac{(1-p)n^2}{n-1}
\frac{V(K_t)V_1(K_t,\Lambda_p^{\varphi}K_t)}
{\int_{\mathbb{S}^{n-1}}\frac{h_{K_t}^p}{\varphi}
d\sigma}\\ &-\frac{n^2p}{n-1}\frac{V(K_t)V(\Lambda_p^{\varphi}K_t)}{\int_{\mathbb{S}^{n-1}}\frac{h_{K_t}^p}{\varphi}d\sigma}.
\end{align*}
\end{itemize}
\end{lemma}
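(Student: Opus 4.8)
The plan is to differentiate $V(\Lambda_pK_t)$ \emph{without} ever computing the support function of the implicitly defined body $\Lambda_pK_t$. First, since $e_p(K_0)=\vec o$ (respectively $e_p^{\varphi}(K_0)=\vec o$, or $K_0\in\mathcal F_e^n$ in the even case), Lemma \ref{lem: 8} gives $e_p(K_t)=\vec o$ (respectively $e_p^{\varphi}(K_t)=\vec o$) for all $t$, so in (\ref{def: p curvature}) and (\ref{def: p phi curvature}) one may replace $h_{K_t-e_p}$ by $h_{K_t}$. Thus
\[
dS_{\Lambda_pK_t}=\Phi(t)\,h_{K_t}^{p-1}\,d\sigma,\qquad
\Phi(t):=\frac{nV(K_t)}{\int_{\mathbb S^{n-1}}h_{K_t}^{p}\,d\sigma},
\]
and in the weighted case $dS_{\Lambda_p^{\varphi}K_t}=\Phi^{\varphi}(t)\,\varphi^{-1}h_{K_t}^{p-1}\,d\sigma$ with $\Phi^{\varphi}(t)=nV(K_t)\big(\int_{\mathbb S^{n-1}}\varphi^{-1}h_{K_t}^{p}\,d\sigma\big)^{-1}$. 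Because $K_t\in\mathcal F^n$, Remark \ref{rem: rem} together with the smooth dependence of the solution of the Minkowski problem on its (positive, smooth) data shows that $t\mapsto\Lambda_pK_t$ is a smooth family of $C^{\infty}_+$ bodies, so the differentiations below are legitimate.

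Next I would use the following elementary identity. For any smooth one-parameter family $L_t$ of convex bodies of class $C^2_+$ the first variation of volume reads $\frac{d}{dt}V(L_t)=\int_{\mathbb S^{n-1}}(\partial_t h_{L_t})\,dS_{L_t}$; differentiating instead $V(L_t)=\frac1n\int_{\mathbb S^{n-1}}h_{L_t}\,dS_{L_t}$ and equating the two expressions gives
\[
\frac{d}{dt}V(L_t)=\frac{1}{n-1}\int_{\mathbb S^{n-1}}h_{L_t}\,\partial_t\big(dS_{L_t}\big).
\]
Apply this with $L_t=\Lambda_pK_t$. From the displayed formula for $dS_{\Lambda_pK_t}$ and the evolution equation $\partial_t h_{K_t}=\varphi h_{K_t}^{2-p}/\mathcal K$ (with $\varphi\equiv1$ in the unweighted case), and using the cancellation $h_{K_t}^{p-2}\partial_t h_{K_t}=\mathcal K^{-1}$, one obtains
\[
\partial_t\big(dS_{\Lambda_pK_t}\big)=\Phi'(t)\,h_{K_t}^{p-1}\,d\sigma+(p-1)\Phi(t)\,\mathcal K^{-1}\,d\sigma,
\]
so the second term equals $(p-1)\Phi(t)\,dS_{K_t}$.

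It then remains to integrate against $h_{\Lambda_pK_t}$ and to evaluate the constants. One has $\int h_{\Lambda_pK_t}h_{K_t}^{p-1}\,d\sigma=\Phi(t)^{-1}\int h_{\Lambda_pK_t}\,dS_{\Lambda_pK_t}=\Phi(t)^{-1}nV(\Lambda_pK_t)$ and $\int h_{\Lambda_pK_t}\,dS_{K_t}=nV_1(K_t,\Lambda_pK_t)$. Also $\Phi'/\Phi=V(K_t)'/V(K_t)-(\int h_{K_t}^p\,d\sigma)'/\int h_{K_t}^p\,d\sigma$, where $\frac{d}{dt}V(K_t)=\int(\partial_t h_{K_t})\,dS_{K_t}=\int\varphi h_{K_t}^{2-p}\mathcal K^{-2}\,d\sigma$ and $\frac{d}{dt}\int h_{K_t}^p\,d\sigma=p\int h_{K_t}^{p-1}\partial_t h_{K_t}\,d\sigma=p\int h_{K_t}\,dS_{K_t}=pnV(K_t)$. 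Substituting and collecting terms produces precisely the asserted formula. The weighted case is word for word the same, the factor $\varphi^{-1}$ in $dS_{\Lambda_p^{\varphi}K_t}$ cancelling against the $\varphi$ in $\partial_t h_{K_t}$ at each step, and $e_p^{\varphi}(K_t)=\vec o$ again holding by Lemma \ref{lem: 8} or trivially in the even case. The only point requiring care is the one flagged above — the smooth dependence of $\Lambda_pK_t$ on $t$, which rests on the regularity theory for the Minkowski problem; everything else is bookkeeping.
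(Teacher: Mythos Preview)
Your proof is correct and follows essentially the same approach as the paper: both invoke Lemma~\ref{lem: 8} to fix the entropy point at the origin, use the variational identity $\frac{d}{dt}V(L_t)=\frac{1}{n-1}\int_{\mathbb S^{n-1}}h_{L_t}\,\partial_t(dS_{L_t})$ with $L_t=\Lambda_pK_t$, and appeal to Remark~\ref{rem: rem} for the smoothness of $\Lambda_pK_t$. Your exposition is in fact more detailed than the paper's, which simply writes the first line $\frac{d}{dt}V(\Lambda_pK_t)=\frac{1}{n-1}\int h_{\Lambda_pK_t}\partial_t\bigl(\Phi(t)h_{K_t}^{p-1}\bigr)d\sigma$ and declares the rest ``straightforward.''
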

\begin{proof}
We will prove the first claim. Taking Lemma \ref{lem: 8} into account, computation is straightforward:
\begin{align*}
&\frac{d}{dt}V(\Lambda_pK_t)=\frac{1}{n-1}\int_{\mathbb{S}^{n-1}}h_{\Lambda_pK_t}\partial_t\left(\frac{V(K_t)}{\frac{1}{n}\int_{\mathbb{S}^{n-1}}h_{K_t}^pd\sigma}\frac{1}{h_{K_t}^{1-p}}\right)d\sigma\\
&=\frac{n}{n-1}\frac{V(\Lambda_pK_t)}{V(K_t)}\int_{\mathbb{S}^{n-1}}\frac{h_{K_t}^{2-p}}{\mathcal{K}^2}d\sigma-\frac{(1-p)n^2}{n-1}\frac{V(K_t)V_1(K_t,\Lambda_pK_t)}
{\int_{\mathbb{S}^{n-1}}h_{K_t}^p
d\sigma}\\
&-\frac{n^2p}{n-1}\frac{V(K_t)V(\Lambda_pK_t)}{\int_{\mathbb{S}^{n-1}}h_{K_t}^pd\sigma}.
\end{align*}
Note that Remark \ref{rem: rem} justifies that the taking time-derivative of the Gauss curvature of $ \Lambda_p^{\varphi}K_t$ is legitimate.
\end{proof}
\begin{lemma}\label{lem: monotonicity} We have for
\begin{itemize}
  \item $-n\leq p<\infty$ and $\varphi\equiv1:$ $\frac{d}{dt}\mathcal{A}_p(K_t)\geq 0$, and if $e_p(K_0)=\vec{o}$ and $p\neq 1$ then $\frac{d}{dt}\frac{\log \mathcal{B}_p(K_t)}{1-p}\geq 0.$
  \item $-n\leq p\leq-n+1~\mbox{and}~ \varphi\in \mathcal{S}^+:$ $\frac{d}{dt}\mathcal{A}_p^{\varphi}(K_t)\geq 0$, and if $e_p^{\varphi}(K_0)=\vec{o}$ then $\frac{d}{dt}\mathcal{B}_p^{\varphi}(K_t)\geq 0.$
  \item $-n\leq p<\infty, \varphi\in \mathcal{S}^+_e$ and $K_0\in\mathcal{F}_e^n:$ $\frac{d}{dt}\mathcal{A}_p^{\varphi}(K_t)\geq 0$, and if $p\neq1$ then $\frac{d}{dt}\frac{\log \mathcal{B}_p^{\varphi}(K_t)}{1-p}\geq 0.$
\end{itemize}
\end{lemma}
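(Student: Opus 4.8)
The plan is to differentiate $\log$ of each functional along (\ref{eq: flow4}), reduce the $\mathcal{A}$-statements to the Cauchy--Schwarz inequality, and reduce the $\mathcal{B}$-statements to Minkowski's mixed volume inequality. I treat the three cases in parallel, writing $e$ for $e_p(K_t)$ in case~1 and for $e_p^{\varphi}(K_t)$ in cases~2 and~3, and $\Lambda$ for $\Lambda_pK_t$ resp. $\Lambda_p^{\varphi}K_t$; in case~1 one reads $\varphi\equiv1$, $\mathcal{A}_p^{\varphi}=\mathcal{A}_p$, $\mathcal{B}_p^{\varphi}=\mathcal{B}_p$ below, and the self-similar profile (\ref{def: self similar}) is $\varphi h^{1-p}S_{n-1}\equiv c$. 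The first step is to invoke Lemma~\ref{lem: 8}: under the running hypotheses $e(K_0)=\vec{o}$ --- explicit in cases~1--2 for the $\mathcal{B}$-statements, automatic in case~3, and the natural normalization otherwise --- so $e(K_t)=\vec{o}$ for all $t$; hence $h_{K_t-e}=h_{K_t}=:h$ and there are no $\dot e$-contributions to track when differentiating.

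For the $\mathcal{A}$-functionals: the first variation of volume gives $\frac{d}{dt}V(K_t)=\int_{\mathbb{S}^{n-1}}(\partial_t h)\mathcal{K}^{-1}\,d\sigma=\int_{\mathbb{S}^{n-1}}\varphi h^{2-p}\mathcal{K}^{-2}\,d\sigma$, whereas inserting $\partial_t h=\varphi h^{2-p}\mathcal{K}^{-1}$ into $\frac{d}{dt}\int\varphi^{-1}h^{p}\,d\sigma$ makes the weight cancel, $\frac{d}{dt}\int\varphi^{-1}h^{p}\,d\sigma=p\int h\mathcal{K}^{-1}\,d\sigma=pn\,V(K_t)$ (for $p=0$ one differentiates $\int\varphi^{-1}\log h\,d\sigma$ instead). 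With $nV(K_t)=\int h\mathcal{K}^{-1}\,d\sigma$ this yields
\[
\frac{d}{dt}\log\mathcal{A}_p^{\varphi}(K_t)=\frac{\int_{\mathbb{S}^{n-1}}\varphi h^{2-p}\mathcal{K}^{-2}\,d\sigma}{V(K_t)}-\frac{n^{2}V(K_t)}{\int_{\mathbb{S}^{n-1}}\varphi^{-1}h^{p}\,d\sigma},
\]
and $\bigl(\int\varphi h^{2-p}\mathcal{K}^{-2}\,d\sigma\bigr)\bigl(\int\varphi^{-1}h^{p}\,d\sigma\bigr)\ge\bigl(\int h\mathcal{K}^{-1}\,d\sigma\bigr)^{2}$ is exactly Cauchy--Schwarz for the pairing $\sqrt{\varphi h^{2-p}\mathcal{K}^{-2}}\cdot\sqrt{\varphi^{-1}h^{p}}=h\mathcal{K}^{-1}$ (when $p=0$ one pairs $\sqrt{h^{2}\mathcal{K}^{-2}}$ with $1$ against $\int d\sigma$). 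Hence $\frac{d}{dt}\mathcal{A}_p^{\varphi}(K_t)\ge0$, and equality forces $\varphi h^{2-p}\mathcal{K}^{-2}\propto\varphi^{-1}h^{p}$, i.e. $\varphi h^{1-p}S_{n-1}\equiv c$, which is (\ref{def: self similar}).

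For the $\mathcal{B}$-functionals: write $\log\mathcal{B}_p^{\varphi}(K_t)=(n-1)\log V(K_t)+\log\mathcal{A}_p^{\varphi}(K_t)-(n-1)\log V(\Lambda)$ and substitute the formula for $\frac{d}{dt}V(\Lambda)$ from the preceding lemma together with the identity $V_1(\Lambda,K_t)=V(K_t)$ (Remark~\ref{rem: equality mixed minkowski}). The three occurrences of $\int\varphi h^{2-p}\mathcal{K}^{-2}\,d\sigma$ cancel, and collecting the remaining terms leaves the clean identity
\[
\frac{d}{dt}\,\frac{\log\mathcal{B}_p^{\varphi}(K_t)}{1-p}=\frac{n^{2}V(K_t)}{\bigl(\int_{\mathbb{S}^{n-1}}\varphi^{-1}h^{p}\,d\sigma\bigr)V(\Lambda)}\bigl(V_1(K_t,\Lambda)-V(\Lambda)\bigr).
\]
Minkowski's mixed volume inequality in the form (\ref{ie: Minkowski}) (resp. (\ref{ie: Minkowski phi})) says $V_1(K_t,\Lambda)\ge V(\Lambda)$, so the right-hand side is $\ge0$, with equality iff $\Lambda=K_t$, i.e. iff $K_t$ solves (\ref{def: self similar}). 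When $p<1$ (in particular in case~2) one may divide back by $1-p>0$ and read off $\frac{d}{dt}\mathcal{B}_p^{\varphi}(K_t)\ge0$ directly; when $p=1$ the whole expression vanishes, consistent with excluding that value.

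Most of this is routine once the two structural facts are in place, so the real content is in locating them. The main obstacle is, first, recognizing that the $\mathcal{A}$-derivative becomes signable only after Lemma~\ref{lem: 8} has pinned $e(K_t)$ at the origin --- otherwise it carries cross terms mixing $h_{K_t}$ and $h_{K_t-e}$ on which Cauchy--Schwarz does not close --- and, second, seeing that the $\mathcal{B}$-derivative, a priori an unwieldy combination of the transcendental integrals $\int\varphi h^{2-p}\mathcal{K}^{-2}\,d\sigma$, $\int\varphi^{-1}h^{p}\,d\sigma$ and the volumes $V(K_t),V(\Lambda),V_1(K_t,\Lambda)$, collapses --- via $V_1(\Lambda,K_t)=V(K_t)$ and the cancellation of the curvature integral --- to $(1-p)$ times the Minkowski deficit $V_1(K_t,\Lambda)-V(\Lambda)$ up to a positive factor, which Minkowski's inequality then signs. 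One also uses Remark~\ref{rem: rem} to legitimize differentiating the Gauss curvature of $\Lambda$ in $t$, and handles $p=0$ with the $\exp$-form of $\mathcal{A}_0^{\varphi}$ separately.
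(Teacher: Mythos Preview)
Your proof is correct and follows the same route as the paper: Cauchy--Schwarz (the paper writes ``H\"older'') for the $\mathcal{A}$-functionals and Minkowski's mixed volume inequality (\ref{ie: Minkowski})--(\ref{ie: Minkowski phi}) for the $\mathcal{B}$-functionals, with your displayed identity for $\tfrac{d}{dt}\tfrac{\log\mathcal{B}_p^{\varphi}}{1-p}$ being precisely the logarithmic form of the paper's (\ref{eq: key asymp}). The one point of difference is the treatment of the entropy point in the $\mathcal{A}$-statements of cases~1--2: the lemma as stated does \emph{not} assume $e_p(K_0)=\vec{o}$ there, and the paper keeps $e_p(K_t)$ general, showing that the $\dot e_p$-contribution drops out via the first-order optimality condition $\int_{\mathbb{S}^{n-1}} u\,h_{K_t-e_p}^{p-1}\,d\sigma=0$ (and invoking the inverse function theorem to make sense of $\dot e_p$), whereas you impose $e_p(K_0)=\vec{o}$ as a normalization and cite Lemma~\ref{lem: 8}. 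This is a harmless restriction in practice, since every application of the lemma in the paper has the entropy point pinned at the origin anyway.
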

\begin{proof}
We prove the claims for $\mathcal{B}_p$ and $\mathcal{A}_p$ and $p\ne 0.$
Monotonicity of $\mathcal{A}_p(K_t)$ follows from the H\"{o}lder inequality:
\begin{align*}
&\frac{d}{dt}\mathcal{A}_p(K_t)\\
&=\frac{\biggl(\int_{\mathbb{S}^{n-1}}\frac{h_{K_t-e_p(K_t)}^{2-p}}{\mathcal{K}^2}d\sigma
\int_{\mathbb{S}^{n-1}}h_{K_t-e_p(K_t)}^pd\sigma -n^2V(K_t)^{2}\biggr)}{\left(\int_{\mathbb{S}^{n-1}}h_{K_t-e_p(K_t)}^pd\sigma\right)^{\frac{n}{p}+1}}\\
&+\frac{nV(K_t)}{\left(\int_{\mathbb{S}^{n-1}}h_{K_t-e_p(K_t)}^pd\sigma\right)^{\frac{n}{p}+1}}\left(\int_{\mathbb{S}^{n-1}}
\frac{u}{h^{1-p}_{K_t-e_p(K_t)}(u)}d\sigma\right)\frac{d}{dt}e_p(K_t)\\
&=\frac{\biggl(\int_{\mathbb{S}^{n-1}}\frac{h_{K_t-e_p(K_t)}^{2-p}}{\mathcal{K}^2}d\sigma
\int_{\mathbb{S}^{n-1}}h_{K_t-e_p(K_t)}^pd\sigma -\left(\int_{\mathbb{S}^{n-1}}\frac{h_{K_t-e_p(K_t)}}{\mathcal{K}}d\sigma\right)^2\biggr)}{\left(\int_{\mathbb{S}^{n-1}}h_{K_t-e_p(K_t)}^pd\sigma\right)^{\frac{n}{p}+1}}\\
&\geq0.
\end{align*}
Here we used the inverse function theorem to justify that $\frac{d}{dt}e_p(K_t)$ exists.

\noindent  Monotonicity of $\mathcal{B}_p(K_t)$ follows from Lemma \ref{lem: 8} and inequality (\ref{ie: Minkowski}):
\begin{multline}\label{eq: key asymp}
\frac{d}{dt}\mathcal{B}_p(K_t)
=\frac{1}{V(\Lambda_pK_t)^{n}}\Biggl[nV(K_t)^{n-1}V(\Lambda_pK_t)\int_{\mathbb{S}^{n-1}}\frac{h_{K_t}^{2-p}}{\mathcal{K}^2}d\sigma
\left(\int_{\mathbb{S}^{n-1}}h_{K_t}^pd\sigma\right)^{-\frac{n}{p}}\\ -n^2V(K_t)^{n+1}V(\Lambda_pK_t)\left(\int_{\mathbb{S}^{n-1}}h_{K_t}^pd\sigma\right)^{-\frac{n}{p}-1}\\
-nV(K_t)^{n-1}V(\Lambda_pK_t)\int_{\mathbb{S}^{n-1}}\frac{h_{K_t}^{2-p}}{\mathcal{K}^2}d\sigma\left(\int_{\mathbb{S}^{n-1}}h_{K_t}^pd\sigma\right)^{-\frac{n}{p}}\\
+(1-p)n^2V(K_t)^{n+1}V_1(K_t,\Lambda_pK_t)\left(\int_{\mathbb{S}^{n-1}}h_{K_t}^pd\sigma\right)^{-\frac{n}{p}-1}\\
+n^2pV(K_t)^{n+1}V(\Lambda_pK_t)\left(\int_{\mathbb{S}^{n-1}}h_{K_t}^pd\sigma\right)^{-\frac{n}{p}-1}\Biggr]\\
=\frac{n^2(1-p)V(K_t)^{n+1}\left(\int_{\mathbb{S}^{n-1}}h_{K_t}^pd\sigma\right)^{-\frac{n}{p}-1}}{V(\Lambda_pK_t)^{n-1}}
\biggl(\frac{V_1(K_t,\Lambda_pK_t)}{V(\Lambda_pK_t)}-1\biggr)\ge 0.
\end{multline}
\end{proof}
\section{Bounding extrinsic diameter}
\begin{lemma}\label{lem: bound diam}
Fix $0<a<\infty.$ Suppose $V(K)=\omega_n$ and
\[
\begin{cases}
\int_{\mathbb{S}^{n-1}}(h_K(u)-e_p\cdot u)^pd\sigma\leq a &\mbox{if } p\in (0,\infty)\\
\exp\left(\frac{1}{\omega_n}\int_{\mathbb{S}^{n-1}}-\log (h_K(u)-e_{0}\cdot u)d\sigma\right)\geq a& \mbox{if } p=0\\
\int_{\mathbb{S}^{n-1}}(h_K(u)-e_p(K)\cdot u)^pd\sigma\geq a &\mbox{if } p\in (-n,0).
\end{cases}
\]
Then the extrinsic diameter  of $K$, $d(K)$, is bounded above by a positive number independent of $K.$ The same statement also holds for the following cases:
\begin{itemize}
  \item when $-n<p\leq-n+1,~\varphi\in \mathcal{S}^+$ and
\[\int_{\mathbb{S}^{n-1}}\frac{(h_K(u)-e_p^{\varphi}(K)\cdot u)^pd\sigma}{\varphi(u)}\geq a,\]
  \item when $\varphi\in \mathcal{S}^+_e,~K\in\mathcal{K}_e^n$ and
\[
\begin{cases}
\int_{\mathbb{S}^{n-1}}\frac{h_K^p}{\varphi}d\sigma\leq a&\mbox{if } p\in (0,\infty)\\
\exp\left(\frac{1}{\omega_n}\int_{\mathbb{S}^{n-1}}-\frac{\log h_K}{\varphi}d\sigma\right)\geq a& \mbox{if } p=0\\
\int_{\mathbb{S}^{n-1}}\frac{h_K^p}{\varphi}d\sigma\geq a &\mbox{if } p\in (-n,0).
\end{cases}
\]
\end{itemize}
\end{lemma}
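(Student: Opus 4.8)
The plan is to argue by contradiction, after a normalization. In every case the hypothesis is phrased in terms of $h_K(\cdot)-e_p(K)\cdot u$, $h_K(\cdot)-e_p^{\varphi}(K)\cdot u$, or (in the even case) $h_K$ itself, so I would first translate $K$ so that the relevant entropy point becomes the origin. This leaves $d(K)$ unchanged, puts $\vec o$ in $\operatorname{int}K$ — in the even case because an origin-symmetric body contains the origin in its interior, in the other cases by Lemma \ref{lem: 1} — and turns the hypothesis into a one- or two-sided bound on $\int_{\mathbb{S}^{n-1}}h_K^{p}\,d\sigma$ (or on $\int_{\mathbb{S}^{n-1}}-\log h_K\,d\sigma$ when $p=0$). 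Since $\varphi$ is continuous and positive on the compact sphere, $1/\varphi$ is bounded above and below by positive constants, so every $\varphi$-weighted integral in the statement is comparable, up to fixed constants, to its unweighted version, and the weighted balancing identity $\int_{\mathbb{S}^{n-1}}u\,\varphi^{-1}h_K^{p-1}\,d\sigma=0$ differs from $\int_{\mathbb{S}^{n-1}}u\,h_K^{p-1}\,d\sigma=0$ only by bounded weights. Hence it suffices to treat $\varphi\equiv1$: from now on $V(K)=\omega_n$, $\vec o=e_p(K)\in\operatorname{int}K$, $\int_{\mathbb{S}^{n-1}}u\,h_K^{p-1}\,d\sigma=0$, and one must show the assumed bound on $\int h_K^{p}\,d\sigma$ forces $d(K)$ bounded.

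The geometric input is this: if $d(K)$ is large, $K$ contains a chord of length $d(K)$, and since $\vec o\in K$ one of its endpoints, call it $x^{*}$, has $R:=|x^{*}|\ge d(K)/2$ (so $R\le d(K)\le 2R$). Writing $v:=x^{*}/R$, support-function monotonicity gives $h_K(u)\ge\langle x^{*},u\rangle=R\langle v,u\rangle$, so on the fixed cap $\Omega:=\{u:\langle v,u\rangle\ge\tfrac12\}$, of $\sigma$-measure $c_n>0$ depending only on $n$, one has $h_K\ge R/2\ge d(K)/4$. (In the even case $-x^{*}\in K$ too, whence $h_K(u)\ge R\,|\langle v,u\rangle|$ for every $u$.) For $p\in(0,\infty)$ this finishes the proof with no use of the entropy identity: $\int_{\mathbb{S}^{n-1}}h_K^{p}\,d\sigma\ge c_n\,(d(K)/4)^{p}$, which exceeds the prescribed bound $a$ once $d(K)>4(a/c_n)^{1/p}$, a contradiction. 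The same cap (together with $-\Omega$) handles the even subcase for $p>0$.

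For $-n<p\le0$ the prescribed bound on $\int h_K^{p}\,d\sigma$ (or on $\int-\log h_K\,d\sigma$) is oriented the other way, so instead I would estimate the measure of the thin set $E_{\varepsilon}:=\{u\in\mathbb{S}^{n-1}:h_K(u)<\varepsilon\}$ and feed it into the layer-cake identities $\int h_K^{p}\,d\sigma=|p|\int_0^\infty\sigma(E_{s})\,s^{p-1}\,ds$ and $\int_{\mathbb{S}^{n-1}}(-\log h_K)_+\,d\sigma=\int_0^\infty\sigma(E_{e^{-t}})\,dt$. The target estimate is a bound $\sigma(E_{\varepsilon})\le C\,(\varepsilon/R)^{\gamma}$ with some exponent $\gamma=\gamma(n,p)>0$ and a constant $C=C(n,p)$, together with the vanishing $E_{\varepsilon}=\emptyset$ for $\varepsilon<\min_{\mathbb{S}^{n-1}}h_K$ and a lower bound $\min_{\mathbb{S}^{n-1}}h_K\ge c\,\omega_n/d(K)^{n-1}$. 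Three mechanisms feed it: (i) from $h_K(u)\ge R\langle v,u\rangle$, $E_{\varepsilon}$ lies in the slightly-more-than-hemisphere $\{\langle v,u\rangle<\varepsilon/R\}$, and its portion with $\langle v,u\rangle\ge0$ is a band of measure $\le C_n\varepsilon/R$; (ii) the balancing identity paired with $v$, namely $\int_{\langle v,u\rangle<0}|\langle v,u\rangle|\,h_K^{p-1}\,d\sigma=\int_{\langle v,u\rangle>0}\langle v,u\rangle\,h_K^{p-1}\,d\sigma\le R^{\,p-1}\int_{\langle v,u\rangle>0}\langle v,u\rangle^{p}\,d\sigma$, which on the core $\{\langle v,u\rangle\le-\tfrac12\}$ — where $h_K<\varepsilon$ forces $h_K^{p-1}\ge c\,\varepsilon^{p-1}$ — bounds the measure of $E_{\varepsilon}$ there by $c'(\varepsilon/R)^{1-p}$; (iii) convexity with $V(K)=\omega_n$: since $K\subseteq B_R\cap\bigcap_{u\in E_{\varepsilon}}\{z:\langle z,u\rangle\le\varepsilon\}$ must still have volume $\omega_n$, $E_{\varepsilon}$ cannot fill that hemisphere — it omits a spherical cap of radius bounded below in terms of $\omega_n,R,\varepsilon$, which bounds the remaining equator-adjacent part of $E_{\varepsilon}$ and, via a Steinhagen-type comparability of inradius and minimal width, also yields the above lower bound on $\min h_K$. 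Inserting these bounds shows $\int h_K^{p}\,d\sigma\to0$ (resp. $\int_{\mathbb{S}^{n-1}}-\log h_K\,d\sigma\to-\infty$) as $d(K)\to\infty$, contradicting the hypothesis; in the even case (iii) simplifies (center the inscribed ball at $\vec o$) and (ii) is replaced by the stronger $E_{\varepsilon}\subseteq\{|\langle v,u\rangle|<\varepsilon/R\}$.

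The step I expect to be the main obstacle is exactly the combined estimate in the previous paragraph near the minimum of $h_K$ — equivalently, near the equator of $v$ — where the weight $h_K^{p-1}$ blows up and one must show that the measure of directions carrying that blow-up shrinks fast enough to beat it; this is where convexity, the volume constraint, and the entropy-point balancing have to be used together quantitatively (the piece with $\langle v,u\rangle$ small is not controlled by the $v$-pairing alone and needs the volume/width input). The resulting bound on $d(K)$ degenerates as $p\to -n^{+}$, which is consistent with $p=-n$ being genuinely excluded from the lemma: a long thin cigar of volume $\omega_n$ has $\int h_K^{-n}\,d\sigma$ bounded below while having unbounded diameter. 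For $p=0$ the same mechanism is precisely what prevents $\log h_K$ from being very negative on a whole hemisphere — something volume and diameter alone would permit, but which is impossible once $\vec o$ is the entropy point.
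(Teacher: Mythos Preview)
Your argument for $p>0$ is correct and is essentially the cap estimate behind the Guan--Ni result the paper cites. The difficulty is entirely in the range $-n<p\le0$, and there your scheme has a genuine gap. In mechanism~(ii) you bound the right-hand side of the balancing identity by
\[
\int_{\langle v,u\rangle>0}\langle v,u\rangle\,h_K^{p-1}\,d\sigma
\;\le\;R^{\,p-1}\int_{\langle v,u\rangle>0}\langle v,u\rangle^{p}\,d\sigma,
\]
but for $p\le-1$ the integral on the right diverges (near the equator $\langle v,u\rangle^{p}$ is not integrable on $\mathbb{S}^{n-1}$), so (ii) gives no information precisely in the range $-n<p\le-1$ where the lemma is most delicate. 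Your mechanism~(iii) is meant to cover the equator-adjacent region, but as stated it only says $E_\varepsilon$ cannot fill a hemisphere; it does not supply the quantitative decay $\sigma(E_\varepsilon)\le C(\varepsilon/R)^\gamma$ you need in the layer-cake, and the crude width bound $\min h_K\gtrsim R^{-(n-1)}$ is too weak to close the estimate. (A secondary issue: your reduction to $\varphi\equiv1$ is fine for the integral hypotheses, but the weighted balancing identity $\int u\,\varphi^{-1}h_K^{p-1}\,d\sigma=0$ does \emph{not} imply the unweighted one, so if you insist on using balancing you must carry $\varphi$ through.)

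The paper avoids the balancing identity entirely for $p<0$. The key observation you are missing is that $e_p$ is the \emph{minimizer} of $x\mapsto\int(h_K-x\cdot u)^p\,d\sigma$ when $p<0$, so after translating $K$ to have centroid at the origin one still has $a\le\int h_K^p\,d\sigma$. Now John's ellipsoid $E$ is origin-centred, $h_{E}/n\le h_K\le h_E$, and one decomposes $\mathbb{S}^{n-1}$ into $\{h_E<\varepsilon\}$, $\{\varepsilon\le h_E\le1/\varepsilon\}$, $\{h_E>1/\varepsilon\}$. The first set is handled by H\"older against the exponent $-n$: $\int_{S_1}(h_E/n)^p\,d\sigma\le\bigl(\int(h_E/n)^{-n}\,d\sigma\bigr)^{-p/n}|S_1|^{(n+p)/n}$, and $\int h_E^{-n}\,d\sigma$ is uniformly bounded because it equals $nV(E^*)$ and $V(E)\ge V(K)=\omega_n$. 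As $d(K)\to\infty$ the ellipsoid degenerates, forcing $|S_1|,|S_2|\to0$, while $\int_{S_3}\le c\,\varepsilon^{-p}$; letting $\varepsilon\to0$ contradicts $a\le\int(h_E/n)^p\,d\sigma$. This is the argument you should replace your (ii)--(iii) with.
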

\begin{proof}
We prove only the first set of claims. The proof of Guan and Ni \cite[Corollary 2.5]{Guan NI 2013} extends to the interval $p\in(-1,\infty).$ For $-n<p<0,$ we argue as follows \cite[p. 58]{Chou Wang}: Suppose, on the contrary, that there is a sequence of convex bodies $\{K_i\}$ satisfying the uniform lower bound, but $d(K_i)\to\infty.$ Since the above inequalities are invariant under any translation, we may assume without loss of generality that $K_i$ are centered at the origin. Let $E_i$ denote John's ellipsoid of $K_i$. That is, $\frac{1}{n}E_i\subset K_i\subseteq E_i.$ Therefore, $\frac{h_{E_i}}{n}<h_{K_i}\leq h_{E_i}.$ For any fixed $\varepsilon>0$, we decompose $\mathbb{S}^{n-1}$ into three sets as follows:
\[S_1:=\mathbb{S}^{n-1}\cap \{h_{E_i}<\varepsilon\},~~S_2:=\mathbb{S}^{n-1}\cap \{\varepsilon<h_{E_i}<\frac{1}{\varepsilon}\},~~\&~S_3:=\mathbb{S}^{n-1}\cap \{h_{E_i}>\frac{1}{\varepsilon}\}.\]
On the one hand,
\[a\leq\int_{\mathbb{S}^{n-1}}(h_{K_i}(u)-e_p(K_i)\cdot u)^pd\sigma\leq \int_{\mathbb{S}^{n-1}}h_{K_i}^pd\sigma< \int_{\mathbb{S}^{n-1}}(\frac{h_{E_i}}{n})^pd\sigma.\]
On the other hand, as $d(K_i)\to\infty:$
\begin{align*}
\int_{S_1}(\frac{h_{E_i}}{n})^pd\sigma&\leq \left(\int_{S_1}(\frac{h_{E_i}}{n})^{-n}d\sigma\right)^{\frac{-p}{n}}|S_1|^{\frac{p+n}{n}}\\
&\leq c_1|S_1|^{\frac{p+n}{n}}\to 0,
\end{align*}
\[|S_2|\to 0.\]
Also, we have
\begin{align*}
\int_{S_3}(\frac{h_{E_i}}{n})^pd\sigma&\leq \int_{S_3}(\frac{1}{n\varepsilon})^pd\sigma=(\frac{1}{n\varepsilon})^p|S_3|\leq c_3\varepsilon^{-p}.
\end{align*}
Therefore, for any $\varepsilon>0$ we get
\[a\leq o(1)+c_3\varepsilon^{-p}.\]
Sending $\varepsilon\to0$, we reach a contradiction.
\end{proof}
\section{Continuity of entropy map and entropy functional}
\begin{theorem}[Continuity of entropy map]\label{thm: cont entropy points}
The following maps are continuous.
\begin{itemize}
  \item $e_p:(\mathcal{K}^n,d_{\mathcal{H}})\to \mathbb{R}^n,$ for $-n\le p<\infty,$
  \item $e_p^{\varphi}: (\mathcal{K}^n,d_{\mathcal{H}})\to \mathbb{R}^n,$ for $-n\leq p\leq-n+1~\mbox{and}~ \varphi\in \mathcal{S}^+.$
\end{itemize}
Here $d_{\mathcal{H}}$ denotes the Hausdorff distance.
\end{theorem}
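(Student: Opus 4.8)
The plan is to prove \emph{sequential} continuity: if $K_i\to K$ in the Hausdorff metric, then $e_p(K_i)\to e_p(K)$ (and similarly for $e_p^{\varphi}$); since the target $\mathbb{R}^n$ is metrizable, this suffices. Throughout I abbreviate $\Phi_K(x):=\int_{\mathbb{S}^{n-1}}(h_K(u)-x\cdot u)^p\,d\sigma$ for $p\neq 0$ and $\Phi_K(x):=\int_{\mathbb{S}^{n-1}}-\log(h_K(u)-x\cdot u)\,d\sigma$ for $p=0$. By Lemma \ref{lem: 1}, $e_p(K)$ is the unique minimizer of $\Phi_K$ over $K$ when $p>1$ or $-n\le p\le 0$, the unique maximizer over $K$ when $0<p<1$, and in every case $e_p(K)\in\operatorname{int}K$. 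For $x\in\operatorname{int}K$ one has $h_K(u)-x\cdot u\ge\operatorname{dist}(x,\partial K)>0$, so $\Phi_K(x)$ is finite; for $x\in\partial K$ the value $\Phi_K(x)\in(-\infty,+\infty]$ is still well defined. Note also that $\Phi_K$ is convex (for $p>1$ and for $p\le 0$) or concave (for $0<p<1$), and strictly so along every segment, because $\{u:(x_1-x_2)\cdot u=0\}$ has $\sigma$-measure zero; hence the extremizer of $\Phi_K$ over the convex body $K$ is genuinely unique.

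Two elementary inputs come first. (i) $K_i\to K$ in $d_{\mathcal H}$ is equivalent to $h_{K_i}\to h_K$ uniformly on $\mathbb{S}^{n-1}$; in particular the $K_i$ eventually lie in a fixed ball, so $x_i:=e_p(K_i)\in K_i$ is a bounded sequence and, passing to a subsequence, $x_i\to x_0$ for some $x_0\in K$. (ii) Since $e_p(K)\in\operatorname{int}K$, for all large $i$ one has $\operatorname{dist}(e_p(K),\partial K_i)\ge\delta/2$ for a fixed $\delta>0$, so the integrand defining $\Phi_{K_i}(e_p(K))$ stays in a fixed compact subinterval of $(0,\infty)$; by bounded convergence $\Phi_{K_i}(e_p(K))\to\Phi_K(e_p(K))$, and likewise $\Phi_{K_i}(y)\to\Phi_K(y)$ for every $y\in\operatorname{int}K$.

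The heart of the matter is to show $x_0=e_p(K)$. When $p>1$ or $0<p<1$, $\Phi_K$ is finite and continuous on all of $K$, and the argument is short. In the minimization case $p>1$: minimality gives $\Phi_{K_i}(x_i)\le\Phi_{K_i}(e_p(K))$, so $\limsup_i\Phi_{K_i}(x_i)\le\Phi_K(e_p(K))$ by (ii), while Fatou's lemma (the integrands are nonnegative) together with $h_{K_i}(u)-x_i\cdot u\to h_K(u)-x_0\cdot u$ gives $\liminf_i\Phi_{K_i}(x_i)\ge\Phi_K(x_0)$; hence $\Phi_K(x_0)\le\Phi_K(e_p(K))$ and uniqueness of the minimizer over $K$ forces $x_0=e_p(K)$. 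In the maximization case $0<p<1$: maximality gives $\Phi_{K_i}(x_i)\ge\Phi_{K_i}(e_p(K))\to\Phi_K(e_p(K))$, while the reverse Fatou lemma (the integrands are bounded above by $(\operatorname{diam}(K_i))^p\le C$) gives $\limsup_i\Phi_{K_i}(x_i)\le\Phi_K(x_0)$; hence $\Phi_K(x_0)\ge\Phi_K(e_p(K))$ and uniqueness of the maximizer forces $x_0=e_p(K)$. The genuinely delicate range is $-n\le p\le 0$: as before, minimality together with Fatou (the integrands are nonnegative for $p<0$, and bounded below by $-\log\operatorname{diam}(K_i)\ge -C$ for $p=0$) yields $\Phi_K(x_0)\le\Phi_K(e_p(K))<\infty$, so it is enough to rule out $x_0\in\partial K$. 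For this I would use the mechanism already present in the proof of Lemma \ref{lem: 1}: at any boundary point $x_0$ there is a segment issuing into $\operatorname{int}K$ along which $\Phi_K$ has a strictly negative (possibly $-\infty$) one-sided derivative at $x_0$, the governing sign being $\operatorname{sgn}(p(1-p))<0$; hence $\Phi_K(x_0)>\Phi_K(x)$ for interior points $x$ near $x_0$, whence $\Phi_K(x_0)>\min_{x\in\operatorname{int}K}\Phi_K(x)=\Phi_K(e_p(K))$, contradicting $\Phi_K(x_0)\le\Phi_K(e_p(K))$. Thus $x_0\in\operatorname{int}K$, and then $\Phi_K(x_0)\le\Phi_K(e_p(K))$ with uniqueness of the interior minimizer gives $x_0=e_p(K)$.

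Since every subsequence of $\{e_p(K_i)\}$ has a further subsequence converging to $e_p(K)$, the full sequence converges to $e_p(K)$, which proves continuity of $e_p$. The case of $e_p^{\varphi}$ for $-n\le p\le -n+1$ is handled verbatim with $\Phi_K^{\varphi}(x):=\int_{\mathbb{S}^{n-1}}\varphi(u)^{-1}(h_K(u)-x\cdot u)^p\,d\sigma$ replacing $\Phi_K$: the weight $\varphi$ is bounded above and below by positive constants, so it changes none of the estimates, and Lemma \ref{lem: 1} again furnishes a unique, interior minimizer. The main obstacle I foresee is precisely the exclusion of $x_0\in\partial K$ in the minimization range $-n\le p\le 0$: for $p$ not too negative (for instance at a point in the relative interior of a flat boundary piece of $K$ when $-(n-1)<p<0$) the functional $\Phi_K$ need not be infinite on $\partial K$, so no crude coercivity bound is available, and one is forced to compare $\Phi_K(x_0)$ with the interior minimum directly, combining Fatou's lemma with the infinitesimal monotonicity extracted from the proof of Lemma \ref{lem: 1}.
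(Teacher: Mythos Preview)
Your proof is correct and follows essentially the same approach as the paper's: pass to a subsequential limit $x_0\in K$ of the entropy points, compare $\Phi_K(x_0)$ with $\Phi_K(e_p(K))$ by combining the optimality of $e_p(K_i)$ with Fatou's lemma (or reverse Fatou), and invoke uniqueness of the extremizer. You are in fact more explicit than the paper about the one subtle point in the range $-n\le p\le 0$---ruling out $x_0\in\partial K$---which the paper dispatches with a terse ``this is a contradiction,'' relying implicitly on Lemma~\ref{lem: 1}'s conclusion that no boundary point minimizes $\Phi_K$.
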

\begin{proof}
We address the case $p\in [-n,1),~p\neq 0$ and the statement for $e_p.$ Suppose $p\in (0,1).$
Let $\{K_i\}$ be a family of convex bodies that converges to $K_{\infty}$ as $i$ approaches $\infty.$ Suppose that for a subsequence $i_j$ that $\lim\limits_{j\to\infty}e_p(K_{i_j})=q\neq e_p(K_{\infty}),$ where $q\in K_{\infty}.$ Note that $e_p(K_{\infty})\in \operatorname{int} K_{\infty}$ implies $e_p(K_{\infty})\in \operatorname{int} K_{i}$ if $i$ large is enough. Thus, for any $j$ that is large enough, we have
\[\int_{\mathbb{S}^{n-1}}(h_{K_{i_j}}(u)-e_p(K_{i_j})\cdot u)^pd\sigma > \int_{\mathbb{S}^{n-1}}(h_{K_{i_j}}(u)-e_p(K_{\infty})\cdot u)^pd\sigma. \]
Taking the limit from both sides, we get
\[\int_{\mathbb{S}^{n-1}}(h_{K_{\infty}}(u)-q\cdot u)^pd\sigma \geq \int_{\mathbb{S}^{n-1}}(h_{K_{\infty}}(u)-e_p(K_{\infty})\cdot u)^pd\sigma.\]
This contradicts that $e_p(K_{\infty})$ is the unique maximizer of $\int_{\mathbb{S}^{n-1}}(h_{K_{\infty}}(u)-x\cdot u)^pd\sigma$ on $K_{\infty}$. Now we consider the case $-n\leq p<0.$
Note that
\begin{align*}
&\limsup_{j\to\infty}\int_{\mathbb{S}^{n-1}}(h_{K_{i_j}}(u)-e_p(K_{i_j})\cdot u)^pd\sigma\\
&\leq \limsup_{j\to\infty}\int_{\mathbb{S}^{n-1}}(h_{K_{i_j}}(u)-e_p(K_{\infty})\cdot u)^pd\sigma=\int_{\mathbb{S}^{n-1}}(h_{K_{\infty}}(u)-e_p(K_{\infty})\cdot u)^pd\sigma.
\end{align*}
On the other hand, using Fatou's lemma we get
\[\liminf_{j\to\infty}\int_{\mathbb{S}^{n-1}}(h_{K_{i_j}}(u)-e_p(K_{i_j})\cdot u)^pd\sigma\geq\int_{\mathbb{S}^{n-1}}(h_{K_{\infty}}(u)-q\cdot u)^pd\sigma.\]
Thus, we have
\[\int_{\mathbb{S}^{n-1}}(h_{K_{\infty}}(u)-e_p(K_{\infty})\cdot u)^pd\sigma\geq\int_{\mathbb{S}^{n-1}}(h_{K_{\infty}}(u)-q\cdot u)^pd\sigma.\]
This is a contradiction.
\end{proof}
\begin{remark}
For $\varphi\equiv 1$ and $p=-n$, Theorem \ref{thm: cont entropy points} was proved by Petty \cite[Lemma 2.2]{Petty 1975}.
\end{remark}
\begin{theorem}[Continuity of $\mathcal{A}_p$]\label{thm: cont ent func}
The following functionals are continuous.
\begin{itemize}
  \item $\mathcal{A}_p:(\mathcal{K}^n,d_{\mathcal{H}})\to \mathbb{R}^n,$ for $-n\le p<\infty,$
  \item $\mathcal{A}_p^{\varphi}: (\mathcal{K}^n,d_{\mathcal{H}})\to \mathbb{R}^n,$ for $-n\leq p\leq-n+1~\mbox{and}~ \varphi\in \mathcal{S}^+$,
  \item $\mathcal{A}_p^{\varphi}: (\mathcal{K}_e^n,d_{\mathcal{H}})\to \mathbb{R}^n,$ for $-n\leq p<\infty~\mbox{and}~ \varphi\in \mathcal{S}_e^+.$
\end{itemize}
\end{theorem}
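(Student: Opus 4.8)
The plan is to reduce everything to the already-established continuity of the entropy maps $e_p$ and $e_p^{\varphi}$ (Theorem \ref{thm: cont entropy points}) together with convergence of support functions. Let $K_i\to K_\infty$ in the Hausdorff metric. Since $V(\cdot)$ is continuous on $(\mathcal{K}^n,d_{\mathcal{H}})$, the volume factor in $\mathcal{A}_p$ causes no trouble, so the work is to show that $\int_{\mathbb{S}^{n-1}}(h_{K_i}(u)-e_p(K_i)\cdot u)^p\,d\sigma \to \int_{\mathbb{S}^{n-1}}(h_{K_\infty}(u)-e_p(K_\infty)\cdot u)^p\,d\sigma$ (and the analogous statement with the $\log$ for $p=0$, and with the weight $1/\varphi$ in the weighted cases). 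Write $g_i(u):=h_{K_i}(u)-e_p(K_i)\cdot u$, the support function of the translate $K_i-e_p(K_i)$. Hausdorff convergence $K_i\to K_\infty$ gives uniform convergence $h_{K_i}\to h_{K_\infty}$ on $\mathbb{S}^{n-1}$, and Theorem \ref{thm: cont entropy points} gives $e_p(K_i)\to e_p(K_\infty)$; hence $g_i\to g_\infty:=h_{K_\infty}(\cdot)-e_p(K_\infty)\cdot(\cdot)$ uniformly on $\mathbb{S}^{n-1}$.

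The case $p>0$ is then immediate: $g_i\to g_\infty$ uniformly, the map $t\mapsto t^p$ is continuous, and the integrands stay uniformly bounded (the $g_i$ are uniformly bounded since the $K_i$ lie in a fixed ball for large $i$), so dominated convergence finishes it; the $p=0$ case is the same, using uniform positivity of $g_i$ — which holds because $e_p(K_\infty)\in\operatorname{int}K_\infty$ forces a uniform positive lower bound on $g_\infty$, hence on $g_i$ for large $i$. The delicate case is $-n\le p<0$, where $t\mapsto t^p$ blows up as $t\to 0^+$ and the integral is improper. Here the integrands converge pointwise a.e. (in fact everywhere that $g_\infty>0$, which is $\sigma$-a.e.), so Fatou's lemma gives $\liminf_i \int g_i^p\,d\sigma \ge \int g_\infty^p\,d\sigma$. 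For the reverse inequality I would exploit the minimality property from Lemma \ref{lem: 1}: since $e_p(K_\infty)\in\operatorname{int}K_\infty\subset\operatorname{int}K_i$ for $i$ large, $\int g_i^p\,d\sigma \le \int (h_{K_i}(u)-e_p(K_\infty)\cdot u)^p\,d\sigma$, and the right-hand side converges to $\int g_\infty^p\,d\sigma$ by dominated convergence (the comparison translate $K_i-e_p(K_\infty)$ has support functions bounded below by a fixed positive constant for large $i$, since $e_p(K_\infty)$ sits in the interior of the limit body with room to spare). Combining the two bounds yields $\lim_i\int g_i^p\,d\sigma=\int g_\infty^p\,d\sigma$.

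For the weighted functionals $\mathcal{A}_p^{\varphi}$ the argument is verbatim the same, inserting the bounded, bounded-below factor $1/\varphi$, which does not affect any of the convergence steps; one uses the continuity of $e_p^{\varphi}$ from Theorem \ref{thm: cont entropy points} in the range $-n\le p\le -n+1$, and in the even case $K\in\mathcal{K}^n_e$ the entropy point is identically the origin, so only the uniform convergence $h_{K_i}\to h_{K_\infty}$ is needed. The main obstacle is precisely the reverse (limsup) inequality in the singular range $-n\le p<0$: one cannot invoke dominated convergence directly on the $g_i$ because no integrable dominating function for $g_i^p$ is available a priori near the zero set of the limiting support function. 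The resolution above — comparing against the \emph{fixed} competitor point $e_p(K_\infty)$ to get a one-sided bound whose limit is computable, and pairing it with Fatou on the other side — is the crux, and it is exactly the device already used in the proof of Theorem \ref{thm: cont entropy points}.
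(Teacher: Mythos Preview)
Your proof is correct and follows the paper's approach---reduce to the continuity of $e_p$ (Theorem~\ref{thm: cont entropy points}) together with the fact that $e_p(K_\infty)$ lies in the interior of $K_\infty$---but you take an unnecessary detour in the case $-n\le p<0$. The observation you already make for $p=0$, namely that $e_p(K_\infty)\in\operatorname{int}K_\infty$ forces $g_\infty\ge c>0$ and hence $g_i\ge c/2$ for large $i$ by uniform convergence, holds verbatim for every $p$ in the stated range; thus $g_i^p\le (c/2)^p$ is a dominating constant and dominated convergence applies directly, with no need for the Fatou/minimality sandwich. The paper's proof is accordingly a single sentence: continuity of $e_p$ plus the interior-point property suffices. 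Your worry about ``the zero set of the limiting support function'' is unfounded---that set is empty.
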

\begin{proof}
The first two claims follows from the continuity of $e_p(\cdot),~e_p^{\varphi}(\cdot) $, Theorem \ref{thm: cont entropy points}, and that $e_p,~e_p^{\varphi}$ are interior points. The last claim is trivial in view of our agreement that $e_p^{\varphi}(K)=\vec{o}$ whenever $K\in\mathcal{K}_e^n.$
\end{proof}
\begin{theorem} \label{thm: uniform lower bound on the support function}
 Fix $p$ and $0<a<\infty.$ Define the entropy class $S_{p,a}$ to be the set of all convex bodies $K$ such that $V(K)=\omega_n$ and
\[
\begin{cases}
e_p(K)=\vec{o},~\int_{\mathbb{S}^{n-1}}h_K^pd\sigma\leq a &\mbox{if } p\in (0,\infty)\\
e_p(K)=\vec{o},~\exp\left(\frac{1}{\omega_n}\int_{\mathbb{S}^{n-1}}-\log h_Kd\sigma\right)\geq a& \mbox{if } p=0\\
e_p(K)=\vec{o},~\int_{\mathbb{S}^{n-1}}h_K^pd\sigma\geq a>0 &\mbox{if } p\in (-n,0).
\end{cases}
\]
Then there exist $0<r,R<\infty$ depending only on $n,p,a$ such that for any $K\in S_{p,a}$ we have $r \leq h_{K}\leq R.$ Additionally, similar conclusions hold for the following sets:
\begin{itemize}
  \item when $\varphi\in \mathcal{S}^+$ and $-n<p\leq-n+1$, define the entropy class $S_{\varphi,p,a}$ to be the set of all convex bodies $K$ such that $V(K)=\omega_n$ and
\[e_p^{\varphi}~(K)=\vec{o},~\int_{\mathbb{S}^{n-1}}\frac{h_K^p}{\varphi}d\sigma\geq a,\]
  \item when $\varphi\in \mathcal{S}^+_e,$ define $S_{e,\varphi,p,a}$ to be the set of all origin-symmetric convex bodies $K$ such that $V(K)=\omega_n$ and
\[
\begin{cases}
\int_{\mathbb{S}^{n-1}}\frac{h_K^p}{\varphi}d\sigma\leq a &\mbox{if } p\in (0,\infty)\\
\exp\left(\frac{1}{\omega_n}\int_{\mathbb{S}^{n-1}}-\frac{\log h_K}{\varphi}d\sigma\right)\geq a &\mbox{if } p=0\\
\int_{\mathbb{S}^{n-1}}\frac{h_K^p}{\varphi}d\sigma\geq a &\mbox{if } p\in (-n,0).
\end{cases}
\]
\end{itemize}
\end{theorem}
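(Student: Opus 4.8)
The plan is to get the upper bound straight out of the extrinsic-diameter estimate, and the lower bound from a compactness argument resting on the interiority and continuity of the entropy point.

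\emph{Upper bound.} In each of the three classes the point appearing in the definition — $e_p(K)$ for $S_{p,a}$, $e_p^{\varphi}(K)$ for $S_{\varphi,p,a}$, and $\vec o$ (which is $e_p^{\varphi}(K)$ by our convention) for $S_{e,\varphi,p,a}$ — is the origin, so the prescribed integral inequality is exactly one of the hypotheses of Lemma~\ref{lem: bound diam}. Combined with $V(K)=\omega_n$, that lemma bounds the extrinsic diameter: $d(K)\le R$ with $R$ depending only on $n,p,a$ (and $\varphi$ in the last two classes). Since the origin lies in $K$ — being either an entropy point, hence interior, or the centre of symmetry — one has $h_K(u)=\sup_{x\in K}x\cdot u\le\sup_{x\in K}|x|\le d(K)\le R$ for all $u\in\mathbb{S}^{n-1}$, which is the desired upper bound.

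\emph{Lower bound.} Here I would argue by contradiction. If the bound failed for one of the classes, pick $\{K_i\}$ in it with $m_i:=\min_{\mathbb{S}^{n-1}}h_{K_i}\to 0$. By the upper bound the $K_i$ sit in a fixed ball, so by Blaschke selection a subsequence converges in the Hausdorff metric to some $K_\infty\in\mathcal{K}^n$, i.e. $h_{K_i}\to h_{K_\infty}$ uniformly. Continuity of volume gives $V(K_\infty)=\omega_n>0$, so $\operatorname{int}K_\infty\neq\emptyset$; and since $\vec o\in K_i$ for every $i$, also $\vec o\in K_\infty$ and $h_{K_\infty}\ge 0$. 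Taking $u_i$ with $h_{K_i}(u_i)=m_i$ and, along a further subsequence, $u_i\to u_\infty$, uniform convergence plus continuity of $h_{K_\infty}$ give $h_{K_\infty}(u_\infty)=\lim_i m_i=0$, which together with $h_{K_\infty}\ge 0$ forces $\vec o\in\partial K_\infty$. Now for $S_{p,a}$ and $S_{\varphi,p,a}$, Theorem~\ref{thm: cont entropy points} yields $e_p(K_\infty)=\lim_i e_p(K_i)=\vec o$ (resp. $e_p^{\varphi}(K_\infty)=\vec o$), contradicting Lemma~\ref{lem: 1}, which places this point in $\operatorname{int}K_\infty$; while for $S_{e,\varphi,p,a}$ the limit $K_\infty$ is origin-symmetric with non-empty interior, so $\vec o\in\operatorname{int}K_\infty$ (the midpoint of an interior point $x$ and $-x$), again contradicting $\vec o\in\partial K_\infty$. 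This produces the uniform lower bound $h_K\ge r$ with $r$ depending only on $n,p,a$ (and $\varphi$).

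The main thing to be careful about is the passage to the limit: one must know that $K_\infty$ is non-degenerate, which is precisely what the normalisation $V(K_i)\equiv\omega_n$ and continuity of volume guarantee, and that the entropy map stays continuous at $K_\infty$ even though the integral defining it may become infinite there — but this is already established in Theorem~\ref{thm: cont entropy points}. The rest is a routine bookkeeping through Lemmas~\ref{lem: bound diam} and~\ref{lem: 1}.
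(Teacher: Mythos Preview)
Your argument is correct and follows the same route as the paper: the upper bound comes directly from Lemma~\ref{lem: bound diam}, and the lower bound from a Blaschke-selection contradiction in which continuity of the entropy point (Theorem~\ref{thm: cont entropy points}) forces $\vec o$ to be simultaneously an interior point (Lemma~\ref{lem: 1}) and a boundary point of the limit body. The only cosmetic difference is that for the origin-symmetric class $S_{e,\varphi,p,a}$ the paper argues directly---fixed volume plus bounded diameter bounds the in-radius below, and the maximal inscribed ball of an origin-symmetric body is centered at the origin---whereas you fold this case into the same compactness contradiction; both are fine.
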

\begin{proof}
The last set of claims follows easily:
$S_{e,\varphi,p,a}\subset \mathcal{K}_e^n$ and $\{d(K)\}_{K\in S_{e,\varphi,p,a}}$ is uniformly bounded by Lemma \ref{lem: bound diam}. Therefore, since volume is fixed, in-radii of convex bodies in $S_{e,\varphi,p,a}$ are uniformly bounded below. Moreover, for $K\in \mathcal{K}_e^n$, the ball with maximal radius enclosed by $K$ must be centered at the origin. Next we prove the remaining claims.
Since volume is normalized and $\{d(K)\}_{K\in S_{\varphi,p,a}}$ is uniformly bounded by Lemma \ref{lem: bound diam}, it is enough to prove that there exists  $r>0$ such that $h_K>r$ for any $K\in S_{\varphi,p,a}.$
Suppose, on the contrary, that there is a sequence of convex bodies $\{K_i\}\subset S_{\varphi,p,a}$ such that $\operatorname{dist}(\vec{o},\partial K_i)\to 0.$ By Lemma \ref{lem: bound diam}, $\{d(K_i)\}$ is uniformly bounded above. Thus, by Blaschke's selection theorem, $\{K_i\}$ converges (passing to a further subsequence if necessary) to $K_{\infty}$ in the Hausdorff distance and, additionally, $\vec{o}\in \partial K_{\infty}.$ On the other hand, by Theorems \ref{thm: cont entropy points} and \ref{thm: cont ent func}, $K_{\infty}\in S_{\varphi,p,a}.$ This is a contradiction.
\end{proof}
\begin{corollary}\label{cor: uniform lower and upper on support functions}
Under the assumptions of Theorems \ref{thm: 3a}, A1 with $p\neq -2$, A2, and A3, there exist $r,R$ such that
$0<r\leq h_{\tilde{K}_t}\leq R<\infty.$
\end{corollary}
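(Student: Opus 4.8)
The plan is to place each normalized solution $\tilde K_t$ into one of the entropy classes of Theorem \ref{thm: uniform lower bound on the support function} with a parameter $a$ that does not depend on $t$; the asserted $r,R$ then follow immediately. Throughout, fix whichever of the four theorems is under consideration, and write $p$ for its exponent and $\varphi$ for its weight ($\varphi\equiv1$ in A1, $\varphi\in\mathcal{S}^+$ in A3, $\varphi\in\mathcal{S}^+_e$ in A2 and Theorem \ref{thm: 3a}). Since $V(\tilde K_t)=V(B)=\omega_n$ for every $t$ by construction, only two things need checking: that $\tilde K_t$ is correctly centered, and that it obeys the integral inequality defining membership in the appropriate class $S_{p,a}$, $S_{\varphi,p,a}$, or $S_{e,\varphi,p,a}$.

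For the centering I would proceed case by case. In the A1 setting $\varphi\equiv1$ and the integral hypothesis on $K_0$ is precisely $e_p(K_0)=\vec o$, so Lemma \ref{lem: 8} gives $e_p(K_t)=\vec o$, and since $e_p$ scales with the body, $e_p(\tilde K_t)=\vec o$. In the A3 setting $-n\le p\le-n+1$ (here $n=2$) and $e^{\varphi}_p(K_0)=\vec o$ by hypothesis, so the second bullet of Lemma \ref{lem: 8} gives $e^{\varphi}_p(\tilde K_t)=\vec o$. In the A2 and Theorem \ref{thm: 3a} settings $K_0$ is origin-symmetric and $\varphi$ even, so the evolution equation (\ref{eq: flow4}) for the support function is invariant under $u\mapsto-u$ and, by uniqueness, $K_t$ and hence $\tilde K_t$ stays origin-symmetric; thus $\tilde K_t\in\mathcal{K}^n_e$ and $e^{\varphi}_p(\tilde K_t)=\vec o$ by our convention.

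For the integral bound I would use monotonicity. The functional $\mathcal{A}_p$ (A1 setting) and $\mathcal{A}^{\varphi}_p$ (A2, A3, Theorem \ref{thm: 3a} settings) is dilation-invariant and, by Lemma \ref{lem: monotonicity}, nondecreasing along the flow; writing $\mathcal{A}$ for the relevant one,
\[
\mathcal{A}(\tilde K_t)=\mathcal{A}(K_t)\ \ge\ \mathcal{A}(K_0)=:A_0\in(0,\infty)\qquad\text{for all }t .
\]
Feeding in $V(\tilde K_t)=\omega_n$ and the centering just established, the definition of $\mathcal{A}$ reduces (for $p\neq0$) to $\mathcal{A}(\tilde K_t)=\omega_n\bigl(\int_{\mathbb{S}^{n-1}}h_{\tilde K_t}^p\,d\sigma\bigr)^{-n/p}$, respectively $\omega_n\bigl(\int_{\mathbb{S}^{n-1}}\varphi^{-1}h_{\tilde K_t}^p\,d\sigma\bigr)^{-n/p}$, so the inequality $\mathcal{A}(\tilde K_t)\ge A_0$ is, after solving for the integral, exactly the defining inequality of $S_{p,a}$ (resp. $S_{\varphi,p,a}$, $S_{e,\varphi,p,a}$) with $a:=(A_0/\omega_n)^{-p/n}$: an upper bound on the integral when $p\in(0,\infty)$ and a positive lower bound when $p\in(-n,0)$, the direction flipping correctly since $\sgn(-n/p)=-\sgn(p)$. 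Because $A_0$ depends only on $K_0$, the number $a$ depends only on $n,p,\varphi$ and the initial datum; the case $p=0$ is handled identically using the exponential form of $\mathcal{A}_0$, $\mathcal{A}^{\varphi}_0$. Theorem \ref{thm: uniform lower bound on the support function} then produces $0<r\le R<\infty$, depending only on $n,p,\varphi,a$ and hence only on the initial data, with $r\le h_{\tilde K_t}\le R$ for all $t$.

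I expect no serious obstacle here: the content is an assembly of Lemma \ref{lem: 8}, Lemma \ref{lem: monotonicity}, and Theorem \ref{thm: uniform lower bound on the support function}, and the only care needed is bookkeeping — matching each theorem to the right class and centering point, and tracking the sign of the exponent $-n/p$ when converting a lower bound on $\mathcal{A}$ into a bound on the integral. I would also note explicitly why A1 must exclude $p=-2$: there $p=-2=-n$ in $\mathbb{R}^2$, $\mathcal{A}_{-2}$ is $\mathrm{SL}(2)$-invariant, so no uniform bound on $h_{\tilde K_t}$ is possible — consistent with the elliptical asymptotic shape in A1 and with the strict inequality $p>-n$ in Lemma \ref{lem: bound diam}.
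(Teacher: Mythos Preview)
Your proposal is correct and follows essentially the same approach as the paper's own proof, which simply invokes Lemma \ref{lem: 8}, Lemma \ref{lem: monotonicity}, and Theorem \ref{thm: uniform lower bound on the support function} in succession. You have fleshed out the bookkeeping (matching each theorem to its entropy class, verifying scale-invariance of $\mathcal{A}_p$, and tracking the sign of $-n/p$) that the paper leaves implicit, and your remark on why $p=-2$ must be excluded is a helpful addition.
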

\begin{proof}
By Lemma \ref{lem: 8}, entropy points remain at the origin. By Lemma \ref{lem: monotonicity}, $K_t$ for $t>0$ belongs to the same entropy class as $K_0$. Therefore, the claim follows from Theorem \ref{thm: uniform lower bound on the support function}.
\end{proof}
\begin{theorem}\label{thm: converg curvature image}
The following statements are true:
\begin{itemize}
  \item Let $-n\leq p<\infty$, and assume $\{K_i\}\subset \mathcal{K}_0^n$ with $e_p(K_i)=\vec{o}$ converges in the Hausdorff distance to $K_{\infty}.$
      Then $\{\Lambda_p K_i\}$ converges in the Hausdorff distance to $\Lambda_p K_{\infty}.$
    \item Let $-n\leq p<\infty,~\varphi\in\mathcal{S}^+_e$, and assume that $\{K_i\}\subset \mathcal{K}_e^n$ converges in the Hausdorff distance to $K_{\infty}.$
        Then $\{\Lambda_p^{\varphi} K_i\}$ converges in the Hausdorff distance to $\Lambda_p^{\varphi} K_{\infty}.$
  \item Let $-n\leq p\leq-n+1,~\varphi\in \mathcal{S}^+,$ and assume $\{K_i\}\subset \mathcal{K}_0^n$ with $e_p^{\varphi}(K_i)=\vec{o}$ converges in the Hausdorff distance to $K_{\infty}.$
      Then $\{\Lambda_p^{\varphi} K_i\}$ converges in the Hausdorff distance to $\Lambda_p^{\varphi} K_{\infty}.$
\end{itemize}
\end{theorem}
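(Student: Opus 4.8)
The plan is to upgrade the Hausdorff convergence $K_i\to K_\infty$ to weak convergence of the surface area measures that define the curvature images, then to invoke uniqueness and stability for the Minkowski problem, and finally to pin down the translation via centroids. I treat the three cases simultaneously, writing $L_i$ for $\Lambda_pK_i$ or $\Lambda_p^\varphi K_i$, $L_\infty$ for $\Lambda_pK_\infty$ or $\Lambda_p^\varphi K_\infty$, and $f_i$ for the density with respect to $\sigma$ of the measure prescribed in (\ref{def: p curvature}) or the curvature function prescribed in (\ref{def: p phi curvature}).

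\emph{Step 1 (the shift is at the origin).} In the first and third cases $e_p(K_i)=\vec o$ (resp.\ $e_p^\varphi(K_i)=\vec o$); since $e_p$ and $e_p^\varphi$ are continuous on $(\mathcal K^n,d_{\mathcal H})$ by Theorem \ref{thm: cont entropy points}, the same holds for $K_\infty$, while in the second case it holds by our convention. In all cases the relevant entropy point lies in the interior (Lemma \ref{lem: 1}), so $\vec o\in\operatorname{int}K_\infty$ and hence $h_{K_\infty}\ge c_0>0$ on $\mathbb S^{n-1}$.

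\emph{Step 2 (uniform convergence of densities).} Hausdorff convergence is equivalent to $h_{K_i}\to h_{K_\infty}$ uniformly on $\mathbb S^{n-1}$, and $V(K_i)\to V(K_\infty)$. By Step 1, for all large $i$ one has $c_0/2\le h_{K_i}\le C_0$; since $\varphi$ is bounded above and below and $\int_{\mathbb S^{n-1}}h_{K_i}^p\,d\sigma\to\int_{\mathbb S^{n-1}}h_{K_\infty}^p\,d\sigma\in(0,\infty)$, the densities $f_i$ converge to $f_\infty$ uniformly on $\mathbb S^{n-1}$ and satisfy $0<\lambda\le f_i,f_\infty\le\Lambda<\infty$ for all large $i$. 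In particular $dS_{L_i}=f_i\,d\sigma\to f_\infty\,d\sigma=dS_{L_\infty}$ weakly (in fact in total variation).

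\emph{Step 3 (uniform bound on $L_i$).} By definition of the mixed volume, $V_1(L_i,K_i)=V(K_i)$ (Remark \ref{rem: equality mixed minkowski}), so Minkowski's inequality gives $V(K_i)^n=V_1(L_i,K_i)^n\ge V(L_i)^{n-1}V(K_i)$, i.e.\ $V(L_i)\le V(K_i)$, which is uniformly bounded. Translating $L_i$ so that its centroid is the origin (so $h_{L_i}\ge0$), one gets $V(L_i)=\frac1n\int_{\mathbb S^{n-1}}h_{L_i}f_i\,d\sigma\ge\frac\lambda n\int_{\mathbb S^{n-1}}h_{L_i}\,d\sigma$, and the last integral is, up to a dimensional constant, the mean width of $L_i$, which dominates a dimensional multiple of $d(L_i)$ since $L_i$ contains a chord of length $d(L_i)$ and the mean width is monotone under inclusion. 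Hence the diameters $d(L_i)$ are uniformly bounded.

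\emph{Step 4 (conclusion).} By Blaschke's selection theorem, any subsequence of $\{L_i\}$ (centered at the origin) has a Hausdorff-convergent sub-subsequence, with limit $L'$. Since $K\mapsto S_K$ is weakly continuous \cite{Schneider}, $S_{L'}=dS_{L_\infty}$, so by uniqueness in Minkowski's existence theorem $L'$ is a translate of $L_\infty$, hence equals the centered copy of $L_\infty$. Thus the whole centered sequence converges to $L_\infty$. Finally, the representatives in the statement satisfy $\operatorname{cent}(\Lambda_pK_i)=\operatorname{cent}(K_i)$ (resp.\ for $\Lambda_p^\varphi$) and $\operatorname{cent}(K_i)\to\operatorname{cent}(K_\infty)$, so the translation vectors converge and the desired Hausdorff convergence follows for the original representatives.

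The hard part will be Step 3: one has to rule out that the bodies $\Lambda_pK_i$ collapse or escape to infinity. The uniform two-sided density bound from Step 2, together with the identity $V_1(\Lambda_pK_i,K_i)=V(K_i)$ and Minkowski's inequality, is exactly what controls $V(L_i)$ and, via the mean width, the diameters; the remaining ingredients (weak continuity of $S_K$, uniqueness in the Minkowski problem, and continuity of centroids and of volume in $d_{\mathcal H}$) are classical.
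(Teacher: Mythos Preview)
Your proof is correct and follows essentially the same route as the paper: uniform two–sided bounds on the support functions (from continuity of the entropy point), hence on the prescribed densities, then Blaschke selection, weak continuity of surface area measures, Minkowski uniqueness up to translation, and finally the centroid normalisation to fix the translate. The only point where you differ is Step~3: the paper obtains the diameter bound (and a lower volume bound ensuring the limit is a genuine body) by quoting Cheng--Yau \cite[Lemmas~3,~4]{ChYau}, while you give a self-contained argument via $V_1(L_i,K_i)=V(K_i)$, Minkowski's inequality, and the mean-width estimate $\int h_{L_i}\,d\sigma\le \tfrac{n}{\lambda}V(L_i)$. Your route is slightly more elementary; note only that in Step~4 you should remark that the subsequential limit $L'$ is non-degenerate, which follows because its surface area measure $f_\infty\,d\sigma$ is absolutely continuous with positive density and hence not concentrated on a great subsphere.
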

\begin{proof}
We give the proof of the first statement.
By Theorem \ref{thm: cont entropy points}, $\vec{o}=e_p(K_i)\to e_p(K_{\infty})$. Since $e_p(K_{\infty})=\vec{o}$ is an interior point of $K_{\infty}$, we have $r \leq h_{K_i}\leq R$ for some $0<r,R<\infty$ independent of $i$. On the other hand, in view of (\ref{def: p curvature}), $\operatorname{cent}(\Lambda_p K_i)=\operatorname{cent}(K_i)$, and \cite[Lemma 3, Lemma 4]{ChYau}, we conclude that there exist $0<r',R'<\infty$ independent of $i$, such that $\Lambda_p K_i\subset B_{R'}$ and $V(\Lambda_p K_i)\geq \omega_nr'^{n}$ (note that in both Lemmas 3, 4 of \cite{ChYau} the assumption that $K$ is of class $ C^4$ is unnecessary; therefore, here we do not need to know the regularity of $\Lambda_pK_i$). Take a convergent subsequence of $\{\Lambda_p K_i\}$ and denote it again by $\{\Lambda_p K_i\}$. The limiting figure must be a convex body, say $\tilde{K}.$ Choose an arbitrary convex body $P$. From the weak continuity of surface area measures we get
\[\lim_{i\to\infty}\int_{\mathbb{S}^{n-1}}h_PdS_{\Lambda_pK_i}=\int_{\mathbb{S}^{n-1}}h_PdS_{\tilde{K}}=V_1(\tilde{K},P).\]
Moreover,
\begin{align*}
\lim_{i\to\infty}\int_{\mathbb{S}^{n-1}}h_PdS_{\Lambda_pK_i}&=
\lim_{i\to\infty}\left(\frac{V(K_i)}{\frac{1}{n}\int_{\mathbb{S}^{n-1}}h_{K_i}^pd\sigma}\int_{\mathbb{S}^{n-1}}h_Ph_{K_i}^{p-1}d\sigma\right)\\
&=\frac{V(K_{\infty})}{\frac{1}{n}\int_{\mathbb{S}^{n-1}}h_{K_{\infty}}^pd\sigma}\int_{\mathbb{S}^{n-1}}h_Ph_{K_{\infty}}^{p-1}d\sigma\\
&=\int_{\mathbb{S}^{n-1}}h_PdS_{\Lambda_pK_{\infty}}=V_1(\Lambda_pK_{\infty},P).
\end{align*}
Since $V_1(\Lambda_pK_{\infty},P)=V_1(\tilde{K},P)$ holds for any convex body $P,$ we conclude that $\Lambda_pK_{\infty}$ is a translation of $\tilde{K}$; see \cite[Theorem 8.1.2]{Schneider}. Furthermore, note that $\operatorname{cent}(K_{i})=\operatorname{cent}(\Lambda_pK_{i}),$ thus $\operatorname{cent}(\Lambda_pK_{\infty})=\operatorname{cent}(K_{\infty})=\operatorname{cent}(\tilde{K}).$ That is, no translation is needed; $\Lambda_pK_{\infty}=\tilde{K}.$ Finally, notice that the limit is independent of the convergent subsequence. The proof is complete.
\end{proof}
\begin{lemma}\label{lem: large ent}
Under the assumptions of Theorems \ref{thm: 1}, \ref{thm: 3a}, A1, A2, and A3 we have
\[
\begin{cases}
\lim\limits_{t\to T}\frac{1}{p}\log \int_{\mathbb{S}^{n-1}}\frac{h_{K_t}^p}{\varphi}d\sigma=\infty &\mbox{if } p\ne 0\\
\lim\limits_{t\to T}\int_{\mathbb{S}^{n-1}}\frac{\log h_{K_t}}{\varphi}d\sigma=\infty & \mbox{if } p=0.
\end{cases}
\]
\end{lemma}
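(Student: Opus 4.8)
The plan is to derive the divergence of the relevant entropy-type integral from the already-established fact that the bodies expand without bound, namely from $\lim_{t\to T}\max h_{K_t}=\infty$ (Proposition~\ref{prop: expansion to infty} and Lemma~\ref{lem: expansion to inftya}), combined with the monotonicity of $\mathcal{A}_p^{\varphi}$ (Lemma~\ref{lem: monotonicity}). First I would recall that, after the normalization producing $\tilde{K}_t$, the functional $\mathcal{A}_p^{\varphi}$ is scaling-invariant, so $\mathcal{A}_p^{\varphi}(K_t)=\mathcal{A}_p^{\varphi}(\tilde K_t)$; by Corollary~\ref{cor: uniform lower and upper on support functions} (or directly from the entropy class structure), $\tilde K_t$ ranges in a fixed compact family of convex bodies with $V(\tilde K_t)=\omega_n$ bounded away from degeneracy, so $\mathcal{A}_p^{\varphi}(K_t)$ stays bounded above by a constant $C$ independent of $t$ (here one uses continuity of $\mathcal{A}_p^{\varphi}$, Theorem~\ref{thm: cont ent func}, and Blaschke selection). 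Unwinding the definition of $\mathcal{A}_p^{\varphi}$, for $p\neq 0$ this says
\[
V(K_t)\left(\int_{\mathbb{S}^{n-1}}\frac{(h_{K_t}-e_p^{\varphi}\cdot u)^p}{\varphi}\,d\sigma\right)^{-\frac{n}{p}}\le C,
\]
and since $e_p^{\varphi}(K_t)=\vec o$ throughout (Lemma~\ref{lem: 8}, or the origin-symmetric convention), this is exactly a two-sided comparison between $\left(\int h_{K_t}^p/\varphi\,d\sigma\right)^{n/p}$ and $V(K_t)/C$.

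The second step is then to observe that $V(K_t)\to\infty$ as $t\to T$: when $p=-n$ this is Proposition~\ref{cor: volume to infty}; in the remaining planar cases it follows from $\max h_{K_t}\to\infty$ together with a fixed origin-centered disk inside $K_t$ (as in the proof of Proposition~\ref{cor: volume to infty}, inscribing a cone / triangle of unbounded height). Feeding $V(K_t)\to\infty$ into the inequality above gives, for $p>0$, that $\int h_{K_t}^p/\varphi\,d\sigma\ge c\,V(K_t)^{p/n}\to\infty$, hence $\frac1p\log\int h_{K_t}^p/\varphi\,d\sigma\to\infty$; for $-n<p<0$ the inequality reverses in the right direction, forcing $\int h_{K_t}^p/\varphi\,d\sigma\to\infty$ (the exponent $p/n$ is negative while $V(K_t)\to\infty$), and again $\frac1p\log$ of it $\to\infty$ because $p<0$. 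The case $p=0$ is handled identically starting from the definition of $\mathcal{A}_0^{\varphi}$, where boundedness of $V(K_t)\exp\!\big(\frac1{\omega_n}\int -\frac{\log h_{K_t}}{\varphi}d\sigma\big)$ combined with $V(K_t)\to\infty$ gives $\int \frac{\log h_{K_t}}{\varphi}d\sigma\to\infty$.

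The main obstacle, and the point requiring care, is justifying the \emph{upper} bound on $\mathcal{A}_p^{\varphi}(K_t)$ uniformly in $t$. One must be sure that the normalized bodies $\tilde K_t$ do not merely stay in a bounded region but in fact stay in a \emph{compact} family on which $\mathcal{A}_p^{\varphi}$ is finite and continuous — in particular that $\tilde K_t$ cannot degenerate to a lower-dimensional convex set, along which $\mathcal{A}_p^{\varphi}$ could blow up for $p<0$. This is precisely what Theorem~\ref{thm: uniform lower bound on the support function} and Corollary~\ref{cor: uniform lower and upper on support functions} supply: the entropy points stay at the origin (Lemma~\ref{lem: 8}), $K_t$ stays in a fixed entropy class by monotonicity (Lemma~\ref{lem: monotonicity}), and hence $0<r\le h_{\tilde K_t}\le R<\infty$, which rules out degeneration and bounds $\mathcal{A}_p^{\varphi}(\tilde K_t)$ from above. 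Once this uniform bound is in hand, the rest is the elementary algebra of rearranging the defining inequality and letting $V(K_t)\to\infty$.
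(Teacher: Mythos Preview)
Your plan is sound for $p\neq -n$ and is close in spirit to the paper's argument: both routes rest on Corollary~\ref{cor: uniform lower and upper on support functions} to control the normalized bodies, and then on $\max h_{K_t}\to\infty$. The paper argues slightly more directly --- from $r\le h_{\tilde K_t}\le R$ it extracts $\min h_{K_t}/\max h_{K_t}\ge r/R$, hence $\int h_{K_t}^p/\varphi\,d\sigma$ is comparable to $(\max h_{K_t})^p$, and then $\tfrac1p\log$ of this tends to $+\infty$ --- whereas you pass through $V(K_t)\to\infty$. Both are fine. One slip: for $-n<p<0$ the integral $\int h_{K_t}^p/\varphi\,d\sigma$ tends to $0$, not to $\infty$; it is $\log$ of it that tends to $-\infty$, and division by $p<0$ gives the desired $+\infty$.

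The genuine gap is at $p=-n$ (Theorem~\ref{thm: 1}, and the case $p=-2$ of Theorem~A1 in the plane). You invoke Theorem~\ref{thm: uniform lower bound on the support function} and Corollary~\ref{cor: uniform lower and upper on support functions} to bound $\mathcal{A}_p^{\varphi}(\tilde K_t)$, but neither result covers $p=-n$: Lemma~\ref{lem: bound diam} and Theorem~\ref{thm: uniform lower bound on the support function} are stated only for $p\in(-n,\infty)$, and Corollary~\ref{cor: uniform lower and upper on support functions} explicitly excludes Theorem~\ref{thm: 1} and the case $p=-2$ of A1. So you have no uniform $r\le h_{\tilde K_t}\le R$, and hence no a~priori bound on $\mathcal{A}_{-n}(\tilde K_t)$, from your argument. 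The paper closes this gap by exploiting the extra symmetry available only at $p=-n$: the quantity $V(K)\int h_K^{-n}\,d\sigma$ is $GL(n)$-invariant (not merely scaling-invariant), so one may replace $K_t$ by $l_tK_t$ with $l_t\in GL(n)$ chosen (via John's ellipsoid) so that $V(l_tK_t)=\omega_n$ and $l_tK_t-p_t$ has uniformly bounded support function for some $p_t$; this yields a finite upper bound on $\lim_{t\to T}\mathcal{A}_{-n}(K_t)$, after which your step ``$V(K_t)\to\infty$ forces $\int h_{K_t}^{-n}\,d\sigma\to 0$'' does apply. You should add this $GL(n)$-normalization step for $p=-n$.
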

\begin{proof}
We prove the claim for $p\ne 0$ and $\varphi\equiv1$. First, we consider $p\ne -n.$
Corollary \ref{cor: uniform lower and upper on support functions} shows that $0<r\leq h_{\tilde{K}_t}\leq R<\infty$. Thus $\frac{\min h_{K_t}}{\max h_{K_t}}\geq \frac{r}{R}\Rightarrow\frac{1}{p}\log \int_{\mathbb{S}^{n-1}}h_{K_t}^pd\sigma\geq \frac{1}{p}\log \int_{\mathbb{S}^{n-1}}(\max h_{K_t})^pd\sigma+c.$
Since $\max h_{K_t}\to \infty$ by Proposition \ref{prop: expansion to infty} and  Lemma \ref{lem: expansion to inftya}, the claim follows.
Now we consider case $p=-n$. Since $V(K)\int_{\mathbb{S}^{n-1}}\frac{1}{h_{K}^{n}}d\sigma$ is $GL(n)$ invariant and $\mathcal{A}_{-n}(K_t)$ is monotone along the flow, we get
\begin{align*}
&\lim\limits_{t\to T}n\mathcal{A}_{-n}(K_t)\\
&=\lim\limits_{t\to T}V(K_t)\int_{\mathbb{S}^{n-1}}\frac{1}{h_{K_t}^{n}}d\sigma=\lim\limits_{t\to T}V(l_tK_t)\int_{\mathbb{S}^{n-1}}\frac{1}{h_{l_tK_t}^n}d\sigma\\
&\leq \limsup\limits_{t\to T}V(l_tK_t)\int_{\mathbb{S}^{n-1}}\frac{1}{h_{l_tK_t-p_t}^n}d\sigma.
\end{align*}
where $l_t\in GL(n)$ and $p_t\in\operatorname{int} l_tK_t.$ Note that to get this last inequality we used the fact that $\int_{\mathbb{S}^{n-1}}\frac{1}{h_{l_tK_t-p_t}^n}d\sigma$ is minimized on $l_tK_t$ only when $p_t$ is $e_{-n}(l_tK_t)=l_te_{-n}(K_t)=\vec{o}$ (see Petty \cite[Lemma 2.2]{Petty 1975} for a proof that the Santal\'{o} point mapping, $e_{-n}$, is affinely equivariant and continuous). Moreover, we can choose $l_t,~p_t$ such that $V(l_tK_t)=\omega_n$ and $0<r<h_{l_tK_t-p_t}<R<\infty$
for some universal constants $r,R.$ This ensures that the limit of $n\mathcal{A}_{-n}(K_t)$ along the flow is finite. That is, for some $c<\infty$ we have
\[V(K_t)\int_{\mathbb{S}^{n-1}}\frac{1}{h_{K_t}^{n}}d\sigma<c.\]
Now recall Proposition \ref{cor: volume to infty} that $\lim\limits_{t\to T}V(K_t)=\infty.$ Therefore,
\[\lim\limits_{t\to T}\int_{\mathbb{S}^{n-1}}\frac{1}{h_{K_t}^{n}}d\sigma=0.\]
\end{proof}
\begin{lemma}\label{lem: key}Under the assumptions of Theorems \ref{thm: 1}, A1, A2, and A3 we have for a subsequence of $\{K_t\}$ that
\[\lim_{{t_i}\to T}\frac{V_1(K_{t_i},\Lambda_p^{\varphi}K_{t_i})}{V(\Lambda_p^{\varphi}K_{t_i})}=1,~\lim_{{t_i}\to T}\frac{V_1(K_{t_i},\Lambda_pK_{t_i})}{V(\Lambda_p^{\varphi}K_{t_i})}=1.\]
\end{lemma}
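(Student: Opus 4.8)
The plan is to run a borderline-integrability argument on the monotone entropy. By Lemma \ref{lem: monotonicity} the functional $t\mapsto\frac{\log\mathcal{B}_p^{\varphi}(K_t)}{1-p}$ (and, when $\varphi\equiv1$ or $K_t$ is origin-symmetric, also $\frac{\log\mathcal{B}_p(K_t)}{1-p}$) is nondecreasing. I would first show it is bounded above, then combine the differential identity behind (\ref{eq: key asymp}) with Lemma \ref{lem: large ent} to force the Minkowski defect $\frac{V_1(K_t,\Lambda_p^{\varphi}K_t)}{V(\Lambda_p^{\varphi}K_t)}-1\ge0$ (nonnegativity is (\ref{ie: Minkowski phi})) to tend to $0$ along some $t_i\to T$.

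For the upper bound I would distinguish two regimes. Since $\mathcal{B}_p^{\varphi}$ is scale invariant, $\mathcal{B}_p^{\varphi}(K_t)=\mathcal{B}_p^{\varphi}(\tilde{K}_t)$; under the hypotheses of Theorems A1 (with $p\neq-2$), A2 and A3, Corollary \ref{cor: uniform lower and upper on support functions} gives $0<r\le h_{\tilde{K}_t}\le R<\infty$, so $\{\tilde{K}_t\}$ stays in a Blaschke-compact family on which $\mathcal{A}_p^{\varphi}$ is continuous (Theorem \ref{thm: cont ent func}) and $K\mapsto\Lambda_p^{\varphi}K$ is continuous with $V(\Lambda_p^{\varphi}K)$ bounded below (Theorem \ref{thm: converg curvature image} together with the Cheng--Yau bounds used in its proof); hence $\mathcal{B}_p^{\varphi}(K_t)$ is bounded. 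In the remaining cases $p=-n$ (Theorem \ref{thm: 1}, $n\ge3$) and $p=-2$ ($n=2$), the rescaled support functions need not be uniformly controlled, but $\mathcal{B}_{-n}$ is $GL(n)$-invariant: $\Lambda_{-n}$ is affinely covariant and $\mathcal{A}_{-n}(K)=nV(K)V(K^{\ast})$, with $\ast$ taken at the Santal\'{o} point, is affinely invariant. Replacing $K_t$ by $\ell_tK_t$ with $\ell_t\in GL(n)$ putting $K_t$ in John position keeps $\mathcal{B}_{-n}(K_t)=\mathcal{B}_{-n}(\ell_tK_t)$ with $\{\ell_tK_t\}$ again Blaschke-compact, and boundedness follows as before; this is the mechanism already used for $p=-n$ in the proof of Lemma \ref{lem: large ent}.

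Then comes the dichotomy. From (\ref{eq: key asymp}) and its counterpart for $\mathcal{B}_p^{\varphi}$,
\[
\frac{d}{dt}\,\frac{\log\mathcal{B}_p^{\varphi}(K_t)}{1-p}=\frac{n^2V(K_t)}{\int_{\mathbb{S}^{n-1}}\varphi^{-1}h_{K_t}^p\,d\sigma}\left(\frac{V_1(K_t,\Lambda_p^{\varphi}K_t)}{V(\Lambda_p^{\varphi}K_t)}-1\right)=:c(t)\,e(t)\ge0,
\]
with $e(t)\ge0$, so boundedness from the previous step gives $\int_0^Tc(t)e(t)\,dt<\infty$. On the other hand, differentiating and using $\partial_th_{K_t}=\varphi h_{K_t}^{2-p}/\mathcal{K}$ and $\int_{\mathbb{S}^{n-1}}h_{K_t}\mathcal{K}^{-1}d\sigma=nV(K_t)$ yields $\frac{d}{dt}\int_{\mathbb{S}^{n-1}}\varphi^{-1}h_{K_t}^p\,d\sigma=pn\,V(K_t)$, whence $c(t)=\frac{n}{p}\frac{d}{dt}\log\int_{\mathbb{S}^{n-1}}\varphi^{-1}h_{K_t}^p\,d\sigma$, and Lemma \ref{lem: large ent} gives $\int_0^Tc(t)\,dt=\infty$. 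A nonnegative $c$ with $\int_0^Tc=\infty$ and $\int_0^Tce<\infty$ forces $\liminf_{t\to T}e(t)=0$, which supplies $t_i\to T$ with $\frac{V_1(K_{t_i},\Lambda_p^{\varphi}K_{t_i})}{V(\Lambda_p^{\varphi}K_{t_i})}\to1$ (for $p=0$ one uses $\log h_{K_t}$ in place of $h_{K_t}^p$). Feeding this into Minkowski's inequality $V_1(K,\Lambda_p^{\varphi}K)^n\ge V(K)^{n-1}V(\Lambda_p^{\varphi}K)$ squeezes $1\le\frac{V(K_{t_i})}{V(\Lambda_p^{\varphi}K_{t_i})}\le\frac{V_1(K_{t_i},\Lambda_p^{\varphi}K_{t_i})}{V(\Lambda_p^{\varphi}K_{t_i})}\to1$, so $\frac{V(K_{t_i})}{V(\Lambda_p^{\varphi}K_{t_i})}\to1$. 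Running the same argument verbatim with $\mathcal{B}_p$ in place of $\mathcal{B}_p^{\varphi}$ (valid whenever $e_p(K_t)=\vec{o}$, in particular when $\varphi\equiv1$ or $K_t$ is origin-symmetric) and passing to a common subsequence gives both $\frac{V_1(K_{t_i},\Lambda_pK_{t_i})}{V(\Lambda_pK_{t_i})}\to1$ and $\frac{V(K_{t_i})}{V(\Lambda_pK_{t_i})}\to1$, and the second asserted limit follows by multiplying $\frac{V_1(K_{t_i},\Lambda_pK_{t_i})}{V(\Lambda_pK_{t_i})}\cdot\frac{V(\Lambda_pK_{t_i})}{V(K_{t_i})}\cdot\frac{V(K_{t_i})}{V(\Lambda_p^{\varphi}K_{t_i})}$.

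The step I expect to cost the most is the uniform upper bound on the entropy: for the genuinely non-symmetric and $C^0$-uncontrolled case $p=-n$ one must pass to an affine image of $K_t$ and work in John position, checking in particular that $V(\Lambda_{-n}K)$ stays bounded below along $\{\ell_tK_t\}$. The other point to handle with care is that the divergence $\int_0^Tc\,dt=\infty$ is exactly the content of Lemma \ref{lem: large ent}, and it is precisely this divergence --- not mere finiteness of $\int_0^Tce\,dt$ --- that legitimizes the subsequence extraction even though $T<\infty$.
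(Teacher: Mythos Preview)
Your approach is correct and is essentially the paper's own argument, just recast in direct form rather than by contradiction. The paper assumes $\frac{V_1(K_t,\Lambda_p^{\varphi}K_t)}{V(\Lambda_p^{\varphi}K_t)}-1\ge\varepsilon$ for all large $t$, integrates the identity behind (\ref{eq: key asymp}) to obtain $\frac{\log\mathcal{B}_p^{\varphi}(K_t)}{1-p}\to\infty$, and derives a contradiction from exactly the compactness/boundedness argument you outline (scale invariance plus Corollary \ref{cor: uniform lower and upper on support functions} and Theorem \ref{thm: converg curvature image} when $p\neq-n$, $GL(n)$-invariance plus John position when $p=-n$); your version packages the same computation as $\int_0^T c=\infty$, $\int_0^T ce<\infty$, hence $\liminf e=0$.

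One remark: the paper's proof addresses only the first displayed limit. The second limit as printed mixes $\Lambda_p$ in the numerator with $\Lambda_p^{\varphi}$ in the denominator and should be read as the $\varphi\equiv1$ variant (so $\Lambda_p=\Lambda_p^{\varphi}$), which is the only situation in Theorems \ref{thm: 1} and A1 anyway. Your additional step --- running the $\mathcal{B}_p$ argument, extracting a common subsequence, and using Minkowski's inequality to squeeze the volume ratios --- is a nice bonus, but note that under the hypotheses of Theorem A3 one has only $e_p^{\varphi}(K_t)=\vec{o}$, not $e_p(K_t)=\vec{o}$, so $\Lambda_pK_t$ is not defined there; this is consistent with your own caveat ``valid whenever $e_p(K_t)=\vec{o}$''.
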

\begin{proof}
We prove the claim for $p\ne 0$. Suppose, on the contrary, that there exist $\varepsilon>0$ and $t_0>0,$  such that for any $t>t_0$ we have $\frac{V_1(K_t,\Lambda_p^{\varphi}K_t)}{V(\Lambda_p^{\varphi}K_t)}-1\ge \varepsilon.$ From (\ref{eq: key asymp}) it follows that
\begin{equation}\label{eq: contradiction}
\frac{d}{dt}\frac{\log\mathcal{B}_p^{\varphi}(K_t)}{1-p}\geq n\varepsilon\frac{d}{dt}\left(\frac{1}{p}\log \int_{\mathbb{S}^{n-1}}\frac{h_{K_t}^p}{\varphi}d\sigma\right).
\end{equation}
On the other hand, by Lemma \ref{lem: large ent}, $\frac{1}{p}\log \int_{\mathbb{S}^{n-1}}\frac{h_{K_t}^p}{\varphi}d\sigma$ can be made arbitrarily large if $t$ is close enough to $T$. By integrating both sides of (\ref{eq: contradiction}) on $[t_0,s)$ and then sending $s\to T$ we get
\[
\lim_{t\to T}\frac{\log\mathcal{B}_p^{\varphi}(K_{t})}{1-p}=\infty.
\]
This is a contradiction in view of Theorem \ref{thm: converg curvature image}: First, note that $\mathcal{B}_p^{\varphi}$ is scaling-invariant. Second, when $p\ne -n$, Corollary \ref{cor: uniform lower and upper on support functions} shows that $0<r\leq h_{\tilde{K}_t}\leq R<\infty$.
Therefore, by Blaschke's selection theorem, there is a subsequence, $\{\tilde{K}_{t_i}\}$, that converges in the Hausdorff distance to a limiting shape, say $\tilde{K}_{\infty}.$ Taking Theorem \ref{thm: converg curvature image} into consideration, we conclude that $\{\Lambda_p^{\varphi}\tilde{K}_{t_i}\}$ converges to $\Lambda_p^{\varphi}K_{\infty}.$ Consequently,
\[\infty \leftarrow \frac{\log\mathcal{B}_p^{\varphi}(K_{t_i})}{1-p}=\frac{\log\mathcal{B}_p^{\varphi}(\tilde{K}_{t_i})}{1-p}\to \frac{\log\mathcal{B}_p^{\varphi}(\tilde{K}_{\infty})}{1-p}.\]
Since $0<r\leq h_{\tilde{K}_{\infty}}\leq R<\infty,$ we must have $0<\mathcal{B}_p^{\varphi}(\tilde{K}_{\infty})<\infty$. When $p=-n,$ note that $\frac{\log\mathcal{B}_p(K_{t})}{1-p}$ is $GL(n)$-invariant. Therefore, we may assume that $\{d(l_t\tilde{K}_t)\}$ is uniformly bounded for suitable choices of $l_t\in SL(n).$ We may now continue the previous argument for $p\ne -n,~\varphi\equiv 1$ to reach a contradiction.
\end{proof}
\section{Proofs of Theorems A1, A2, A3 with convergence in the $ C^1$-topology and Proofs of Theorems \ref{thm: 1}, \ref{thm: 3}}\label{sec: a1a2a3}
Proofs given in this section rely heavily on entropy functionals $B_p^{\varphi}.$ In this section we will not give the proof of Theorem \ref{thm: 3a}, since Lemma \ref{lem: key}, in view of $\frac{d}{dt}B_1^{\varphi}\equiv0$, is not useful when $p=1.$ See Section \ref{sec: before application} for the proof.

\begin{fact}\label{thm: key reg}
Let $\varphi$ be a positive function on the unit sphere of class $ C^{k,\alpha}$, where $k$ is a non-negative integer and $0<\alpha<1$. Assume $K\in \mathcal{K}_0^n$ has a positive continuous curvature function such that $f_Kh_K^{1-p}=\varphi$. Then $K$ is of class $ C^{k+2,\alpha}_{+}.$
\end{fact}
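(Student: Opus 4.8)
The statement is a regularity (bootstrapping) result for the Monge--Ampère type equation defining the curvature function, so the plan is to rewrite the hypothesis as a PDE for the support function and apply elliptic Schauder theory iteratively. Writing $h=h_K$, the hypothesis $f_K h_K^{1-p}=\varphi$ together with the identity $f_K=\det_{\bar g}[\bar\nabla_i\bar\nabla_j h+\bar g_{ij}h]$ says that $h$ solves the fully nonlinear elliptic equation
\[
\det{}_{\bar g}\bigl[\bar\nabla_i\bar\nabla_j h+\bar g_{ij}h\bigr]=\frac{\varphi}{h^{1-p}}
\]
on $\mathbb{S}^{n-1}$. First I would record that, since $K\in\mathcal{K}_0^n$ has a \emph{positive continuous} curvature function, the body is automatically $C^{1,1}_+$ (strict convexity plus a priori bounds $0<r\le h\le R$ coming from $K\in\mathcal{K}^n_0$, and uniform positive bounds on the radii of curvature from the positivity and continuity of $f_K$ together with the fixed volume/diameter); in particular the right-hand side $\varphi h^{p-1}$ is a well-defined positive function bounded between two positive constants, and the linearized operator $(S_{n-1})'_{kl}\bar\nabla_k\bar\nabla_l+(\text{lower order})$ is uniformly elliptic with ellipticity constants controlled by these bounds.

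The core of the argument is then a standard Schauder bootstrap. Start from $h\in C^{1,1}\subset C^{1,\alpha}$ for every $\alpha<1$; then the right-hand side $\varphi h^{p-1}$ lies in $C^{0,\alpha}$ (using $\varphi\in C^{k,\alpha}$ with $k\ge0$ and $h\in C^{1,\alpha}$, and that $t\mapsto t^{p-1}$ is smooth on the compact positive interval $[r,R]$). By the fully nonlinear Schauder / Evans--Krylov-type estimates for uniformly elliptic Monge--Ampère equations on the sphere (or equivalently: the equation is concave in the Hessian, so one gets an interior $C^{2,\alpha}$ estimate), $h\in C^{2,\alpha}$. Now differentiate the equation: $\bar\nabla h$ satisfies a linear uniformly elliptic equation with coefficients built from $h$ (hence in $C^{0,\alpha}$ once $h\in C^{2,\alpha}$) and inhomogeneous term involving $\bar\nabla(\varphi h^{p-1})\in C^{0,\alpha}$ provided $k\ge1$; linear Schauder gives $\bar\nabla h\in C^{2,\alpha}$, i.e. $h\in C^{3,\alpha}$. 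Iterating this $k$ more times — at each stage the newly available regularity of $h$ makes the coefficients smooth enough and $\varphi\in C^{k,\alpha}$ supplies the needed regularity of the inhomogeneous term — yields $h\in C^{k+2,\alpha}$. Finally, $h\in C^{k+2,\alpha}$ with uniformly positive definite $[\bar\nabla_i\bar\nabla_j h+\bar g_{ij}h]$ is exactly the statement that $\partial K$ is a $C^{k+2,\alpha}$ hypersurface with a $C^{k+1,\alpha}$-diffeomorphism Gauss map and positive Gauss curvature, i.e. $K\in C^{k+2,\alpha}_+$; I would cite Cheng--Yau \cite{ChYau} (as is already done in Remark \ref{rem: rem}) for this passage and for the base regularity, since they prove precisely such estimates for the Minkowski problem without assuming a priori smoothness.

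The main obstacle, and the only place that needs genuine care rather than bookkeeping, is the \emph{base step}: getting from ``positive continuous curvature function'' to enough regularity ($C^{1,1}$, or at least $C^{1,\alpha}$ with two-sided bounds on the radii of curvature) that the equation becomes uniformly elliptic and the iteration can start. This is where one invokes the a priori estimates of Cheng--Yau for the Minkowski problem — the point, already flagged in Remark \ref{rem: rem}, is that their Lemmas do not require the solution to be known smooth in advance, so the body $\Lambda_p^\varphi K$ (or any $K$ satisfying the hypothesis) inherits $C^{\infty}_+$ when $\varphi$ is smooth and, by the same estimates truncated at finite order, $C^{k+2,\alpha}_+$ when $\varphi\in C^{k,\alpha}$. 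Once uniform ellipticity is in hand, the remaining steps are the routine Schauder iteration sketched above and carry no real difficulty.
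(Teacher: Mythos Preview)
Your bootstrap scheme once $h\in C^{2,\alpha}$ is fine and matches the paper's induction, but the base step is not justified, and this is a genuine gap rather than bookkeeping. You assert that a positive continuous curvature function makes $K$ ``automatically $C^{1,1}_+$'' with ``uniform positive bounds on the radii of curvature''; but $f_K$ is the \emph{product} of the principal radii, so two-sided bounds on $f_K$ give no control on individual $\lambda_i$, and hence no $C^{1,1}$ bound on $h$ and no uniform ellipticity for the linearized operator. Evans--Krylov/Schauder cannot start from there. Your fallback to Cheng--Yau does not close this: their a priori estimates are derived for classical solutions, and more to the point the Minkowski data here is $f_K=\varphi h_K^{p-1}$, which depends on $h_K$ itself and is a priori only continuous, so you cannot invoke the regularity theorem for the Minkowski problem with smooth prescribed data and then appeal to uniqueness---the argument is circular.

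The paper handles exactly this point by working in the Alexandrov/viscosity sense. Since $h_K>0$ and $\varphi>0$, the generalized Gauss curvature $\mathcal{K}=h_K^{1-p}/\varphi$ is pinched between two positive constants; Caffarelli's localization theorem then gives strict convexity of $K$ (for $n\ge3$; Alexandrov's 1942 theorem for $n=3$). Strict convexity alone already forces $h_K\in C^1$, so now $f_K=\varphi h_K^{p-1}$ is $C^{\alpha}$, and Caffarelli's $C^{2,\alpha}$ interior estimate for the Monge--Amp\`ere equation (equivalently, the $C^{2,\alpha}$ regularity in the Minkowski problem with $C^{\alpha}$ data) yields $K\in C^{2,\alpha}_+$. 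From that point on your iterated Schauder argument is exactly what the paper does. So the fix is to replace your unsupported $C^{1,1}_+$ claim by the Caffarelli route: pinched generalized Gauss curvature $\Rightarrow$ strict convexity $\Rightarrow h_K\in C^1\Rightarrow$ data in $C^{\alpha}\Rightarrow K\in C^{2,\alpha}_+$, and only then bootstrap.
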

\begin{proof}
Since $h_K>0$, the Gauss curvature of $K$ in the generalized sense (in the sense of Alexandrov), and thus in the viscosity sense is pinched between two positive numbers. Hence, by Caffarelli \cite[Corollary 3]{Caffarelli 1990-1} $K$ is strictly convex (when $n=3$ this conclusion follows from Alexandrov's theorem 1942 \cite{Aleksandrov 1942}). This in turn implies that $h_K$ is $ C^1$ (cf. \cite[Corollary 1.7.3]{Schneider}), and so the right-hand side of $f_K=h_K^{p-1}\varphi$ is positive and $ C^{\alpha}$. Thus, $K$ is of class $ C^{2,\alpha}_{+};$ it follows from Caffarelli's work \cite[Theorem 4]{Caffarelli 1990-2} that, for given nonnegative integer $k$ and $0<\alpha<1$, the solution of Minkowski's problem is of class $ C^{k+2,\alpha}$ if the given Gauss curvature on the spherical image is of class $ C^{k,\alpha}$ (see also Jerison \cite[Theorem 0.7]{Jerison}). So $h_K$ is $ C^2.$ Higher order regularity up to $ C^{k+2,\alpha}_{+}$ follows by induction using \cite[Theorem 4]{Caffarelli 1990-2}.
\end{proof}
\subsection{Proofs of Theorems A1, A2, A3 with convergence in the $ C^1$-topology}\label{sec: ref to stab}
We prove the statements for $p\ne0,-2.$ We will consider the case $p=-2$ in section \ref{sec: thm1thm3}.
Consider the sequence introduced in Lemma \ref{lem: key}.
Corollary \ref{cor: uniform lower and upper on support functions} shows that a subsequence of $\{\tilde{K}_{t_i}\}$ converges in the Hausdorff distance to a limiting shape $\tilde{K}_{\infty}$ with the origin in its interior. By Theorem \ref{thm: converg curvature image} and Lemma \ref{lem: key} we conclude that
\[V_1(\tilde{K}_{\infty},\Lambda^{\varphi}_p\tilde{K}_{\infty})=V(\Lambda^{\varphi}_p\tilde{K}_{\infty})\Rightarrow \tilde{K}_{\infty}=\Lambda_p^{\varphi}\tilde{K}_{\infty}.\]
In particular, we conclude that $\lim\limits_{t_i\to T}\frac{V(\Lambda_p^{\varphi} \tilde{K}_{t_i})}{V(\tilde{K}_{t_i})}=1.$ Therefore,
$0<\lim\limits_{t_i\to T}\mathcal{A}_p^{\varphi}(\tilde{K}_{t_i})=\lim\limits_{t_i\to T}\mathcal{B}_p^{\varphi}(\tilde{K}_{t_i})<\infty.$ So
monotonicity of $\mathcal{A}_p^{\varphi}(\tilde{K}_t)$ and $\mathcal{B}_p^{\varphi}(\tilde{K}_t)$ yield $\lim\limits_{t\to T}\mathcal{A}_p^{\varphi}(\tilde{K}_{t})=\lim\limits_{t\to T}\mathcal{B}_p^{\varphi}(\tilde{K}_{t}).$ This, in turn, has two implications:
\begin{align}\label{key: lim key}
\lim\limits_{t\to T} \left(\frac{V(\Lambda_p^{\varphi} \tilde{K}_t)}{V(\tilde{K}_t)}\right)^{n-1}&=\lim\limits_{t\to T} \frac{\mathcal{A}_p^{\varphi}(\tilde{K}_t)}{\mathcal{B}_p^{\varphi}(\tilde{K}_{t})}=1,\\
\lim\limits_{t\to T}\int_{\mathbb{S}^{n-1}}\frac{(h_{\tilde{K}_t}(u)-e_p\cdot u)^p}{\varphi(u)}d\sigma&=\lim\limits_{t\to T}\left(\frac{\mathcal{A}_p^{\varphi}(\tilde{K}_t)}{V(\tilde{K}_t)}\right)^{-\frac{p}{n}}=\frac{\omega_n}{c}>0.\label{key: limkey2}
\end{align}
Corollary \ref{cor: uniform lower and upper on support functions} and equalities (\ref{key: lim key}) and (\ref{key: limkey2}) imply that every given subsequence of $\{\tilde{K}_t\}$ has a convergent subsequence such that its limit satisfies $$V(L)=V(\Lambda_p^{\varphi}L)\Rightarrow L=\Lambda_p^{\varphi}L,$$ and thus $L$ is a solution of
$${\varphi}h_{L}^{1-p}dS_{L}=\frac{\omega_n}{\lim\limits_{t\to T}\int_{\mathbb{S}^{n-1}}\frac{(h_{\tilde{K}_t}(u)-e_p\cdot u)^p}{\varphi(u)}d\sigma}d\sigma=cd\sigma.$$ Fact \ref{thm: key reg} implies that $L\in \mathcal{F}_0^n.$ The $ C^{1}$-convergence, which is purely geometric and does not depend on the evolution equation, follows from \cite[Lemma 13]{Andrews 1997}. Finally, when $p\geq1$ and $ p\neq n$, in view of the uniqueness theorem of Lutwak \cite[Corollary 2.3, $i$=0]{Lutwa 1993}, there is only one solution to ${\varphi}h_L^{1-p}dS_L=cd\sigma$ in $\mathbb{R}^n$ with volume $\omega_n$. In particular, if $\varphi\equiv1$, then $L$ must be the unit ball. When $p=n$ the uniqueness follows from \cite[Theorem B]{Chou Wang}; therefore, if $\varphi\equiv1$ and $V(L)=\omega_n$, then $L$ must be the unit ball. In $\mathbb{R}^2$ a classification result of Andrews \cite{Andrews 2003} states that for $-2<p<1,~\varphi\equiv1$ the only solution with area $\pi$ is the unit disk.
\subsection{Proofs of Theorems \ref{thm: 1}, \ref{thm: 3}}\label{sec: thm1thm3}
A convex body $K$ has its centroid at the origin if and only if $K^{\ast}$ has its Santal\'{o} point at the origin \cite[p. 546]{Schneider}. Thus, in view of Lemma \ref{lem: ev polar}, there is a one-one correspondence between solutions of (\ref{eq: flow4}) for $p=-n$ and solutions of (\ref{eq: flow3}). In view of John's ellipsoid lemma, there exist $l_{t_i}\in SL(n),$ such that $d(l_{t_i}\tilde{K}_{t_i})\le 2n.$ Since $e_{-n}(l_{t_i}\tilde{K}_{t_i})=\vec{o},$ by the Blaschke selection theorem and by Theorem \ref{thm: cont entropy points} we can show that, after passing to a subsequence, $l_{t_i}\tilde{K}_{t_i}\to \tilde{K}_{\infty}\in\mathcal{K}^n_0.$ Moreover, arguing as in Section \ref{sec: ref to stab}, $\tilde{K}_{\infty}$ is a weak solution of $h_K^{1+n}dS_K=cd\sigma$. It was proved by Philippis and Marini \cite{Philippis Marini}, and Schneider \cite[Theorem 10.5.1]{Schneider} that the origin-centered ellipsoids are the only solutions of $h_K^{1+n}dS_K=cd\sigma$ (this also follows from Fact \ref{thm: key reg} and the classical theorem of Pogorelov \cite[Theorem 4.3.1]{Gutierez})\footnote{In \cite[Lemma 8.1, Lemma 8.7]{Petty}, Petty proves that the origin-centered ellipsoids are the only solutions of $h_K^{1+n}dS_K=cd\sigma,$ provided either $n=2$, or $n\geq3$ and $K$ is rotationally symmetric.}. Therefore, by Theorem \ref{thm: cont ent func}, we get $\mathcal{A}_{-n}(K_{t_i})=\mathcal{A}_{-n}(l_{t_i}K_{t_i})=\mathcal{A}_{-n}(l_{t_i}\tilde{K}_{t_i})\to n\omega_n^2.$ Monotonicity of $\mathcal{A}_{-n}(K_t)$ then implies that $\lim_{t\to T}\mathcal{A}_{-n}(\tilde{K}_t)=n\omega_n^2.$ Consequently, for any arbitrary $\varepsilon>0$, we can choose a $t_{\varepsilon}$ large enough that $V(K_t)V(K_t^{\ast})=\frac{1}{n}\mathcal{A}_{-n}(K_t)\ge\omega_n^2/(1+\varepsilon)$ for all $t\in [t_\varepsilon,T).$ Choosing a small enough $\varepsilon>0$ ensures that the argument of \cite[Section 6]{Ivaki 2013} or \cite{Ivaki 2014} can be employed to prove the existence of a sequence of times $\{t_k\}$ approaching $T$ and a sequence of special linear transformations $l_{t_k}$ such that $\{l_{t_k}\tilde{K}_{t_k}\}$ converges in $C^{\infty}$ to the unit ball. The stronger convergence statements in the theorems follow from \cite[Proposition 9.2.4]{Lu} by considering the linearization of the evolution equations about the space of origin-symmetric ellipsoids.
\section{Convergence in the $ C^{\infty}$-topology in Theorems \ref{thm: 3a}, A1, A2, A3}\label{sec: before application}
In this section we prove the $C^{\infty}$ convergence in Theorems \ref{thm: 3a}, A1, A2, and A3. Since we discussed the case $p=-2$ in Section \ref{sec: thm1thm3}, here we mainly focus on $p>-2.$ To prove the $ C^{\infty}$ convergence, we will only need to obtain a uniform upper bound for the Gauss curvature of the normalized solution. In fact, in Corollary \ref{cor: uniform lower and upper on support functions} we have established the first order regularity estimate $r\leq h_{\tilde{K}_t}\leq R$. Moreover, the lower bound on the Gauss curvature given in Lemma \ref{lem: upper}, $\mathcal{K}\geq 1/(a+b t^{-\frac{n-1}{n}})$, is independent of the initial data on $[t_0/2,t_0];$ it is quite standard that such a bound yields a uniform lower bound on the Gauss curvature of the normalized solution. Once we are equipped with uniform $C^k$ estimates for $\{\tilde{K}_t\}$, we can use the monotonicity of functionals $\mathcal{A}_p^{\varphi}$ to prove that there exists a sequence of times $\{t_k\}$ approaching $T$ for which $\{\tilde{K}_{t_k}\}$ converges in $C^{\infty}$ to a solution of (\ref{def: self similar}). The stronger convergence statement in Theorem \ref{thm: 3a} follows from the uniqueness of the self-similar solution in the class of origin-symmetric convex bodies.
\begin{lemma}\label{lem: more ev for reg}
The following evolution equations hold along the flow (\ref{eq: flow4}):
\begin{align*}
\partial_t |F|^{1+n-p}&=(F\cdot \nu)^{2-p}\varphi \frac{\mathcal{\dot{K}}^{ij}}{\mathcal{K}^2}\nabla_i\nabla_j|F|^{1+n-p}\\
&+\varphi n(1+n-p)\frac{(F\cdot \nu)^{3-p}}{\mathcal{K}}|F|^{-1+n-p}\\
&-\varphi (1+n-p)(F\cdot \nu)^{2-p} \frac{\mathcal{\dot{K}}^{ij}}{\mathcal{K}^2}|F|^{-1+n-p}g_{ij}\\
&-\varphi(-1+n-p)(1+n-p)(F\cdot \nu)^{2-p} |F|^{-3+n-p}\frac{\mathcal{\dot{K}}^{ij}}{\mathcal{K}^2}(F_i\cdot F)(F_j\cdot F),
\end{align*}
and
\begin{align*}
\partial_t \left(\varphi\frac{(F\cdot\nu)^{2-p}}{\mathcal{K}}\right)=&(F\cdot \nu)^{2-p}\varphi \frac{\mathcal{\dot{K}}^{ij}}{\mathcal{K}^2}\nabla_i\nabla_j\left(\varphi\frac{(F\cdot\nu)^{2-p}}{\mathcal{K}}\right)\\
&+(F\cdot\nu)^{4-2p}\varphi^2\frac{\dot{\mathcal{K}}_j^i}{\mathcal{K}^3}w_i^kw_k^j\\
&+\varphi^2(2-p)\frac{(F\cdot\nu)^{3-2p}}{\mathcal{K}^2}\\
&-\varphi(2-p)\frac{(F\cdot\nu)^{1-p}}{\mathcal{K}}F\cdot DF\left(\nabla\left(\varphi\frac{(F\cdot\nu)^{2-p}}{\mathcal{K}}\right)\right)\\
&-\frac{(F\cdot\nu)^{2-p}}{\mathcal{K}}d\varphi\left( DF\left(\nabla\left(\varphi\frac{(F\cdot\nu)^{2-p}}{\mathcal{K}}\right)\right)\right).
\end{align*}
\end{lemma}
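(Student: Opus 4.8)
The plan is to establish both identities by direct computation: differentiate the relevant geometric quantity in time along the flow and reorganize the outcome into the displayed elliptic operator plus the lower-order terms. Write $\Phi:=\varphi(\nu)\,(F\cdot\nu)^{2-p}/\mathcal{K}$ for the normal speed, so that $\partial_t F=\Phi\,\nu$. First I would record the first-variation formulas that follow in the standard way from $\partial_t F=\Phi\nu$: with $\nabla$ the Levi-Civita connection of the induced metric $g$ and $DF$ the differential of the embedding, one has $\partial_t\nu=-DF(\nabla\Phi)$, $\partial_t g_{ij}=2\Phi w_{ij}$, $\partial_t|F|^2=2\Phi(F\cdot\nu)$, $\partial_t(F\cdot\nu)=\Phi-F\cdot DF(\nabla\Phi)$, together with the outward Weingarten relation $\nabla_iF_j=-w_{ij}\nu$ and the standard evolution of the second fundamental form $\partial_t w_{ij}=-\nabla_i\nabla_j\Phi+\Phi\,w_i^kw_{kj}$. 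Since $\mathcal{K}=\det(g^{ik}w_{kj})$, this gives $\partial_t\mathcal{K}=\dot{\mathcal{K}}^{ij}\partial_t w_{ij}-2\Phi H\mathcal{K}$ with $H=g^{ij}w_{ij}$; combining with the homogeneity identities $\dot{\mathcal{K}}^{ij}w_{ij}=(n-1)\mathcal{K}$ and $\dot{\mathcal{K}}^{ij}w_i^kw_{kj}=H\mathcal{K}$ (both immediate from $\mathcal{K}$ being the $(n-1)$st elementary symmetric function of the principal curvatures) yields the key auxiliary identity $\partial_t\mathcal{K}=-\dot{\mathcal{K}}^{ij}\nabla_i\nabla_j\Phi-\Phi\,\dot{\mathcal{K}}^{ij}w_i^kw_{kj}$.

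For the equation governing $|F|^{1+n-p}$, no evolution of $w_{ij}$ is needed: differentiating gives $\partial_t|F|^{1+n-p}=(1+n-p)|F|^{-1+n-p}\Phi(F\cdot\nu)$, while using $\nabla_i|F|^2=2\,F\cdot F_i$ and $\nabla_iF_j=-w_{ij}\nu$ one expands the Hessian as $\nabla_i\nabla_j|F|^{1+n-p}=(1+n-p)|F|^{-1+n-p}\bigl(g_{ij}-(F\cdot\nu)w_{ij}\bigr)+(1+n-p)(-1+n-p)|F|^{-3+n-p}(F\cdot F_i)(F\cdot F_j)$. Contracting this with $\varphi(F\cdot\nu)^{2-p}\mathcal{K}^{-2}\dot{\mathcal{K}}^{ij}$ reproduces the elliptic operator on the first line; the $w_{ij}$-contraction collapses via $\dot{\mathcal{K}}^{ij}w_{ij}=(n-1)\mathcal{K}$ to $-(n-1)(1+n-p)|F|^{-1+n-p}\Phi(F\cdot\nu)$, which, because of the minus sign carried in by $\nabla_iF_j=-w_{ij}\nu$, combines with the term $(1+n-p)|F|^{-1+n-p}\Phi(F\cdot\nu)$ coming from $\partial_t|F|^{1+n-p}$ to give the coefficient $n(1+n-p)$ on the second line; the $g_{ij}$-contraction and the $(F\cdot F_i)(F\cdot F_j)$-contraction produce the third and fourth lines verbatim.

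For the equation governing the speed I would apply the chain rule to $\varphi(\nu)(F\cdot\nu)^{2-p}\mathcal{K}^{-1}$, treating $\nu$, $F\cdot\nu$ and $w_{ij}$ as independent slots (so $\partial_t(F\cdot\nu)$ absorbs the joint dependence on $F$ and $\nu$). The $\mathcal{K}$-slot, fed by the auxiliary identity above, contributes $\varphi(F\cdot\nu)^{2-p}\mathcal{K}^{-2}\dot{\mathcal{K}}^{ij}\nabla_i\nabla_j\Phi$ — the first, elliptic line — and, again using $\dot{\mathcal{K}}^{ij}w_i^kw_{kj}$, the term $\varphi^2(F\cdot\nu)^{4-2p}\mathcal{K}^{-3}\dot{\mathcal{K}}_j^i w_i^kw_k^j$ — the second, reaction line. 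The $(F\cdot\nu)$-slot, fed by $\partial_t(F\cdot\nu)=\Phi-F\cdot DF(\nabla\Phi)$, splits into the pure-$\Phi$ part $\varphi^2(2-p)(F\cdot\nu)^{3-2p}\mathcal{K}^{-2}$ — the third line — and the gradient part $-\varphi(2-p)(F\cdot\nu)^{1-p}\mathcal{K}^{-1}\,F\cdot DF(\nabla\Phi)$ — the fourth line. Finally the $\nu$-slot, fed by $\partial_t\nu=-DF(\nabla\Phi)$ and by the $\nu$-dependence of $\varphi$, gives $-(F\cdot\nu)^{2-p}\mathcal{K}^{-1}\,d\varphi(DF(\nabla\Phi))$ — the fifth line. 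Collecting these five contributions gives the claimed identity.

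The computations are routine in structure but heavy in bookkeeping, and this is where the work — and the only real obstacle — lies: one must keep careful track of the sign conventions (in particular the outward relation $\nabla_iF_j=-w_{ij}\nu$, which is exactly what turns the Hessian's $w_{ij}$-term into the clean $n(1+n-p)$), of the Christoffel corrections incurred when commuting $\partial_t$ past the time-dependent covariant derivatives in the derivation of $\partial_t w_{ij}$, and — most importantly — of the explicit dependence of the speed on the position $F$ and on the normal $\nu$, which is precisely what sources the last two lines of each identity; discarding that dependence would leave a pure reaction–diffusion equation. The algebraic facts that make the stated coefficients come out are the two homogeneity identities $\dot{\mathcal{K}}^{ij}w_{ij}=(n-1)\mathcal{K}$ and $\dot{\mathcal{K}}^{ij}w_i^kw_{kj}=H\mathcal{K}$.
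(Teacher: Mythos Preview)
Your proposal is correct and follows essentially the same route as the paper's proof: both compute $\partial_t|F|^{1+n-p}$ and $\nabla_i\nabla_j|F|^{1+n-p}$ directly (using $\nabla_iF_j=-w_{ij}\nu$ and $\dot{\mathcal{K}}^{ij}w_{ij}=(n-1)\mathcal{K}$) for the first identity, and for the second both invoke the standard variation formulas $\partial_t\nu=-DF(\nabla\Phi)$ and the evolution of the second fundamental form, then apply the chain rule to $\varphi(\nu)(F\cdot\nu)^{2-p}\mathcal{K}^{-1}$. The only cosmetic difference is that you first package the curvature evolution as $\partial_t\mathcal{K}=-\dot{\mathcal{K}}^{ij}\nabla_i\nabla_j\Phi-\Phi\,\dot{\mathcal{K}}^{ij}w_i^kw_{kj}$ from $\partial_t w_{ij}$ and $\partial_t g_{ij}$, whereas the paper works with $\partial_t w_i^j=-\nabla_i\nabla^j\Phi-\Phi\,w_i^kw_k^j$ directly; the two are equivalent once the metric variation $\partial_t g_{ij}=2\Phi w_{ij}$ is absorbed.
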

\begin{proof} We calculate
\begin{align*}
&-(F\cdot \nu)^{2-p}\varphi \frac{\mathcal{\dot{K}}^{ij}}{\mathcal{K}^2}\nabla_i\nabla_j|F|^{1+n-p}=\\
&+\varphi (n-1)(1+n-p)\frac{(F\cdot \nu)^{3-p}}{\mathcal{K}}|F|^{-1+n-p}\\
&-\varphi(1+n-p)(F\cdot \nu)^{2-p} |F|^{-1+n-p}\frac{\mathcal{\dot{K}}^{ij}}{\mathcal{K}^2}g_{ij}\\
&-\varphi(-1+n-p)(1+n-p)(F\cdot \nu)^{2-p} |F|^{-3+n-p}\frac{\mathcal{\dot{K}}^{ij}}{\mathcal{K}^2}(F_i\cdot F)(F_j\cdot F).
\end{align*}
On the other hand,
$$\partial_t |F|^{1+n-p}=\varphi(1+n-p)\frac{(F\cdot \nu)^{3-p}}{\mathcal{K}}|F|^{-1+n-p}.$$
Therefore,
\begin{align*}
\partial_t |F|^{1+n-p}&=(F\cdot \nu)^{2-p}\varphi \frac{\mathcal{\dot{K}}^{ij}}{\mathcal{K}^2}\nabla_i\nabla_j|F|^{1+n-p}\\
&+\varphi n(1+n-p)\frac{(F\cdot \nu)^{3-p}}{\mathcal{K}}|F|^{-1+n-p}\\
&-\varphi (1+n-p)(F\cdot \nu)^{2-p} \frac{\mathcal{\dot{K}}^{ij}}{\mathcal{K}^2}|F|^{-1+n-p}g_{ij}\\
&-\varphi(-1+n-p)(1+n-p)(F\cdot \nu)^{2-p} |F|^{-3+n-p}\frac{\mathcal{\dot{K}}^{ij}}{\mathcal{K}^2}(F_i\cdot F)(F_j\cdot F).
\end{align*}
To calculate the evolution equation of $\varphi\frac{(F\cdot\nu)^{2-p}}{\mathcal{K}}$, we will employ the following two evolution equations (which can be obtained with straightforward computations; for example, see Andrews \cite[Theorem 3.7]{Andrews 1994}):
\begin{align*}
\partial_t w_i^j=-\nabla_i\nabla^j\left(\varphi\frac{(F\cdot\nu)^{2-p}}{\mathcal{K}}\right)-\varphi\frac{(F\cdot\nu)^{2-p}}{\mathcal{K}}w_i^kw_k^j,
\end{align*}
\begin{align*}
\partial_t \nu=-DF\left(\nabla\left(\varphi\frac{(F\cdot\nu)^{2-p}}{\mathcal{K}}\right)\right).
\end{align*}
Therefore,
\begin{align*}
\partial_t \left(\varphi\frac{(F\cdot\nu)^{2-p}}{\mathcal{K}}\right)=&(F\cdot \nu)^{2-p}\varphi \frac{\mathcal{\dot{K}}^{ij}}{\mathcal{K}^2}\nabla_i\nabla_j\left(\varphi\frac{(F\cdot\nu)^{2-p}}{\mathcal{K}}\right)\\
&+(F\cdot\nu)^{4-2p}\varphi^2\frac{\dot{\mathcal{K}}_j^i}{\mathcal{K}^3}w_i^kw_k^j\\
&+\varphi^2(2-p)\frac{(F\cdot\nu)^{3-2p}}{\mathcal{K}^2}\\
&-\varphi(2-p)\frac{(F\cdot\nu)^{1-p}}{\mathcal{K}} F\cdot DF\left(\nabla\left(\varphi\frac{(F\cdot\nu)^{2-p}}{\mathcal{K}}\right)\right)\\
&-\frac{(F\cdot\nu)^{2-p}}{\mathcal{K}}d\varphi\left( DF\left(\nabla\left(\varphi\frac{(F\cdot\nu)^{2-p}}{\mathcal{K}}\right)\right)\right).
\end{align*}
\end{proof}
\begin{lemma}\label{lem: high reg}
Assume that $n=2$ and $-\infty<p<\infty$ or $n\geq 3$ and $p\leq n.$
Suppose there exists $0<\gamma<1$ such that solution $\{K_t\}$ to (\ref{eq: flow4}) satisfies $\gamma|F|\leq F\cdot\nu $ on $[0,t_0].$ Then there exists $\lambda>0$ (independent of $t_0$) such that $\chi(\cdot,t):=|F|^{1+n-p}(\cdot,t)-\lambda \varphi\frac{(F\cdot\nu)^{2-p}}{\mathcal{K}}(\cdot,t)$ is always negative on $[0,t_0].$
\end{lemma}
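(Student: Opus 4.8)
The plan is to run a parabolic maximum principle for $\chi$ on $M\times[0,t_0]$, based on the two evolution equations of Lemma~\ref{lem: more ev for reg}. Abbreviate the speed by $Q:=\varphi\frac{(F\cdot\nu)^{2-p}}{\mathcal{K}}$, and let $L:=\varphi(F\cdot\nu)^{2-p}\frac{\dot{\mathcal{K}}^{ij}}{\mathcal{K}^{2}}\nabla_i\nabla_j$ be the common leading operator of those equations; its coefficient matrix $\dot{\mathcal{K}}^{ij}/\mathcal{K}^{2}=(w^{-1})^{ij}/\mathcal{K}$ is positive definite because each $K_t$ is strictly convex. Subtracting $\lambda$ times the evolution of $Q$ from that of $|F|^{1+n-p}$ gives $\partial_t\chi=L\chi+(A-\lambda B)$, where $A$ denotes the sum of the three remaining terms in the first evolution equation and $B$ the sum of the four remaining terms in the second. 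The first step is to fix $\lambda$ so that $\chi(\cdot,0)<0$: since $\partial K_0$ is smooth, strictly convex and compact with the origin in its interior, $|F|$, $F\cdot\nu=h_{K_0}$, $\mathcal{K}$ and $\varphi$ all lie between positive constants on $\partial K_0$, so $\chi(\cdot,0)<0$ as soon as $\lambda$ exceeds a number determined by $K_0,n,p,\varphi$ alone; in particular this choice does not depend on $t_0$.

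Next I would argue by contradiction. If $\chi$ is not everywhere negative on $M\times[0,t_0]$, then by continuity and compactness there is a first time $t_1\in(0,t_0]$ and a point $x_1\in M$ with $\chi(x_1,t_1)=0=\max_M\chi(\cdot,t_1)$; hence $\nabla\chi=0$, $\nabla^2\chi\le0$ and $\partial_t\chi\ge0$ there, and positive definiteness of the coefficient matrix forces $L\chi\le0$ at $(x_1,t_1)$. So it is enough to prove $A-\lambda B<0$ at that point. Here I would feed in the two identities available at a contact point: $\chi=0$, i.e.\ $\mathcal{K}=\lambda\varphi(F\cdot\nu)^{2-p}|F|^{-1-n+p}$, used to eliminate $\mathcal{K}$; and $\nabla\chi=0$, i.e.\ $\nabla Q=\lambda^{-1}\nabla|F|^{1+n-p}$ together with $\nabla_i|F|^{1+n-p}=(1+n-p)|F|^{-1+n-p}(F\cdot F_i)$, used to rewrite the two first-order terms of $B$ in terms of the tangential part $F^{T}$ of $F$, whose length satisfies $|F^{T}|^{2}=|F|^{2}-(F\cdot\nu)^{2}\le(1-\gamma^{2})|F|^{2}$ by the hypothesis $\gamma|F|\le F\cdot\nu$, and in terms of $\max_{\mathbb{S}^{n-1}}|\bar{\nabla}\varphi|$.

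For $n=2$ this closes cleanly. Since $\dot{\mathcal{K}}^{ij}$ then carries no extra factor of $\mathcal{K}$, after the two substitutions direct inspection shows that every summand of $A-\lambda B$ takes the form $|F|^{5-2p}$ times a function of the single ratio $\theta:=(F\cdot\nu)/|F|\in[\gamma,1]$ and of $\lambda$. The summand coming from $\dot{\mathcal{K}}^{i}_{j}w_i^{k}w_k^{j}$ in $B$ is the $\lambda$-independent one $-\varphi\,\theta^{2-p}|F|^{5-2p}$, which is strictly negative and bounded away from zero uniformly in $\theta$ (since $\varphi\ge\min_{\mathbb{S}^{1}}\varphi>0$), while all the other summands are $O(\lambda^{-1})$ with coefficients controlled only by $p,\gamma,\min\varphi,\max\varphi,\max|\bar{\nabla}\varphi|$. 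Enlarging $\lambda$ past a further constant depending on these fixed data (and not on $t_0$, $x_1$, or the solution) therefore gives $A-\lambda B<0$, so $\partial_t\chi\le L\chi+(A-\lambda B)<0$, contradicting $\partial_t\chi\ge0$. Hence $\chi<0$ throughout $M\times[0,t_0]$.

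For $n\ge3$ with $p\le n$ the architecture is the same, but the bookkeeping is the real difficulty: $\dot{\mathcal{K}}^{ij}/\mathcal{K}^{2}=(w^{-1})^{ij}/\mathcal{K}$ now carries the principal radii $\lambda_i$, so after imposing $\chi=0$ the three terms $-\varphi(1+n-p)(F\cdot\nu)^{2-p}|F|^{-1+n-p}\frac{\dot{\mathcal{K}}^{ij}}{\mathcal{K}^{2}}g_{ij}$, $-\varphi(n-p-1)(1+n-p)(F\cdot\nu)^{2-p}|F|^{-3+n-p}\frac{\dot{\mathcal{K}}^{ij}}{\mathcal{K}^{2}}(F_i\cdot F)(F_j\cdot F)$, and $(F\cdot\nu)^{4-2p}\varphi^{2}\frac{\dot{\mathcal{K}}^{i}_{j}}{\mathcal{K}^{3}}w_i^{k}w_k^{j}$ are each only $O(\lambda^{-1})$ times an a priori unbounded curvature factor. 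The plan is to group them: the first is negative with coefficient $1+n-p\ge1$; the second is negative when $p\le n-1$ and, for $p\in(n-1,n]$, is controlled against the first and third using $(w^{-1})^{ij}(F_i\cdot F)(F_j\cdot F)\le(1-\gamma^{2})|F|^{2}\sum_i\lambda_i$; and the sign issues created by $2-p<0$ or $1+n-p>0$ in the other summands are absorbed into the $\dot{\mathcal{K}}^{i}_{j}w_i^{k}w_k^{j}$ term after invoking Newton--Maclaurin inequalities among $\{\kappa_i\}$ and the fact that $\chi=0$ makes $\mathcal{K}$ large once $\lambda$ is large. The condition $p\le n$ is exactly what keeps the relevant coefficients on the favorable side of these inequalities (for $n=2$ no restriction is needed because the curvature factors there are instead $O(\lambda^{-2})$), and it is the control of this cluster of curvature terms — not the maximum principle itself — that I expect to be the main obstacle.
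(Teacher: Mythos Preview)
Your approach is correct and coincides with the paper's: run the maximum principle on $\chi$, with the term $-\lambda(F\cdot\nu)^{4-2p}\varphi^{2}\dot{\mathcal{K}}^{i}_{j}w_{i}^{k}w_{k}^{j}/\mathcal{K}^{3}$ supplying the dominant negative contribution once $\lambda$ is large. For $n=2$ your bookkeeping (eliminate $\mathcal{K}$ via $\chi=0$ and write each summand as $|F|^{5-2p}$ times a bounded function of $\theta=(F\cdot\nu)/|F|$ and $\lambda$) is equivalent to the paper's (eliminate $|F|$ and $F\cdot\nu$ via the two-sided bound \eqref{ie: bounding norm of F} and write the right-hand side as a power of $\mathcal{K}$ times a polynomial in $\lambda$). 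For $n\ge 3$ your sketch is on the right track, but the paper disposes of the two curvature-carrying terms in $A$ more cleanly than your case-split on $p$: the pointwise Cauchy--Schwarz operator inequality $(F_{i}\cdot F)(F_{j}\cdot F)\le g_{ij}|F|^{2}$ gives, for every $p\le n$ at once, that their sum is bounded above by $-\varphi(n-p)(1+n-p)(F\cdot\nu)^{2-p}|F|^{n-p-3}\frac{\dot{\mathcal{K}}^{ij}}{\mathcal{K}^{2}}(F_{i}\cdot F)(F_{j}\cdot F)\le 0$, so both terms are simply discarded. The Newton--Maclaurin inequality you allude to is made explicit as the inverse-concavity bound $\dot{\mathcal{K}}^{i}_{j}w_{i}^{k}w_{k}^{j}\ge(n-1)\mathcal{K}^{n/(n-1)}$; with it the paper arrives at $\partial_{t}\chi\le\lambda^{(1+n-2p)/(n-1)}\mathcal{K}^{-(2n-2p+1)/(n-1)}\bigl(c+b\lambda^{1/(n-1)}-a\lambda^{2/(n-1)}\bigr)$ for constants $a,b,c>0$ depending only on $p,\gamma,\varphi$. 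One correction to your heuristic: ``$\chi=0$ makes $\mathcal{K}$ large once $\lambda$ is large'' is not the operative mechanism, since $|F|$ is itself unbounded along the flow and $\mathcal{K}$ is not controlled; what matters is that the polynomial factor in $\lambda^{1/(n-1)}$ has negative leading coefficient uniformly in $\mathcal{K}$.
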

\begin{proof}
Take $\lambda$ such that $\chi$ is negative at time $t=0.$ We will prove that $\chi$ remains negative, perhaps for a larger value of $\lambda$. We calculate the evolution equation of $\chi$ and apply the maximum principle to $\chi$ on $[0,\tau]$, where $\tau>0$ is the first time that for some $y\in\partial K_{\tau}$ we have $\chi(y,\tau)=0$. Notice that at such a point, where the maximum of $\chi$ is achieved, we have:
\begin{align*}\nabla \chi=0\Rightarrow DF\left(\nabla\left(\lambda \varphi\frac{(F\cdot\nu)^{2-p}}{\mathcal{K}}\right)\right)&= DF\left(\nabla|F|^{1+n-p}\right)\\
&=(1+n-p)|F|^{-1+n-p}F^{\top},
\end{align*}
where $F^{\top}(\cdot,t)$ is the tangential component of $F(\cdot,t)$ to $\partial K_t.$ Furthermore,
 $$(F\cdot\nu)^{2-p}\frac{\dot{\mathcal{K}}^{ij}}{\mathcal{K}^{2}}\nabla_i\nabla_j \chi\leq0,$$
 and in view of the assumption $\gamma|F|\leq F\cdot\nu\leq |F|$ we get
\begin{align}\label{ie: bounding norm of F}
\gamma^{|2-p|}\left(\frac{\lambda\varphi}{\mathcal{K}}\right)^{\frac{1}{{n-1}}}\leq|F|\leq \gamma^{-|2-p|}\left(\frac{\lambda \varphi}{\mathcal{K}}\right)^{\frac{1}{{n-1}}}.
\end{align}
Also, using $(F_i\cdot F)(F_j\cdot F)\leq g_{ij}|F|^2$ and $p\leq n$, we may calculate
\begin{align*}
-&\varphi (1+n-p)(F\cdot \nu)^{2-p} |F|^{-1+n-p}\frac{\mathcal{\dot{K}}^{ij}}{\mathcal{K}^2}g_{ij}\\
-&\varphi(-1+n-p)(1+n-p)(F\cdot \nu)^{2-p} |F|^{-3+n-p}\frac{\mathcal{\dot{K}}^{ij}}{\mathcal{K}^2}(F_i\cdot F)(F_j\cdot F)\\
\leq& -\varphi(n-p)(1+n-p)(F\cdot \nu)^{2-p} |F|^{-3+n-p}\frac{\mathcal{\dot{K}}^{ij}}{\mathcal{K}^2}(F_i\cdot F)(F_j\cdot F)\leq0.
\end{align*}
Therefore, from Lemma \ref{lem: more ev for reg} it follows that at $(y,\tau)$ we have
\begin{align*}
\partial_t \chi&\leq \varphi n(1+n-p)\frac{(F\cdot \nu)^{3-p}}{\mathcal{K}}|F|^{-1+n-p}\\
&-\lambda(F\cdot\nu)^{4-2p}\varphi^2\frac{\dot{\mathcal{K}}_j^i}{\mathcal{K}^3}w_i^kw_k^j-\lambda\varphi^2(2-p)\frac{(F\cdot\nu)^{3-2p}}{\mathcal{K}^2}\\
&+\varphi(2-p)(1+n-p)\frac{(F\cdot\nu)^{1-p}}{\mathcal{K}}|F|^{-1+n-p}|F^{\top}|^2\\
&+(1+n-p)\frac{(F\cdot\nu)^{2-p}}{\mathcal{K}}|F|^{n-p}|\bar{\nabla}\varphi|\\
&\leq \varphi n(1+n-p)\frac{(F\cdot \nu)^{3-p}}{\mathcal{K}}|F|^{-1+n-p}\\
&-\lambda\varphi^2(n-1)(F\cdot\nu)^{4-2p}\mathcal{K}^{\frac{n}{n-1}-3}-\lambda\varphi^2(2-p)\frac{(F\cdot\nu)^{3-2p}}{\mathcal{K}^2}\\
&+\varphi(2-p)(1+n-p)\frac{(F\cdot\nu)^{1-p}}{\mathcal{K}}|F|^{-1+n-p}|F^{\top}|^2\\
&+(1+n-p)\frac{(F\cdot\nu)^{2-p}}{\mathcal{K}}|F|^{n-p}|\bar{\nabla}\varphi|.
\end{align*}
Here to obtain the second inequality we used the inverse-concavity of $\mathcal{K}^{\frac{1}{n-1}}: \dot{\mathcal{K}}^i_{j}w_i^kw_k^j\geq (n-1)\mathcal{K}^{\frac{n}{n-1}}.$ Now using inequalities $\gamma|F|\leq F\cdot\nu\leq |F|$ and (\ref{ie: bounding norm of F}), we obtain
\[0\le\partial_t\chi \leq \frac{\lambda^{\frac{1+n-2p}{n-1}}}{\mathcal{K}^{\frac{2n-2p+1}{n-1}}}\left(c+b\lambda^{\frac{1}{n-1}}-a\lambda^{\frac{2}{n-1}}\right)\]
for some $a,b,c>0$ depending on $p,\gamma,\varphi.$\footnote{The $\lambda$-term with the highest exponent stems from $-\lambda(F\cdot\nu)^{4-2p}\varphi^2\frac{\dot{\mathcal{K}}_j^i}{\mathcal{K}^3}w_i^kw_k^j$ or equivalently $-\lambda\varphi^2(n-1)(F\cdot\nu)^{4-2p}\mathcal{K}^{\frac{n}{n-1}-3}.$} Therefore, taking $\lambda$ large enough proves the claim.

Next we consider the case $n=2.$ We observe that
\begin{align*}
-&\varphi(-1+n-p)(1+n-p)(F\cdot \nu)^{2-p} |F|^{-3+n-p}\frac{\mathcal{\dot{K}}^{ij}}{\mathcal{K}^2}(F_i\cdot F)(F_j\cdot F)\\
\leq&|\varphi(1-p)(3-p)|(F\cdot \nu)^{2-p}\frac{|F|^{1-p}}{\mathcal{K}^2},
\end{align*}
 Therefore,
\begin{align*}
0\leq\partial_t \chi&\leq 2(3-p)\varphi \frac{(F\cdot \nu)^{3-p}}{\mathcal{K}}|F|^{1-p}\\
&-\varphi(3-p)(F\cdot \nu)^{2-p}|F|^{1-p} \frac{1}{\mathcal{K}^2}\\
&+|\varphi(1-p)(3-p)|(F\cdot \nu)^{2-p} |F|^{1-p}\frac{1}{\mathcal{K}^2}\\
&-\lambda\varphi^2\frac{(F\cdot\nu)^{4-2p}}{\mathcal{K}}-\lambda\varphi^2(2-p)\frac{(F\cdot\nu)^{3-2p}}{\mathcal{K}^2}\\
&+\varphi(2-p)(3-p)\frac{(F\cdot\nu)^{1-p}}{\mathcal{K}}|F|^{1-p}|F^{\top}|^2\\
&+|3-p|\frac{(F\cdot\nu)^{2-p}}{\mathcal{K}}|F|^{2-p}|\bar{\nabla}\varphi|\\
&\leq \frac{\lambda^{3-2p}}{\mathcal{K}^{5-2p}}\left(c+b\lambda-a\lambda^2\right)
\end{align*}
for some $a,b,c>0$ depending on $\gamma,\varphi.$ Thus, taking $\lambda$ large enough proves the claim.
\end{proof}
The next corollary and our discussion in the beginning of this section complete our argument for deducing the $C^{\infty}$ convergence in Theorem \ref{thm: 3a}.
\begin{corollary}
Suppose $p>-2.$ Under the assumptions of Theorems \ref{thm: 3a}, A1, A2, and A3, Gauss curvature of the normalized solution is uniformly bounded above. That is,
$$\left(\frac{V(K_t)}{V(B)}\right)^{\frac{n-1}{n}}\mathcal{K}(\cdot,t)\leq C<\infty$$
for some $C$ depending only on $K_0,~p,~\varphi.$
\end{corollary}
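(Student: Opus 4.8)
\section*{Proof proposal}

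The plan is to obtain the bound directly from Lemma~\ref{lem: high reg}, whose conclusion is exactly a pointwise inequality relating $|F|$, $F\cdot\nu$ and $\mathcal{K}$, after (i) verifying its pinching hypothesis and (ii) feeding in the uniform first-order estimates of Corollary~\ref{cor: uniform lower and upper on support functions}. Since Theorems~\ref{thm: 3a}, A1, A2 and A3 all live in $\mathbb{R}^2$, the case ``$n=2$, $-\infty<p<\infty$'' of Lemma~\ref{lem: high reg} applies with no restriction on $p$ other than $p>-2$.

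First I would establish the pinching $\gamma|F|\le F\cdot\nu\le|F|$ along the unnormalized flow, for a suitable $\gamma\in(0,1)$ depending only on $K_0,p,\varphi$. The right inequality is Cauchy--Schwarz. Since $\gamma|F|\le F\cdot\nu$ is scale-invariant, it suffices to prove it for $\tilde K_t$. By Corollary~\ref{cor: uniform lower and upper on support functions}, $0<r\le h_{\tilde K_t}\le R<\infty$; the lower bound means $rB\subseteq\tilde K_t$ and the upper bound means $\tilde K_t\subseteq RB$, so every $x\in\partial\tilde K_t$ satisfies $r\le|x|\le R$ and $r\le h_{\tilde K_t}(\nu(x))=x\cdot\nu(x)\le R$. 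Hence $F\cdot\nu\ge r\ge (r/R)|F|$ on $\partial\tilde K_t$, and by scaling the same holds on $\partial K_t$ with $\gamma:=r/R$, which we may take in $(0,1)$.

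Next I would apply Lemma~\ref{lem: high reg} on each $[0,t_0]$ to get a $\lambda>0$, independent of $t_0$, with $|F|^{1+n-p}<\lambda\varphi\,(F\cdot\nu)^{2-p}/\mathcal{K}$; letting $t_0\uparrow T$, this holds on all of $[0,T)$. Rearranging gives $\mathcal{K}<\lambda\varphi\,(F\cdot\nu)^{2-p}\,|F|^{p-n-1}$, and the pinching lets me dominate $(F\cdot\nu)^{2-p}$ by $|F|^{2-p}$ (if $p\le 2$) or by $\gamma^{2-p}|F|^{2-p}$ (if $p>2$), hence by $\max\{1,\gamma^{2-p}\}\,|F|^{2-p}$ in all cases. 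Therefore $\mathcal{K}\le C_1\varphi\,|F|^{-(n-1)}$ with $C_1=C_1(p,\gamma,\lambda)$, and since $\varphi\le\max_{\mathbb{S}^{n-1}}\varphi$ we get $\mathcal{K}\le C_2\,|F|^{-(n-1)}$ with $C_2=C_2(K_0,p,\varphi)$.

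Finally I would trade the lower bound on $|F|$ for one in terms of $V(K_t)$: from $\tilde K_t=(V(B)/V(K_t))^{1/n}K_t$ and $h_{\tilde K_t}\ge r$ we obtain $h_{K_t}\ge r\,(V(K_t)/V(B))^{1/n}$, so the concentric ball of that radius lies in $K_t$ and hence $|F|\ge r\,(V(K_t)/V(B))^{1/n}$ on $\partial K_t$. Substituting, $\mathcal{K}\le C_2 r^{-(n-1)}\big(V(B)/V(K_t)\big)^{(n-1)/n}$, which is the assertion with $C=C_2 r^{-(n-1)}$. The only point needing care --- and it is minor --- is that the pinching hypothesis of Lemma~\ref{lem: high reg} genuinely follows from the volume-normalized estimates (and that one tracks the $\gamma$-powers for both signs of $2-p$); granting that, the corollary is pure algebra on top of Lemma~\ref{lem: high reg}.
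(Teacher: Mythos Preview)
Your proof is correct and follows the same approach as the paper: verify the pinching hypothesis of Lemma~\ref{lem: high reg} from Corollary~\ref{cor: uniform lower and upper on support functions}, then extract the Gauss curvature bound from the inequality $|F|^{1+n-p}<\lambda\varphi(F\cdot\nu)^{2-p}/\mathcal{K}$. The paper compresses your algebraic steps into the single observation that $|F|^{1+n-p}$ and $\varphi(F\cdot\nu)^{2-p}/\mathcal{K}$ have the same degree of homogeneity under scaling, so the inequality descends directly to a scale-invariant (hence uniform) bound on the normalized Gauss curvature; your explicit bounding of $(F\cdot\nu)^{2-p}$ by $|F|^{2-p}$ and of $|F|$ below by $r(V(K_t)/V(B))^{1/n}$ is exactly what this homogeneity remark unpacks to.
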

\begin{proof}
Corollary \ref{cor: uniform lower and upper on support functions} guarantees that the assumption of Lemma \ref{lem: high reg} is satisfied. Since the degrees of homogeneity of $|F|^{1+n-p}$ and $\lambda \varphi\frac{(F\cdot\nu)^{2-p}}{\mathcal{K}}$ are equal, we conclude the upper bound for the normalized Gauss curvature.
\end{proof}
\section{Applications}\label{sec: application}
\begin{theorem}[Lutwak \cite{Lutwak 1986}]\label{thm: Lutwak 1986} For any convex body $K$ we have
\[V(K)V((K-e_{-n})^{\ast})\leq \left(\frac{V(\Lambda_{-n}K)}{V(K)}\right)^{n-1}\omega_n^2\le \omega_n^2.\]
\end{theorem}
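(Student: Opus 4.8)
The plan is to identify the left-hand quantity with a constant multiple of the entropy $\mathcal{A}_{-n}$ and then to exploit the monotonicity of $\mathcal{B}_{-n}$ under flow (\ref{eq: flow4}) with $p=-n$, $\varphi\equiv1$ together with Theorem \ref{thm: 1}. First I would recall the elementary polar volume formula $V(L^\ast)=\frac1n\int_{\mathbb{S}^{n-1}}h_L(u)^{-n}\,d\sigma(u)$, valid whenever the origin lies in the interior of $L$; applying it to $L=K-e_{-n}(K)$ gives $\mathcal{A}_{-n}(K)=V(K)\int_{\mathbb{S}^{n-1}}(h_K(u)-e_{-n}\cdot u)^{-n}d\sigma=nV(K)V((K-e_{-n})^\ast)$. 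Unwinding Definition \ref{def: strong entropy}, the first inequality of the theorem is then equivalent to $\mathcal{B}_{-n}(K)\le n\omega_n^2$, while the second inequality is exactly $V(\Lambda_{-n}K)\le V(K)$, which is recorded in Remark \ref{rem: equality mixed minkowski}. Thus the whole statement reduces to proving $\mathcal{B}_{-n}(K)\le n\omega_n^2$ for every $K\in\mathcal{K}^n$.

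To prove this, I would note that $\mathcal{A}_{-n}$, $V$, and $K\mapsto V(\Lambda_{-n}K)$ are translation invariant, so one may assume $e_{-n}(K)=\vec o$; and since smooth, strictly convex bodies are dense in $\mathcal{K}^n$ and the maps $e_{-n}$, $\mathcal{A}_{-n}$, and (on the class $\{e_{-n}=\vec o\}$) $\Lambda_{-n}$ are continuous by Theorems \ref{thm: cont entropy points}, \ref{thm: cont ent func}, and \ref{thm: converg curvature image}, it suffices to treat $K_0\in\mathcal{F}_0^n$ with $e_{-n}(K_0)=\vec o$ (replacing an approximating $K_i\in\mathcal{F}^n$ by $K_i-e_{-n}(K_i)$, which still converges to $K$ since $e_{-n}(K_i)\to\vec o$). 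Now run flow (\ref{eq: flow4}) with $p=-n$, $\varphi\equiv1$ from $K_0$: by Lemma \ref{lem: 8} the Santal\'o point stays at the origin, so identity (\ref{eq: key asymp}) is available and Lemma \ref{lem: monotonicity} gives that $\mathcal{B}_{-n}(K_t)$ is non-decreasing; by Theorem \ref{thm: 1} the normalized bodies $\tilde K_t$ converge in $C^\infty$, hence in $d_{\mathcal{H}}$, to an origin-centered ellipsoid $E$ along some $t_i\to T$. Since $\mathcal{B}_{-n}$ is dilation invariant (from (\ref{def: p curvature}) one checks $\Lambda_{-n}(\lambda L)=\lambda\Lambda_{-n}L$ and $\mathcal{A}_{-n}(\lambda L)=\mathcal{A}_{-n}(L)$) and $e_{-n}(\tilde K_{t_i})=\vec o$, the continuity above yields $\mathcal{B}_{-n}(K_{t_i})=\mathcal{B}_{-n}(\tilde K_{t_i})\to\mathcal{B}_{-n}(E)$; by monotonicity $\mathcal{B}_{-n}(K_0)\le\lim_{t\to T}\mathcal{B}_{-n}(K_t)=\mathcal{B}_{-n}(E)$.

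It then remains to compute $\mathcal{B}_{-n}(E)$. An origin-centered ellipsoid solves (\ref{def: self similar}) with $p=-n$, $\varphi\equiv1$, i.e. $h_E^{\,n+1}dS_E=c\,d\sigma$ for a constant $c>0$; pairing this with $h_E$ and using $V(E)=\frac1n\int h_E\,dS_E$ identifies $c$ with the normalizing factor in (\ref{def: p curvature}), so that $dS_{\Lambda_{-n}E}=dS_E$ and, after the centroid normalization, $\Lambda_{-n}E=E$. Hence $\mathcal{B}_{-n}(E)=\mathcal{A}_{-n}(E)=nV(E)V(E^\ast)=n\omega_n^2$ by the equality case of the Blaschke--Santal\'o inequality, and we conclude $\mathcal{B}_{-n}(K)\le n\omega_n^2$. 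The substantive analytic input here is Theorem \ref{thm: 1}, which is already in hand; I expect the main care to be needed not in any new estimate but in the continuity/approximation chain of the second paragraph and in the verification that $\Lambda_{-n}$ fixes origin-centered ellipsoids.
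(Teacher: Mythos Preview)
Your proposal is correct and follows essentially the same route as the paper: reduce to smooth bodies with $e_{-n}=\vec o$ by translation invariance and the continuity results (Theorems \ref{thm: cont entropy points}, \ref{thm: cont ent func}, \ref{thm: converg curvature image}), then run flow (\ref{eq: flow4}) with $p=-n$, use Lemma \ref{lem: 8} and the monotonicity of $\mathcal{B}_{-n}$ from Lemma \ref{lem: monotonicity}, and conclude via Theorem \ref{thm: 1}. Your write-up is simply more explicit than the paper's, spelling out the identification $\mathcal{A}_{-n}(K)=nV(K)V((K-e_{-n})^\ast)$, the computation $\mathcal{B}_{-n}(E)=n\omega_n^2$ for origin-centered ellipsoids, and the observation that the second inequality is Remark \ref{rem: equality mixed minkowski}.
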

\begin{proof}
We may first prove the claim for $ C_+^{\infty}$ convex bodies. The general case follows from the first part of Theorem \ref{thm: converg curvature image} and a standard approximation argument \cite[Section 3.4]{Schneider}. Since the inequality is translation invariant, we may assume $e_{-n}(K)=\vec{o}.$ We employ
(\ref{eq: flow4}) with $p=-n$ and initial data $K_0:=K.$ Now the claim follows from Lemma \ref{lem: 8}, the monotonicity of $\mathcal{B}_{-n}(K_t)$ established in Lemma \ref{lem: monotonicity}, and Theorem \ref{thm: 1}.
\end{proof}
\begin{theorem}\label{thm: sharp inequlities}
For any convex body $K\in\mathcal{K}^2$ we have
\[
\begin{cases}
V(K)\left(\int_{\mathbb{S}^{1}}(h_K(u)-e_p\cdot u)^pd\sigma\right)^{-\frac{2}{p}}\geq \pi(2\pi)^{\frac{-2}{p}}\frac{V(\Lambda_pK)}{V(K)} &\mbox{if } p>1\\
V(K)\left(\int_{\mathbb{S}^{1}}(h_K(u)-e_p\cdot u)^pd\sigma\right)^{-\frac{2}{p}}\leq \pi(2\pi)^{\frac{-2}{p}}\frac{V(\Lambda_pK)}{V(K)} &\mbox{if } -2<p<1~\&~p\neq0\\
V(K)\exp\left(\frac{\int_{\mathbb{S}^{1}}-\log (h_K(u)-e_0\cdot u)d\sigma}{\pi}\right)\leq \pi\frac{V(\Lambda_0K)}{V(K)} & \mbox{if } p=0.
\end{cases}
\]
\end{theorem}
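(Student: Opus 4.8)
The plan is to recast the three displayed inequalities as a single comparison between the entropy functional $\mathcal{B}_p$ evaluated at $K$ and at the unit disk $B$, and then to deduce that comparison from the flow (\ref{eq: flow4}) with $\varphi\equiv1$. With $n=2$ one has $\mathcal{B}_p(K)=V(K)\mathcal{A}_p(K)/V(\Lambda_pK)$. For $B$ we have $e_p(B)=\vec{o}$ and $h_B\equiv1$, so $\mathcal{A}_p(B)=\pi(2\pi)^{-2/p}$ (and $\mathcal{A}_0(B)=\pi$); moreover (\ref{def: p curvature}) gives $dS_{\Lambda_pB}=d\sigma=dS_B$, hence $\Lambda_pB=B$ and $\mathcal{B}_p(B)=\mathcal{A}_p(B)$. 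Rewriting, the asserted inequalities become $\mathcal{B}_p(K)\ge\mathcal{B}_p(B)$ when $p>1$, $\mathcal{B}_p(K)\le\mathcal{B}_p(B)$ when $-2<p<1$ and $p\ne0$, and $\mathcal{B}_0(K)\le\mathcal{B}_0(B)=\pi$ when $p=0$.

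First I would reduce to $K\in\mathcal{F}^2_0$. Each inequality is invariant under translations of $K$ (since $\mathcal{A}_p$ depends only on $h_K-e_p\cdot u$, while $\Lambda_p$ and $V$ are translation invariant) and under dilations ($\mathcal{B}_p$ is scale invariant). Given an arbitrary $K\in\mathcal{K}^2$, choose $C^\infty_+$ bodies $K_i\to K$ in the Hausdorff metric (\cite[Section 3.4]{Schneider}); replacing $K_i$ by $K_i-e_p(K_i)$ and invoking continuity of $e_p$ (Theorem \ref{thm: cont entropy points}) we may assume $e_p(K_i)=\vec{o}$. Then continuity of $\mathcal{A}_p$ (Theorem \ref{thm: cont ent func}), continuity of $K\mapsto\Lambda_pK$ together with a uniform lower bound for $V(\Lambda_pK_i)$ (Theorem \ref{thm: converg curvature image}), and $V(K_i)\to V(K)$ let the inequality pass to $K$. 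So it suffices to treat $K\in\mathcal{F}^2_0$, and after a translation we may assume $e_p(K)=\vec{o}$.

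Then I would run (\ref{eq: flow4}) with $\varphi\equiv1$ and $K_0=K$ on its maximal interval $[0,T)$. By Lemma \ref{lem: 8} the entropy point stays at the origin, so Lemma \ref{lem: monotonicity} applies and $t\mapsto\frac{\log\mathcal{B}_p(K_t)}{1-p}$ is nondecreasing (for $p=0$ the nondecreasing quantity is $\log\mathcal{B}_0(K_t)$). Since $\mathcal{B}_p$ is scale invariant, $\mathcal{B}_p(K_t)=\mathcal{B}_p(\tilde{K}_t)$; by Theorem A1 (established earlier in this paper, at least along a sequence $t_i\to T$) we have $\tilde{K}_{t_i}\to B$ in the Hausdorff metric, and Hausdorff-continuity of $\mathcal{B}_p$ (Theorems \ref{thm: cont ent func} and \ref{thm: converg curvature image}) gives $\mathcal{B}_p(\tilde{K}_{t_i})\to\mathcal{B}_p(B)$. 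Because the monotone quantity has a limit as $t\to T$, this forces $\lim_{t\to T}\mathcal{B}_p(K_t)=\mathcal{B}_p(B)$. For $p>1$ we have $1-p<0$, so nondecrease of $\frac{\log\mathcal{B}_p(K_t)}{1-p}$ means $\log\mathcal{B}_p(K_t)$ is nonincreasing and hence $\mathcal{B}_p(K)\ge\mathcal{B}_p(B)$; for $-2<p<1$, $p\ne0$, the sign is reversed and $\mathcal{B}_p(K)\le\mathcal{B}_p(B)$; for $p=0$, $\mathcal{B}_0(K)\le\pi$. Unwinding $\mathcal{B}_p(K)=V(K)\mathcal{A}_p(K)/V(\Lambda_pK)$ gives the three stated inequalities.

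No new analytic estimate is needed. The two points requiring care are that the normalized flow in $\mathbb{R}^2$ genuinely converges (subsequentially is enough) to the disk for the entire range $-2<p<\infty$, $p\ne1$ --- this is Theorem A1, resting ultimately on Andrews' classification --- and that the approximation of an arbitrary convex body is legitimate, which relies only on the continuity statements for $e_p$, $\mathcal{A}_p$, and $\Lambda_p$ recalled above. I expect the only mildly delicate step to be identifying $\lim_{t\to T}\mathcal{B}_p(K_t)$ with $\mathcal{B}_p(B)$ by combining the monotonicity with the subsequential convergence.
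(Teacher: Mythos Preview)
Your proposal is correct and follows essentially the same strategy as the paper's proof, which is the one-line ``The claims follow from Lemmas \ref{lem: 8}, \ref{lem: monotonicity} and from Theorem A1.'' You have simply spelled out what the paper leaves implicit: the recasting as $\mathcal{B}_p(K)\gtrless\mathcal{B}_p(B)$, the computation $\mathcal{B}_p(B)=\pi(2\pi)^{-2/p}$, the sign analysis for $1-p$, the approximation of a general $K$ by smooth bodies, and the passage from subsequential convergence to the full limit via monotonicity.
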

\begin{proof}
The claims follow from Lemmas \ref{lem: 8}, \ref{lem: monotonicity} and from Theorem A1.
\end{proof}
As we have already mentioned, for $p>1,$ $\Lambda_p K=K$ implies that $K$ is a ball (cf. Section \ref{sec: ref to stab}). In the remaining of this section, we give a stability version of this fact in $\mathbb{R}^2$: if $\frac{V(K)}{V(\Lambda_p K)}$ is close to one, then $K$ is close to a disk in the Hausdorff distance. To this end, we first recall Urysohn's inequality. Let us denote the mean width of $K\in\mathcal{K}^n$ by $w(K)=\frac{2}{n\omega_n}\int_{\mathbb{S}^{n-1}}h_Kd\sigma.$ Urysohn's inequality states that
\[\frac{V(K)}{\left(\frac{w(K)}{2}\right)^n}\leq \omega_n,\]
and equality holds exclusively for balls. The next lemma gives a lower bound for this ratio.
\begin{lemma}\label{thm: new}
Assume that $p>1$. For $K\in\mathcal{K}^2$ we have
\[\pi\frac{V(\Lambda_p K)}{V(K)}\le\frac{V(K)}{\left(\frac{w(K)}{2}\right)^2}.\]
\end{lemma}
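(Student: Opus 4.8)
The plan is to bound $V(\Lambda_p K)$ from above by a mixed volume with the unit disk $B$, and then reduce the inequality to a one-variable correlation (Chebyshev) inequality for the support function. Since all three quantities $V(\Lambda_p K)$, $V(K)$, $w(K)$ are translation invariant — recall $e_p(K+x)=e_p(K)+x$, so $h_{(K+x)-e_p(K+x)}=h_{K-e_p(K)}$ and hence $\Lambda_p(K+x)$ is a translate of $\Lambda_p K$ — I may assume $e_p(K)=\vec{o}$ and write $h:=h_K=h_{K-e_p}$, a positive continuous function on $\mathbb{S}^1$ (positivity since $e_p(K)\in\operatorname{int}K$ by Lemma \ref{lem: 1}).

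The first step is to compute the mixed volume of $\Lambda_p K$ with $B$. Using $n=2$, $h_B\equiv 1$, and the defining formula (\ref{def: p curvature}) for $S_{\Lambda_p K}$,
\begin{equation*}
V_1(\Lambda_p K,B)=\tfrac12\int_{\mathbb{S}^1}dS_{\Lambda_p K}=\frac{V(K)\int_{\mathbb{S}^1}h^{p-1}\,d\sigma}{\int_{\mathbb{S}^1}h^{p}\,d\sigma}.
\end{equation*}
Minkowski's mixed volume inequality in the plane, $V_1(\Lambda_p K,B)^2\ge V(\Lambda_p K)\,V(B)=\pi V(\Lambda_p K)$, then yields
\begin{equation*}
\pi\,\frac{V(\Lambda_p K)}{V(K)}\le\frac{V_1(\Lambda_p K,B)^2}{V(K)}=V(K)\left(\frac{\int_{\mathbb{S}^1}h^{p-1}\,d\sigma}{\int_{\mathbb{S}^1}h^{p}\,d\sigma}\right)^{2}.
\end{equation*}

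Since $\tfrac12 w(K)=\tfrac{1}{2\pi}\int_{\mathbb{S}^1}h\,d\sigma$ for $n=2$, comparison with the last display shows that the lemma follows once we prove
\begin{equation*}
\left(\int_{\mathbb{S}^1}h^{p-1}\,d\sigma\right)\left(\int_{\mathbb{S}^1}h\,d\sigma\right)\le 2\pi\int_{\mathbb{S}^1}h^{p}\,d\sigma ,
\end{equation*}
because squaring this and multiplying by $V(K)\left(\int_{\mathbb{S}^1}h^{p}\,d\sigma\right)^{-2}$ bounds the right-hand side of the previous estimate above by $V(K)/(w(K)/2)^2$. As $2\pi=\sigma(\mathbb{S}^1)$, dividing by $(2\pi)^2$ rewrites this as $E_\mu[h^{p-1}]\,E_\mu[h]\le E_\mu[h^{p-1}h]$ with $\mu:=\tfrac{1}{2\pi}\sigma$ a probability measure. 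Because $p>1$, both $t\mapsto t^{p-1}$ and $t\mapsto t$ are nondecreasing on $(0,\infty)$, so $h^{p-1}$ and $h$ are nondecreasing functions of the common quantity $h$; integrating the pointwise inequality $\big(h(u)^{p-1}-h(v)^{p-1}\big)\big(h(u)-h(v)\big)\ge 0$ against $\mu\otimes\mu$ (Chebyshev's integral inequality) gives the claim.

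I expect no genuine obstacle here: the argument is bookkeeping with normalizations ($\omega_2=\pi$, $\sigma(\mathbb{S}^1)=2\pi$, the factors $\tfrac1n$ in the mixed volume and in (\ref{def: p curvature})), plus the one choice-driven step of comparing $\Lambda_p K$ with the unit disk — precisely the choice that produces the mean-width integral $\int_{\mathbb{S}^1}h\,d\sigma$ and exposes the Chebyshev structure. Equality propagates through the whole chain exactly when $h$ is constant, i.e.\ when $K$ is a disk, consistent with the equality case of Urysohn's inequality.
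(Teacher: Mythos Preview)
Your proof is correct and takes a genuinely different route from the paper. The paper's argument invokes Theorem~\ref{thm: sharp inequlities} (the inequality $V(K)\left(\int_{\mathbb{S}^1}h_{K-e_p}^p\,d\sigma\right)^{-2/p}\ge \pi(2\pi)^{-2/p}V(\Lambda_pK)/V(K)$ for $p>1$), which is itself a consequence of the monotonicity of $\mathcal{B}_p$ along the flow and the convergence Theorem~A1, and then applies H\"older (the power-mean inequality) to replace $\left(\int h^p\right)^{-2/p}$ by $(w/2)^{-2}$ up to the correct power of $2\pi$. By contrast, you bypass the entire flow machinery: you bound $V(\Lambda_pK)$ via Minkowski's mixed volume inequality against the unit disk, which produces $V(K)\left(\int h^{p-1}/\int h^p\right)^2$, and then close with Chebyshev's correlation inequality for the comonotone functions $h$ and $h^{p-1}$. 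Your argument is thus entirely elementary and self-contained, requiring nothing beyond the definition of $\Lambda_pK$, the planar Minkowski inequality, and Chebyshev; the paper's proof is shorter in situ because Theorem~\ref{thm: sharp inequlities} is already available, but that theorem carries the full weight of the flow analysis. Your computation of $V_1(\Lambda_pK,B)$ from~(\ref{def: p curvature}) and the reduction to $\left(\int h^{p-1}\right)\left(\int h\right)\le 2\pi\int h^p$ are both correct, and your equality discussion (both Minkowski and Chebyshev steps become equalities precisely when $h$ is constant) is a bonus the paper does not record.
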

\begin{proof}
Theorem \ref{thm: sharp inequlities} gives
\[\pi(2\pi)^{-\frac{2}{p}}\frac{V(\Lambda_pK)}{V(K)}\leq V(K)\left(\int_{\mathbb{S}^{1}}(h_K(u)-e_p\cdot u)^pd\sigma\right)^{-\frac{2}{p}}.\]
Applying the H\"{o}lder inequality to the right-hand side completes the proof.
\end{proof}
\begin{theorem}\label{thm: application}
Assume that $K\in\mathcal{K}^2$ and $p> 1.$ There exist $\gamma, \varepsilon_0>0$ with the following property. If $\frac{V(K)}{V(\Lambda_p K)}\le 1+\varepsilon$ for $\varepsilon\leq\varepsilon_0$, then there exist $x\in\operatorname{int}K$ and an origin-centered disk $\bar{B}$ such that
\[d_{\mathcal{H}}\left(\left(\frac{\pi}{V(K)}\right)^{\frac{1}{2}}(K-x), \bar{B}\right)\leq \gamma\varepsilon^{\frac{1}{3}}.\]
\end{theorem}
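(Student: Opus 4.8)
The plan is to convert the hypothesis into the statement that $K$ nearly achieves equality in Urysohn's inequality and then to run a quantitative stability argument for Urysohn's inequality in the plane. Since both $V(K)/V(\Lambda_pK)$ and the asserted estimate are invariant under dilation (recall $\Lambda_p$ is homogeneous of degree one), I may assume $V(K)=\pi$, and after translating $K$ by its Steiner point I may further assume that the first-order Fourier coefficients of $h_K$ on $\mathbb{S}^{1}$ vanish; in particular $\vec o\in\operatorname{int}K$. Write $h_K(\theta)=a_0+\sum_{k\ge2}(a_k\cos k\theta+b_k\sin k\theta)$, which is legitimate because $h_K$ is Lipschitz on $\mathbb{S}^{1}$; then $w(K)/2=a_0$, and applying Parseval to $V(K)=\tfrac12\int_{\mathbb{S}^{1}}h_K\,dS_K=\tfrac12(\|h_K\|_{L^2}^2-\|h_K'\|_{L^2}^2)$ yields the classical identity
\[\pi=V(K)=\pi a_0^2-\frac{\pi}{2}\sum_{k\ge2}(k^2-1)(a_k^2+b_k^2),\]
valid for every planar convex body since $h_K\in H^1(\mathbb{S}^{1})$ (cf.\ Hurwitz; alternatively one may approximate in the Hausdorff metric, using the continuity of $\Lambda_p$ from the first part of Theorem~\ref{thm: converg curvature image}). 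In particular $a_0^2\ge1$.

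Next I would feed the hypothesis through Lemma~\ref{thm: new}. After the normalization $V(K)=\pi$, Lemma~\ref{thm: new} reads $V(\Lambda_pK)\le\pi/a_0^2$, while the hypothesis gives $V(\Lambda_pK)\ge V(K)/(1+\varepsilon)=\pi/(1+\varepsilon)$; hence $\pi/(1+\varepsilon)\le\pi/a_0^2$, that is $a_0^2\le1+\varepsilon$. Combined with $a_0^2\ge1$, the area identity forces $\sum_{k\ge2}(k^2-1)(a_k^2+b_k^2)=2(a_0^2-1)\le2\varepsilon$, and since $k^2-1\ge3$ for $k\ge2$ this gives $\|h_K-a_0\|_{L^2(\mathbb{S}^{1})}^2=\pi\sum_{k\ge2}(a_k^2+b_k^2)\le C\varepsilon$; that is, $K$ is $O(\varepsilon^{1/2})$-close in $L^2$ to the origin-centered disk of radius $a_0$.

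Finally I would upgrade this $L^2$-closeness to Hausdorff-closeness by interpolating against an a~priori $C^1$ bound. In the plane a bounded mean width forces a bounded diameter: for any $p,q\in\partial K$ and any $u\in\mathbb{S}^{1}$ the width of $K$ in direction $u$ is at least $|u\cdot(p-q)|$, so integrating over $\mathbb{S}^{1}$ gives $d(K)\le\frac{\pi}{2}w(K)=\pi a_0\le2\pi$, whence $\|h_K'\|_\infty\le d(K)\le2\pi$ because $\vec o\in\operatorname{int}K$. Writing $g:=h_K-a_0$, one has $g$ of mean zero, $\|g'\|_\infty\le2\pi$ and $\|g\|_{L^2}\le C\varepsilon^{1/2}$; if $|g|$ attains its maximum $M$ at some point then $|g|\ge M/2$ on an arc of length $\ge M/(2\|g'\|_\infty)$, so $\|g\|_{L^2}^2\ge c\,M^3/\|g'\|_\infty$ and therefore $\|g\|_\infty\le C(\|g'\|_\infty^{1/3}\|g\|_{L^2}^{2/3}+\|g\|_{L^2})\le\gamma\varepsilon^{1/3}$. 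Taking $x$ to be the Steiner point and $\bar B$ the origin-centered disk of radius $a_0$ gives $d_{\mathcal{H}}(K-x,\bar B)=\|g\|_\infty\le\gamma\varepsilon^{1/3}$; undoing the normalization reinstates the factor $(\pi/V(K))^{1/2}$, and $\bar B$ may be replaced by the unit disk since $|a_0-1|\le2\varepsilon$. The only genuinely new ingredient here is the deficit inequality of Lemma~\ref{thm: new}; I expect the main (though essentially routine) points to be the verification of the Parseval area identity for general, possibly non-smooth, $K\in\mathcal{K}^2$ (or the accompanying approximation), and packaging the $L^2$-bound together with the crude a~priori Lipschitz estimate so as to land the stated exponent $\varepsilon^{1/3}$.
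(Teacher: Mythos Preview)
Your argument is correct and self-contained, but it proceeds along a genuinely different route from the paper's own proof. Both proofs start identically: normalize $V(K)=\pi$ and invoke Lemma~\ref{thm: new} to obtain $w(K)/2\le(1+\varepsilon)^{1/2}$. From there the paper appeals to a Bonnesen-type inequality (Schneider, Inequality~7.31) relating the mean width to the circumradius $\rho_+$, deducing $\rho_+\le(1-\varepsilon^{1/2})^{-1}$; it then encloses $K$ in the circumscribed disk $\bar B$ of radius $\rho_+$ and uses a spherical-cap volume comparison, $\pi=V(K)\le V(\bar B)-V_{\mathrm{cap}}$, to force the height $d$ of the missing cap---which controls $d_{\mathcal H}(K,\bar B)$---to satisfy $d^{3/2}\lesssim \rho_+^2-1\lesssim\varepsilon^{1/2}$, hence $d\lesssim\varepsilon^{1/3}$. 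Your proof instead exploits the Hurwitz/Fourier representation of the planar area functional: with the Steiner point at the origin the area identity $\pi=\pi a_0^2-\tfrac{\pi}{2}\sum_{k\ge2}(k^2-1)(a_k^2+b_k^2)$ converts the same width bound into $\|h_K-a_0\|_{L^2}^2\lesssim\varepsilon$, and the crude Lipschitz bound $\|h_K'\|_\infty\le d(K)\le\tfrac{\pi}{2}w(K)$ together with the elementary $L^2$--$L^\infty$ interpolation $\|g\|_\infty^3\lesssim\|g'\|_\infty\|g\|_{L^2}^2$ yields $\|h_K-a_0\|_\infty\lesssim\varepsilon^{1/3}$. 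Your approach avoids the external black-box inequality and pins down $x$ explicitly as the Steiner point; the paper's cap argument, on the other hand, is written so as to make sense in any dimension. Both methods land exactly on the exponent $\varepsilon^{1/3}$ via the same mechanism (an $O(\varepsilon^{1/2})$ deficit feeding a ``$3/2$-power'' geometric estimate), so neither improves on the other quantitatively.
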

\begin{proof}
We may assume $V(K)=\pi.$ From Lemma \ref{thm: new} we get $w\leq 2(1+\varepsilon)^{\frac{1}{2}}.$  On the other hand, by \cite[Inequality 7.31, p. 385]{Schneider}
\[\left(\frac{w}{2}\right)^{\frac{1}{2}}-1\geq \left(\frac{w}{2}-\frac{1}{\rho_+}\right)^2\geq\left(1-\frac{1}{\rho_+}\right)^2.\]
Consequently,
\[\varepsilon^{\frac{1}{2}}\ge 1-\frac{1}{\rho+} \Rightarrow \rho_+\leq \frac{1}{1-\varepsilon^{\frac{1}{2}}}.\]
Suppose the Hausdorff distance of $K$ and a ball $\bar{B}$, $K\subset \bar{B}$, of radius $\frac{1}{1-\varepsilon^{\frac{1}{n}}}$ and center $x\in\operatorname{int}K$ is $d.$ Then, the volume of the hyperspherical cap of height $d$ inside $\bar{B}$ is given by
\[V_{\operatorname{cap}} = \frac{\pi^{\frac{n-1}{2}}\, \left(\frac{1}{1-\varepsilon^{\frac{1}{n}}}\right)^{n}}{\,\Gamma \left ( \frac{n+1}{2} \right )} \int\limits_{0}^{\arccos\left(1-d(1-\varepsilon^{\frac{1}{n}})\right)}\sin^n (t) \,dt.\]
Therefore,
\begin{align*}
& \frac{\pi ^ {\frac{n-1}{2}}}{\,\Gamma \left ( \frac{n+1}{2} \right )} \int\limits_{0}^{\frac{\pi}{2}}\sin^n (t) \,dt=V(K)\\
&\leq V(\bar{B})-V_{\operatorname{cap}}\\
&= \frac{\pi ^ {\frac{n-1}{2}}\, \left(\frac{1}{1-\varepsilon^{\frac{1}{n}}}\right)^{n}}{\,\Gamma \left ( \frac{n+1}{2} \right )}\left( \int\limits_{0}^{\frac{\pi}{2}}\sin^n (t) \,dt-\gamma_n'\left(\arccos\left(1-d(1-\varepsilon^{\frac{1}{n}})\right)\right)^{n+1}+\mbox{lower~ order~ terms}\right)\\
&\leq \frac{\pi ^ {\frac{n-1}{2}}\, \left(\frac{1}{1-\varepsilon^{\frac{1}{n}}}\right)^{n}}{\,\Gamma \left ( \frac{n+1}{2} \right )}\left(\int\limits_{0}^{\frac{\pi}{2}}\sin^n (t) \,dt-\gamma_n'2^{\frac{n+1}{2}}\left(d(1-\varepsilon^{\frac{1}{n}})\right)^{\frac{n+1}{2}}\right).
\end{align*}
This proves that the Hausdorff distance of $K-x$ from $\bar{B}-x$ is bounded above by $\gamma \varepsilon^{\frac{1}{3}}$ for some universal constant $\gamma>0.$
\end{proof}
\begin{noteadded}
Theorem \ref{thm: sharp inequlities} has been extended to higher dimensions in recent work of Andrews-Guan-Ni by using the classical affine isoperimetric inequality and isoperimetric inequality; see \cite{Andrews 2015}.
\end{noteadded}
\textbf{Acknowledgment}: I would like to thank Julian Scheuer for pointing out some errors in the proof of Lemma 9.2. I would like to thank the referees for useful comments and suggestions.
The work of the author was supported by Austrian Science Fund (FWF) Project M1716-N25  and the European Research Council (ERC) Project 306445.
\bibliographystyle{amsplain}

\begin{thebibliography}{70}
\bibitem{Aleksandrov 1942} A.D. Aleksandrov, Smoothness of the convex surface of bounded Gaussian curvature, Dokl. Akad. Nauk SSSR, {36} (1942) 211--216.
\bibitem{Andrews 1994} B. Andrews, Contraction of convex hypersurfaces in Euclidean space, Calc. Var. Partial Differential Equations {2} (1994) 151--171.
\bibitem{Andrews 1996} B. Andrews, Contraction of convex hypersurfaces by their affine normal, J. Differential Geom. {43} (1996) 207--230.
\bibitem{Andrews 1997} B. Andrews, Monotone quantities and unique limits for evolving convex hypersurfaces, Int. Math. Res. Not. IMRN {1997} (1997) 1001--1031.
\bibitem{Andrews 1998} B. Andrews, Evolving convex curves, Calc. Var. Partial Differential Equations {7} (1998) 315--371.
\bibitem{Andrews Ben 2000} B. Andrews, Motion of hypersurfaces by Gauss curvature, Pacific J. Math. {195} (2000) 1--34.
\bibitem{Andrews 2003} B. Andrews, Classification of limiting shapes for isotropic curve flows, J. Amer. Math. Soc. {16} (2003) 443--459.
\bibitem{Andrews 2015} B. Andrews, P. Guan, L. Ni, Flow by the power of the Gauss curvature, available at arXive arXiv:1510.00655v1.
\bibitem{Angenent 1991} S. Angenent, On the formation of singularities in the curve shortening flow, J. Differential Geom. {33} (1991) 601--633.
\bibitem{Angenent-V} S. Angenent, J.J.L. Vel\'{a}zquez, Asymptotic shape of cusp singularities in curve shortening, Duke Math. J. {77} (1995) 71--110.
\bibitem{Bhatia 1997} R. Bhatia, Matrix analysis, Vol. 169. Springer Science \& Business Media, 1997.
\bibitem{bon} T. Bonnesen, W. Fenchel, Theorie der konvexen K\"{o}rper, Springer-Verlag, Berlin, 1934.
\bibitem{Busemann 2012} H. Busemann, Convex surfaces, No. 6. Courier Dover Publications, 2012.
\bibitem{Caffarelli 1990-1} L.A. Caffarelli, A localization property of viscosity solutions to the Monge-Amp\`{e}r equation and their strict convexity, Ann. of Math. (1990)  129--134.
\bibitem{Caffarelli 1990-2} L.A. Caffarelli, Interior $W^{2,p}$-estimate for solutions of the Monge-Amp\`{e}r equations, Ann. of Math. {131} (1990) 135--150.
\bibitem{ChYau} S.Y. Cheng, S.T Yau, On the regularity of the solution of the $n$-dimensional Minkowski problem, Comm. Pure Appl. Math. {29} (1976) 495--516.
\bibitem{Chou Wang} K.S. Chou, X.J. Wang, The $L_p$-Minkowski problem and the Minkowski problem in centroaffine geometry, Adv. in Math. {205} (2006) 33--83.
\bibitem{Chow 1985} B. Chow, Deforming convex hypersurfaces by the $n$th root of the Gaussian curvature, J. Differential Geom. {22} (1985) 117--138.
\bibitem{Bennett Chow and Robert Gulliver 1996} B. Chow, R. Gulliver, Aleksandrov reflection and nonlinear evolution equations I: The $n$-sphere and $n$-ball, Calc. Var. Partial Differential Equations {4} (1996) 249--264.
\bibitem{Chow-Tsai 1996} B. Chow, D.H. Tsai, Geometric expansion of convex plane curves, J. Differential Geom. {44} (1996) 312-330.
\bibitem{Chow-Tsai 1997} B. Chow, D.H. Tsai, Expansion of convex hypersurface by non--homogeneous functions of curvature, Asian J. Math. {1} (1997) 769--784.
\bibitem{Chow-Tsai 1998} B. Chow, D.H. Tsai, Nonhomogeneous Gauss curvature flows, Indiana Univ. Math. J. {47} (1998) 965--994.
\bibitem{Gerhardt 2014} C. Gerhardt, Non-scale-invariant inverse curvature flows in Euclidean space, Calc. Var. Partial Differential Equations {49} (2014) 471--489.
\bibitem{Guan NI 2013} P. Guan, L. Ni, Entropy and a convergence theorem for Gauss curvature flow in high dimension, J. Eur. Math. Soc. (JEMS)  (to appear) , avaiable at arXiv:1306.0625 (2013) .
\bibitem{Guggenheimer} H. Guggenheimer, Applications of polarity, Ann. Mat. Pura Appl. (4) {102} (1975) 369--383.
\bibitem{Gutierez} E.C. Gutierrez, The Monge-Amp\`{e}re equation, Progress in Nonlinear Differential Equations and their Applications. 44. Birkh\"{a}user Boston, Inc., Boston, MA, 2001.
\bibitem{Hug} D. Hug, Curvature relations and affine surface area for a general convex body and its polar, Results Math. {29} (1996) 233--248.
\bibitem{Hug2002} D. Hug, Absolute continuity for curvature measures of convex sets III, Adv. in Math. {169} (2002) 92--117.
\bibitem{Ivaki Proc} M.N. Ivaki, An application of dual convex bodies to the inverse Gauss curvature flow, Proc. Amer. Math. Soc. {143} (2015) 1257--1271.
\bibitem{Ivaki 2013} M.N. Ivaki, The planar Busemann--Petty centroid inequality and its stability, Trans. Amer. Math. Soc. (2015)  DOI: http://dx.doi.org/10.1090/tran/6503.
\bibitem{Ivaki 2014} M.N. Ivaki, Convex bodies with pinched Mahler volume under the centro--affine normal flows, Calc. Var. Partial Differential Equations 14 (2014) 1898-1911.
\bibitem{Jerison} D. Jerison, A Minkowski problem for electrostatic capacity, Acta Math. {176} (1996) 1--47.
\bibitem{Krylov-Safonov} N.V Krylov, M.V. Safonov, Certain properties of parabolic equations with measurable coefficients, Izv. Akad. Nauk SSSR Ser. Mat. {40} (1981) 161--175; English transl., Math. USSR Izv. {16}(1981) 151--164.
\bibitem{Lin-Chu-Tsai 2010} T.C. Lin, C.C. Poon, D.H. Tsai, Expanding convex immersed closed plane curves, Calc. Var. Partial Differential Equations {34} (2009) 153--178.
\bibitem{Lin-Chu-Tsai 2012} T.C. Lin, C.C. Poon, D.H. Tsai, Contracting convex immersed closed plane curves with slow speed of curvature, Trans. Amer. Math. Soc. {364} (2012) 5735--5763.
\bibitem{Lu} A. Lunardi, Analytic Semigroups and Optimal Regularity in Parabolic Problems, Birkh\"{a}user, Basel, 1995.
\bibitem{Lutwak 1986} E. Lutwak, On some affine isoperimetric inequalities, J. Differential Geom. {23} (1986) 1--13
\bibitem{Lutwa 1993} E. Lutwak, The Brunn-Minkowski-Firey theory. I. Mixed volumes and the Minkowski problem, J. Differential Geom. {38} (1993) 131--150.
\bibitem{Philippis Marini} M. Marini, G.D. Philippis, A note on Petty's Theorem,  Kodai Math. J. {37} (2014) 586--594.
\bibitem{James A. McCoy 2003} J. McCoy, The surface area preserving mean curvature flow, Asian J. Math. {7} (2003) 7--30.
\bibitem{Petty 1975} C.M. Petty, Geominimal surface area, Geometriae Dedicata {3} (1974) 77--97.
\bibitem{Petty} C.M. Petty, Affine isoperimetric problems, in: J.E. Goodman, E. Lutwak, J. Malkevitch, R. Pollack (Eds.) , Discrete Geometry and Convexity, Ann. New York Acad. Sci. 1985, pp. 113--127.
\bibitem{Poon-Tsai 2010} C.C. Poon, D.H. Tsai, Contracting convex immersed closed plane curves with fast speed of curvature, Comm. Anal. Geom.  {18} (2010) 23--75.
\bibitem{Poon-Tsai 2014} C.C. Poon, D.H. Tsai, On a nonlinear parabolic equation arising from anisotropic plane curve evolution, J. Differ. Equ. {258} (2015) 2375--2407.
\bibitem{Salowski} E. Salowski, Affine Differentialgeomtrie, Walter de Gruyter \& Co., G\"{o}schens Lehrb\"{u}cherei, Band 22, 1934.
\bibitem{Schneider} R. Schneider, Convex bodies: the Brunn-Minkowski theory, Vol. 151. Cambridge University Press, 2014.
\bibitem {Oliver 2006} O. Schn\"{u}rer, Surfaces expanding by the inverse Gauss curvature flow, J. Reine Angew. Math. {600} (2006) 117--134.
\bibitem{Alina 2012} A. Stancu, Centro-affine invariants for smooth convex bodies, Int. Math. Res. Not. IMRN {2012} (2012) 2289--2320.
\bibitem{Tsai 2004} H.D. Tsai, Blowup behavior of an equation arising from plane--curves expansion, Differential and Integral Equations {17} (2004) 849--872.
\bibitem{Tsai 2005} H.D. Tsai, Behavior of the gradient for solutions of parabolic equations on the circle, Calc. Var. Partial Differential Equations {23} (2005) 251--270.
\bibitem{Tso 1985} K. Tso, Deforming a hypersurface by its Gauss-Kronecker curvature, Comm. Pure Appl. Math. {38} (1985) 867--882.
\bibitem{Urbas} J. Urbas, An expansion of convex hypersurfaces, J. Differential Geom. {33} (1991) 91--125.
\bibitem{Oliker} O. Vladimi, U. Simon, Affine geometry and polar hypersurfaces. Analysis and geometry, 87--112, Bibliographisches Inst., Mannheim, 1992.
\bibitem{Zhu} X.P. Zhu, Lectures on mean curvature flows, Vol. 32. American Mathematical Society 2002.
\end{thebibliography}

\end{document}